\documentclass{article}

\usepackage{macros}

\title{Metric Linear Orders and O-Minimality}
\author{Aaron Anderson \\ University of Pennsylvania \and Diego Bejarano \\ York University}

\begin{document}

\maketitle

\begin{abstract}   
    In continuous logic, there are plenty of examples of interesting stable metric structures. However, on the other side of the SOP line, there are only a few metric structures where order is relevant, and orders often appear in different ways. We now present a unified approach to linear and cyclic orders in continuous logic.
    We axiomatize theories of metric linear and cyclic orders, and in the ultrametric case, find generic completions, analogous to the complete theory $\Th{DLO}$. 
    
    We then characterize which expansions of metric linear orders should be considered o-minimal in terms of regulated functions. We prove versions several key properties of o-minimal structures, such as definable completeness and distality, in the context of o-minimal metric structures. Finally we find examples of o-minimal (and weakly o-minimal) metric structures by turning to real closed metric valued fields.
\end{abstract}

\tableofcontents

\section{Introduction}

Continuous logic, as laid out in \cite{mtfms}, adapts the tools of model theory to better study analytic objects. It differs from standard (or as we will call it, \emph{discrete}) first-order logic in two ways. Firstly, it allows formulas to take values in a real interval $[0,1]$ instead of simply true and false. Secondly, it allows for equality to be replaced with a metric on the elements of a structure, with respect to which all other aspects of the structure must be continuous. These objects, \emph{metric structures}, include the full variety of structures from discrete logic as well as a growing menagerie of inherently continuous examples.

In terms of the standard classification of first-order structures, most of the examples of metric structures presented in \cite{mtfms} are \emph{stable} - informally, they display no order-like behavior \cite{schrodinger, localstab}.
Other well-studied examples tend to have been either stable or not \emph{NIP} - informally, combinatorially complicated
\cite{urysohn, rtrees, hansonIso}.
While in principle, we may expect non-discrete metric structures to exhibit all the same possibilities of behavior that discrete structures do and more, a strictly simple theory of metric structures was discovered only recently \cite{simple}. In the world of NIP but unstable structures, one may search for distal metric structures, but examples were only identified in \cite{distalexamples}.

In discrete logic, most examples of distal structures, or NIP-but-unstable structures in general, prominently feature a \emph{linear order}, but linear orders have been almost entirely absent from continuous logic. When ordered structures have been studied in continuous logic, such as ordered real closed metric valued fields \cite{mvf} or (portions of) value groups \cite{tilting}, they have been studied in ad-hoc ways, rather than with a general theory of linear orders.

This means that while strongly minimal structures, the best understood discrete structures, have an analog in the continuous logic literature \cite{hanson_smsets}, their ordered counterparts, the celebrated \emph{o-minimal} structures, do not.
This is particularly unfortunate, as the most celebrated examples of o-minimal structures are expansions of the real ordered field \cite{wilkie, r_an_exp}.
These structures have proved extremely useful in studying analytic objects (see \cite{vdd_omin} for general background and \cite{omin_gaga} for an example). These o-minimal expansions of the reals featuring analytic functions are central to powerful new applications of model theory, including the recent proof of the Andr\'e-Oort conjecture \cite{andre_oort}. In order to understand these analytic, real-valued functions using the lens of continuous logic, we must first understand linear orders in continuous logic in general, and o-minimal metric structures in particular.

\subsection{Theories of Metric Linear Orders}
In this paper, we will present several new theories whose models are metric structures imbued with a linear or cyclic order.
In order for the \emph{continuous} aspect of continuous logic to tell us something meaningful about these ``metric linear orders,'' rather than separate information about a metric and a linear order, we need some compatibility condition relating the metric and the order appropriately. We use the following convexity condition:
\begin{defn*}[\ref{meta-def}]
    We say that a set $M$ is a \emph{metric linear order} if it is assigned a linear ordering together with a metric such that each open ball in the metric is order-convex.
\end{defn*}

This compatibility condition facilitates a number of basic observations, including that the metric topology coarsens the order topology. This definition has appeared before (in the ultrametric case), in the structural Ramsey theory literature, as applied to ultrametric spaces taking values in a particular countable set, which can thus be studied in discrete logic \cite{vanthe1, vanthe2}.

To describe a linear order as a metric structure at all,
the set $\{(x,y) : x \leq y\}$ must be definable in the sense of continuous logic. As continuous logic is about real-valued predicates, a set $D$ is definable when it is closed and the distance predicate $d(x,D) = \inf_{y \in D}d(x,y)$ is definable. Thus one choice of symbol to represent $\leq$ is
$$d_\leq(x,y) = \inf_{w \leq z}d\left((x,y),(w,z)\right) = \inf_{w \leq z}\max\left(d(x,w),d(y,z)\right).$$

While we definitely want $d_\leq$ to be definable, another real-valued representation of $\leq$ is easier to work with. This is the distance \emph{to a ray}:
$$r(x,y) = d(x,(-\infty,y]) = \inf_{z \leq y}d(x,z).$$
The behavior of this ray predicate is easy to describe \emph{precisely under the compatibility condition of metric linear orders}, and using this ray predicate, we were able to axiomatize a straightforward theory $\MLO$ of metric linear orders:
\begin{thm*}[\ref{thm:mlo_models}]
    The models of $\MLO{}$ are exactly the complete, bounded metric linear orders.
\end{thm*}

The models of this theory are still very diverse, including, for instance, any linear order equipped with the discrete $\{0,1\}$-valued metric.
In order to isolate new \emph{metric} behavior that does not occur in discrete logic,
and focus on the \emph{dense} linear order case, we propose a theory $\MDLO$ of metric dense linear orders.
\begin{thm*}[Lemmas \ref{lem:top_agree}, \ref{lem:dense_ordered_distances}, \ref{lem:mdlo_dlo}]
    The models of \MDLO{} are precisely the metric linear orders $M$ such that for every $a \in M$, the sets $\{d(a,b): b < a\}$ and $\{d(a,b): b > a\}$ are both dense in $[0,1]$.

    Each such model is a dense linear order without endpoints where the metric and order topologies agree.
\end{thm*}

However, $\MDLO$ is still not precise enough to be complete - to find our first non-discrete complete theory of metric linear orders, we must place another constraint on the behavior of the metric. Motivated by trying to understand metric valued fields, we focus on the ultrametric case.
Adding the ultrametric axiom to $\MDLO$ gives us a theory $\UDLO$ of ultrametric dense linear orders, which indeed turns out to be complete.
In order to prove this, we needed to prove analogous facts about the pure metric reduct of this theory.
Thus in Section \ref{sec:ultra}, we study the metric reduct theory $\DU$ of ``dense ultrametrics'' and $\UDLO$ in parallel.
The theory $\DU$ had already been studied in \cite{conant_um}, not with an axiomatization, but as the complete theory of the ultrametric Urysohn spaces constructed in \cite{gao_shao_ultra}. These constructions were modified in \cite{vanthe1} to add a lexicographic linear ordering, inducing models of $\UDLO$.
We show, using a framework generalized to continuous logic in \cite{rtrees}, that both of these theories are model companions of more general theories, from which other useful results follow.
\begin{thm*}[Corollary \ref{cor:du_model_companion}, Theorem \ref{thm:udlo_model_companion}]
    The theory $\DU$ is complete, eliminates quantifiers, and is the model companion of the theory $\UM$ of ultrametric spaces in the empty language of metric structures.

    The theory $\UDLO$ is complete, eliminates quantifiers, and is the model companion of the theory $\ULO$ of ultrametric linear orders.
\end{thm*}

Unfortunately, unlike $\Th{DLO}$ in discrete logic, neither of these theories is separably categorical, as was observed for $\DU$ in \cite{conant_um}.
However, they are not far from that.
To recover some version of categoricity, we aim at approximate categoricity, a weaker condition that is sufficient to obtain a Ryll-Nardzewski-like result on the type space. This closely follows the framework of distortion systems laid out in \cite{hansonIso, hansonCat}, generalizing \cite{by_perturb,metricscott}.
As such, we recall many basic definitions and facts about approximate categoricity in Section \ref{sec:cat}, although we strongly recommend referring to \cite{hansonIso, hansonCat} for detailed background.
Ultimately, we do prove that they are approximately categorical - more precisely, \emph{Gromov-Hausdorff categorical} and an ordered modification of Gromov-Hausdorff categorical respectively:
\begin{thm*}[\ref{thm:DU_cat}, \ref{thm:UDLO_cat}]
    The theory \DU{} is strongly $\overline{GH}$-$\omega$-categorical, while \UDLO{} is strongly $\overline{OGH}$-$\omega$-categorical.
\end{thm*}

Motivated by understanding the projective lines of ordered metric valued fields, we also examined metric cyclic orders.
Cyclic orders have been frequently studied in model theory as reducts of linear orders \cite{mennen,tw_dp-min,ntp2}, and naturally capture the order information retained by, for instance, the real projective line $\R\mathbb{P}^1$.
Cyclic orders are also usually axiomatized in terms of the strict relation (which we denote $\cyc(x,y,z)$) analogous to $<$, while in continuous logic, definable sets must be closed. Thus we present an axiomatization of cyclic orders in discrete logic in terms of the nonstrict relation $\ceq(x,y,z) \iff \cyc(x,y,z) \vee x = y \vee y = z \vee x = z$. Once we have done this, we modify it into a metric version, analogous to metric linear orders, such that any metric linear order admits a reduct to a metric cyclic order. This can be envisioned as ``rolling up'' the linear order into a circle. Using this relation, we axiomatize a complete theory $\UDCO$ of ultrametric dense cyclic orders.
\begin{thm*}[Lemma \ref{lem:udlo_iff_udco}, Theorem \ref{thm:UDCO_complete}]
    The theory $\UDCO$ is complete, and is the theory of any cyclic reduct of $\UDLO$.
\end{thm*}

\subsection{o-Minimal Metric Structures}
With the groundwork laid for metric linear orders, we can explore what makes a metric linear order o-minimal.
In discrete logic, o-minimality is classified by all definable sets (with parameters) in dimension 1 being quantifier-free definable (with parameters) in just the order language.
In continuous logic, definable sets are of secondary importance to real-valued definable predicates, so we seek to define o-minimality in terms of all definable \emph{predicates} in dimension 1 being quantifier-free definable in just the order language $\{r(x,y)\}$.

This property, quantifier-free definability in just the order language, is usually stated in simpler terms: such a set is a finite union of intervals and points.
We propose a corresponding definition for real-valued predicates: that they be \emph{strongly regulated}.
\begin{defn*}[\ref{defn_strong_reg}]
Let $X$ be a linear order.
Let the set of \emph{strong step functions} on $X$ be the $\R$-linear span of the set of characteristic functions of intervals in $X \to \R$,
    and let the set of \emph{weak step functions} be the $\R$-linear span of the set of characteristic functions of order-convex sets.

Let $f : X \to \R$.
We say that $f$ is \emph{strongly regulated} if it is a uniform limit of strong step functions, and \emph{weakly regulated} if it is a uniform limit of weak step functions.
\end{defn*}
These are two possible generalizations to a general linear order $X$ of the definition of regulated functions in \cite{bourbaki_FVR}, where they occur in the theory of Riemann integration. (In fact, the inspiration to use regulated functions in this characterization came from \cite[Lemma 3.14]{anderson2}, where certain functions on indiscernible sequences are shown to be regulated, in order to integrate them.) The difference in these two corresponds to the difference between o-minimal and weakly o-minimal structures.

To justify the relevance of regulated functions to o-minimality, we show that they characterize definable predicates in discrete o-minimal structures, which we treat as metric structures with the $\{0,1\}$-valued metric:
\begin{thm*}[\ref{thm:discrete_omin_reg}]
    Suppose $M$ is a structure of discrete logic expanding a linear order.
    Then if $f : M \to [0,1]$ is strongly regulated, then using continuous logic, there is some definable predicate $\phi(x)$ with $\phi^M = f$.

    Conversely, if $M$ is o-minimal and $\phi(x)$ is a definable predicate, then $\phi^M$ is strongly regulated. If $M$ is weakly o-minimal and $\phi(x)$ is a definable predicate, then $\phi^M$ is weakly regulated.
\end{thm*}

We then find that in any metric linear order, quantifier-free definable predicates with parameters in dimension 1 are strongly regulated, indeed all strongly regulated functions \emph{that are continuous with respect to the metric} 
\begin{thm*}[Lemma \ref{lem:qf_reg}, Theorem \ref{thm:reg_qf}]
    Let $M$ be a metric linear order and let $f : M \to [0,1]$. Then $f$ is given by a quantifier-free definable predicate with parameters in the language $\{r\}$ if and only if $f$ is strongly regulated and continuous with respect to the metric.
\end{thm*}
We finally define o-minimality and weak o-minimality accordingly:
\begin{defn*}[\ref{defn:omin}]
    Let $\mathcal{L}$ be a language including $r$, and let $M$ be an $\mathcal{L}$-structure expanding a metric linear order. We say that $M$ is \emph{o-minimal} when either of the following two conditions holds for every definable predicate $\phi(x)$ with $|x| = 1$:
    \begin{itemize}
        \item $\phi(x)$ is equivalent to a quantifier-free definable predicate in the reduced language $\{r\}$
        \item the interpretation of $\phi(x)$ is strongly regulated.
    \end{itemize}

    We call $M$ \emph{weakly o-minimal} when the interpretation of every definable predicate $\phi(x)$ with $|x| = 1$ is weakly regulated.
\end{defn*}

We then prove versions of some classic o-minimality results
for o-minimal or weakly o-minimal expansions of appropriate theories of metric linear orders. These include a characterization of definable sets in dimension 1, including definable completeness, as well as distality of weakly o-minimal metric theories.
\begin{thm*}[Lemma \ref{lem:def_complete}, Theorem \ref{thm:omin_def_set}]
    In an o-minimal expansion $M$ of an $\MDLO$, a set $D \subseteq M$ is definable if and only if $M \setminus D$ is a countable union of disjoint open intervals, of which only finitely many have diameter $\geq \varepsilon$ for any fixed $\varepsilon > 0$.

    In particular, if $D$ is bounded above (resp. below), then $D$ has a least upper bound (resp. greatest lower bound).
\end{thm*}

\begin{thm*}[\ref{thm:distal}]
    If $T$ is a theory expanding $\MLO$ such that all models of $T$ are weakly $o$-minimal, then $T$ is distal.
\end{thm*}

Finally, we apply these definitions to metric valued fields.
We show that models of $\ORCMVF$, the projective lines of ordered real closed metric valued fields defined in \cite{mvf}, have a cyclic order reduct satisfying $\UDCO$.
If we rephrase our definition of o-minimality for cyclic orders, these are o-minimal:
\begin{thm*}[\ref{thm:ORCMVF_reg}]
    Every model of $\ORCMVF{}$ is a cyclically o-minimal expansion of $\UDCO{}$.
\end{thm*}

The formalism of metric valued fields in \cite{tilting} produces linear, rather than cyclic orders. However, these structures, which we refer to as \emph{metric valuation rings}, are more model theoretically wild.
We develop a theory of ordered real closed metric valuation rings, expanding $\UDLO$, but these are, like real closed valued fields in discrete logic, only weakly o-minimal:
\begin{thm*}[\ref{thm:ORCMVR_omin}]
    Every model of $\ORCMVR$ is weakly o-minimal, but not o-minimal.
\end{thm*}

Along the way, we observe that the value groups (or rather, value monoids-with-zero) of metric valuation rings, interpretable in the structure described in \cite{tilting}, are ultrametric linear orders.
\begin{thm*}[\ref{thm:value_ulo}]
    Suppose that $M$ is a model of 
    The imaginary $v\mathcal{O}$ is a metric linear order, and for all $x,y$, $r(v(x),v(y)) = D(x,y)$, so this metric linear order structure is interpretable. In particular, $v\mathcal{O} \vDash \ULO$.
\end{thm*}
By studying these linear orders, it may be possible to develop an Ax-Kochen-Ershov principle for metric valuation rings, analogous to the version of the principle for projective lines of metric valued fields in \cite{ake}.

\subsection*{Acknowledgements}
We would like to thank Artem Chernikov, Isaac Goldbring, C. Ward Henson, Nick Ramsey, and Tom Scanlon for advising and helpful conversations during the writing of this paper.
We would additionally like to thank Ita\"i Ben Yaacov and James Hanson for inspiration and helpful references.


\section{Continuous Logic Preliminaries}

In this section we give a concise presentation of the basics of continuous logic for readers who might be unfamiliar with the subject. The reader should be aware that this presentation provides only a cursory exposition of the basic definitions and results we will use throughout the paper. For a standard introduction to the subject, we refer the reader to \cite{mtfms}. 

\begin{defn}
A metric structure $M=(M,d;\{c_i\}_{i\in I}, \{f_j\}_{j\in J}, \{P_k\}_{k\in K})$ consists of a complete, bounded metric space $(M,d)$ of diameter $1$ augmented with an additional structure given by collection of elements $c_i\in M$, of uniformly continuous functions $f_j:M^n\to M$, and of uniformly continuous functions $P_k:M^n\to [0,1]$ (we call these functions predicates).
\end{defn}

An illustrative example is that of Banach spaces: If $B$ is the unit ball of a Banach space $X$ over $\mathbb{R}$ or $\mathbb{C}$, we can name the point $0_X\in B$, and add the norm $||\cdot||:B\to [0,1]$ as a predicate. We can also add functions $f_{\alpha,\beta}(x,y) = \alpha x + \beta y$ for each pair of scalars with $|\alpha| + |\beta| \leq 1$. In addition, if one wishes to capture the structure of a C$^*$-algebra, we can include multiplication and the $^*$-map as functions. 

\begin{rmk}
Metric structures can be defined more generally by allowing the range of the predicates to live inside arbitrary closed intervals of the reals. However, we do not require these level of generality in this article.
\end{rmk}

A model-theoretic study of metric structures can be achieved by developing a suitable logic. As in the case of first-order logic, the notions of atomic formulas, formulas, sentences, and truth in a model are defined inductively. Here we give a short explanation:
\begin{defn}
\begin{enumerate}
\item We start with a \textbf{vocabulary}: a collection of formal symbols. We allow three kinds of symbols: constants, functions, and predicates. As with first-order logic, functions and predicates have a prescribed number of arguments they take, called the \textbf{arity}, but in continuous logic the language also specifies the \textbf{modulus of continuity} for functions and predicates.  For example, $\tau=\{0_X,||\cdot||,\{f_{\alpha,\beta}\}_{|\alpha|+|\beta|\leq 1}\}$ is the vocabulary for unit balls of Banach spaces.

\item The \textbf{formulas} of continuous logic are constructed in an analogous way to the first-order setting: we apply \textbf{predicates} to \textbf{terms} to create \textbf{atomic formulas}. Then, we can combine those formulas using continuous functions of the form $u:[0,1]^n\to[0,1]$ as our \textbf{connectives}. One important example is the restricted subtraction function $x\ \dot-\ y=\max(x-y,0)$. For \textbf{quantifiers}, continuous logic uses $\inf_x$ and $\sup_x$. \textbf{Free and bound} variables are defined analogously as in first order logic, and a formula without free variables is called a \textbf{sentence}.

\item Formulas are then \textbf{uniformly continuous} functions $\phi:M^n\to [0,1]$. We usually write $\phi^M(\bar a)=r$ to mean that applying $\phi$ to the tuple $\bar a$ of elements from $M$ gives the value $r$ as a result. For example, consider the sentence $\phi := \sup_{x,y} |d(x,y)-d(y,x)|.$ Given a metric structure $M$, we have that $\phi^M=0$ since the metric is symmetric. The guiding principle is that a sentence is \textbf{true} in $M$ if $\phi^M=0$, and otherwise the value of $\phi^M$ is a measure of the degree of failure. A \textbf{language} $L$, on some vocabulary $\tau$, is the set of all formulas in that vocabulary. We will often omit mentioning both if they are clear from context.

\item One difference with first-order logic is that we will often make use of the uniform limits of formulas, where the uniform limit is taken with respect to a particular metric structure $M$.  Following the convention of \cite{mtfms}, a uniform limit of formulas is called a \textbf{definable predicate}. We usually make a slight abuse of notation and write definable predicates as functions $\phi:M^n\to[0,1]$. We should note that definable predicates can also be derived purely syntactically without referencing a metric structure using forced limits (see \cite{mtfms}).  

\item By a \textbf{quantifier-free definable predicate}, we mean a definable predicate that is the uniform limit of quantifier-free formulas. 

\item Given a metric structure $M$ in a language $L$, and a set $B\subset M$, we can construct the language $L(B)$ by adding the elements of $B$ as constants in the vocabulary, and treat $M$ as an $L(B)$-metric structure. If a sequence of $L(B)$-formulas converges uniformly in $M$ to a function $\phi:M^n\to[0,1]$, we call $\phi$ an $L(B)$-definable predicate.

\item A \textbf{theory} $T$ is a set of sentences. We say that a metric structure $M$ is a \textbf{model of a theory} $T$, and write $M\models T$, if $\phi^M=0$ for every $\phi\in T$. Also, the theory of $\mathrm{Th}(M)$ is the set of all sentences $\phi$ such that $\phi^M=0$. A theory is \textbf{complete} if $T=\mathrm{Th}(M)$ for some metric structure $M$. We call the expression $\phi=0$, for a sentence $\phi$, a \textbf{condition}.

\item Given a theory $T$ on a language $L$, we write $\mathrm{Mod}_L(T)$ for the set of all metric structures that are models of $T$. Given a class of $L$-structures $\mathcal{C}$, we say that $\mathcal{C}$ is \textbf{axiomatizable} if the is theory $T$ such that $\mathcal{C}=\mathrm{Mod}_L(T)$, and say that $T$ is the set of axioms for $\mathcal{C}$ in $L$. 

\item A theory $T$ \textbf{eliminates quantifiers} if for every definable predicate $\phi(x_1,\dotsc, x_n)$ there is a quantifier-free definable predicate $\psi(x_1,\dotsc,x_n)$ such that for every $M\models T$ we have $$|\phi-\psi|^M=0.$$
\end{enumerate}
\end{defn}

The remainder of this section explores some basic concepts of continuous logic that will be used in future sections. We start with definable sets:

\begin{rmk}
Given a metric structure $M$ with distance $d$, we treat $M^n$ as a (complete, bounded) metric space equipped with the metric 
$$d_n(\bar a, \bar b) = \max_{1\leq i\leq n}d(a_i,b_i).$$
\end{rmk}

\begin{defn}
Let $M$ be a metric $L$-structure, $D$ a non-empty, metrically closed subset of $M^n$ and $B\subset M$ a set of parameters. We say that $D$ is an $L(B)$-definable set (or that $D$ is definable in $M$ with parameters from $B$) if there is an $L(B)$-definable predicate $\phi$ such that 
$$\phi^M(\bar x) = d_n^M(\bar x,D): \bar x\mapsto \inf_{\bar a\in D} d_n^M(\bar x,\bar a).$$
\end{defn}
This conditions might seem quite strong to the reader. However, in continuous logic, this condition is equivalent to quantification over the set $D$:

\begin{fact}[{\cite[Theorem  9.17]{mtfms}}]
For a closed set $D\subset M^n$ the following are equivalent:
\begin{enumerate}
\item $D$ is definable in $M$ with parameters from $B$.
\item For any $L(B)$-definable predicate $\phi:M^m\times M^n\to[0,1]$, the predicate $\psi:M^m\to[0,1]$ defined by
$$\psi(\bar x)= \inf_{\bar y\in D}\phi(\bar x,\bar y)$$
is $L(B)$-definable (and similarly for sup). 
\end{enumerate}
\end{fact}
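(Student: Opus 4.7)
The approach splits naturally into the two implications. The easy direction, $(2) \Rightarrow (1)$, follows by taking $\phi(\bar x, \bar y) = d_n(\bar x, \bar y)$, which is already quantifier-free $L$-definable. Then $\psi(\bar x) := \inf_{\bar y \in D} d_n(\bar x, \bar y) = d_n(\bar x, D)$ is $L(B)$-definable by hypothesis (2), which is exactly the requirement for $D$ to be an $L(B)$-definable set.

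For $(1) \Rightarrow (2)$, the plan is to approximate $\psi(\bar x) = \inf_{\bar y \in D} \phi(\bar x, \bar y)$ by definable predicates whose infimum ranges over all of $M^n$ but which heavily penalize straying from $D$. Letting $\delta(\bar y) = d_n(\bar y, D)$ be the $L(B)$-definable distance predicate witnessing definability of $D$, I would set
$$\psi_n(\bar x) = \inf_{\bar y \in M^n} \min\bigl(1,\; \phi(\bar x, \bar y) + n \cdot \delta(\bar y)\bigr).$$
Each $\psi_n$ is $L(B)$-definable because the continuous connective $\min(1, s+nt) : [0,1]^2 \to [0,1]$ is applied to two $L(B)$-definable predicates, and an unrestricted $\inf$ preserves definability. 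The statement for $\sup$ is symmetric, using $\sup_{\bar y \in M^n} \max\bigl(0, \phi(\bar x, \bar y) - n \cdot \delta(\bar y)\bigr)$.

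The central step is uniform convergence $\psi_n \to \psi$. Restricting the inf to $\bar y \in D$ gives $\psi_n \leq \psi$ immediately. For the reverse inequality, fix $\varepsilon > 0$; since $\phi$ is a uniform limit of (uniformly continuous) formulas it is itself uniformly continuous, so we may pick $\varepsilon'$ so that the modulus of continuity of $\phi$ in $\bar y$ sends $\varepsilon'$ to a value less than $\varepsilon$. Split $M^n$ at the threshold $\delta(\bar y) = \varepsilon'$: on $\{\delta(\bar y) < \varepsilon'\}$, every $\bar y$ lies within $\varepsilon'$ of some $\bar y' \in D$, so $\phi(\bar x, \bar y) \geq \phi(\bar x, \bar y') - \varepsilon \geq \psi(\bar x) - \varepsilon$; on $\{\delta(\bar y) \geq \varepsilon'\}$, once $n \geq 1/\varepsilon'$ the quantity $n \delta(\bar y)$ already exceeds $1$, so the truncated sum equals $1 \geq \psi(\bar x)$. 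Thus $\psi_n \geq \psi - \varepsilon$ uniformly in $\bar x$. The main obstacle is precisely the interaction of the $[0,1]$-valued truncation with the Lagrangian-style penalty $n\delta$: without truncation the connective is not valid in the logic, but truncation could in principle obscure the small deviations we are tracking. The case split on $\delta(\bar y) < \varepsilon'$ versus $\delta(\bar y) \geq \varepsilon'$ is what keeps the bookkeeping honest, separating the regime where uniform continuity of $\phi$ does the work from the regime where the penalty does.
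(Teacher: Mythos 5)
Your proof is correct and reconstructs the standard argument behind \cite[Theorem 9.17]{mtfms}; the paper itself simply cites this result without reproving it. Your Lagrangian-penalty construction $\psi_n(\bar x) = \inf_{\bar y}\min\bigl(1,\phi(\bar x,\bar y)+n\,d_n(\bar y,D)\bigr)$, with the case split on whether $d_n(\bar y,D)$ is below or above the threshold $\varepsilon'$ dictated by the uniform modulus of continuity of $\phi$, is essentially the same mechanism used in the cited source.
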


With this in mind, we can give a basic definition of the definable and algebraic closure of a set:

\begin{defn}
Let $M$ be a metric $L$-structure, a set $B\subset M$, and a tuple $\bar a\in M^n$. We say that $\bar a$ is definable over $B$ if $\{\bar a\}$ is $L(B)$-definable (i.e. $d_n(\bar x,\bar a)$ is an $L(B)$-definable predicate). We say that $\bar a$ is algebraic over $B$ if there is a compact set $C\subset M^n$ such that $\bar a\in C$ and $C$ is $L(B)$-definable. Thus, we define:
$$\mathrm{dcl}_M(B)=\{\bar a\mid \text{ $\bar a$ is definable over $B$}\}$$
and
$$\mathrm{acl}_M(B)=\{\bar a\mid \text{ $\bar a$ is algebraic over $B$}\}$$
\end{defn}

\section{A Language For Ordered Metric Structures}
In discrete logic, definable sets in dimension 1 in o-minimal structures are exactly those that are quantifier-free definable in the language $\{\leq\}$,
so to generalize the definition of o-minimality, we will first look for the proper generalization of the language $\{\leq\}$.

\begin{defn}\label{meta-def}
    We say that a set $M$ is a \emph{metric linear order} if it is assigned a linear ordering together with a metric such that each open ball in the metric is order-convex.
    
    In a metric linear order $M$, we define the following functions:
    Let $d_\leq(x,y)$ be the distance predicate to $\{(x,y) : x \leq y\}$,
    and let $r(x,y)$ denote the distance from $x$ to the ray $(-\infty,y]$.
\end{defn}

Metric linear orders were defined (under the name of ``convex linear orders'') in \cite{vanthe1,vanthe2}, where they are studied from a structural Ramsey theory perspective, without using the framework of continuous logic.

\begin{lem}\label{meta-basic}
    Let $M$ be a metric linear order.
    Then 
    \begin{itemize}
        \item Closed balls are also order-convex.
        \item For any $a \in M$, $d(x,a)$ is nonincreasing on $(-\infty,a]$, and nondecreasing on $[a,\infty)$.
        \item The set $\{(x,y) : x \leq y\}$ is closed in the metric topology.
        \item The function $r(x,y)$ is $2$-Lipschitz and given by $$ r(x,y) = \begin{cases}
            0 & x \leq y \\
            d(x,y) & y \leq x.
        \end{cases}
        $$
        \item The identity $r(x,y) + r(y,x) = d(x,y)$ holds.
        \item For any $a \in M$, $r(x,a)$ is nondecreasing.
    \end{itemize}
\end{lem}
\begin{proof}
    Closed balls are intersections of families of order-convex open balls, and are thus order-convex.

    Fix $a \in M$. If $b \leq c \leq a$, then the closed ball of radius $d(a,b)$ around $b$ also contains $c$ by order-convexity, so $d(a,c) \leq d(a,b)$.

    Suppose that $(x_0,y_0) \in \{(x,y) : x > y\}$.
    For any $\delta < \frac{1}{2}d(x_0,y_0)$,
    as the open balls $B_\delta(x_0)$ and $B_\delta(y_0)$ are order-convex, do not intersect, and contain points $x_0,y_0$ respectively with $x_0 > y_0$,
    we see that every point of $B_\delta(x_0)$ is greater than every point of $B_\delta(y_0)$, so 
    $$(x_0,y_0) \in B_{\delta}(x_0) \times B_{\delta}(y_0) \subseteq \{(x,y) : x > y\}.$$
    This shows that that $\{(x,y) : x > y\}$ is open, and $\{(x,y) : x \leq y\}$ is closed.
    
    Now we check claims about $r$.
    Clearly if $x \leq y$, then $r(x,y) = 0$. If instead $x > y$, then for any $z \leq y$, by monotonicity, $d(x,z) \geq d(x,y)$, so $d(x,y) = r(x,y)$.
    
    From this, we will check the 2-Lipschitz property. If $(x_0,y_0),(x_1,y_1)$ are pairs, then $|r(x_0,y_0)-r(x_1,y_1)|$ is either 0, $|d(x_0,y_0)-d(x_1,y_1)|$, or $d(x_i,y_i)$ for $i = 0,1$.
    As $d(x,y)$ is itself 2-Lipschitz, without loss of generality we can assume this is $d(x_0,y_0)$ in the case where $x_0 > y_0$ and $x_1 \leq y_1$.
    If $x_1 \leq y_0$, then $d(x_0,x_1) \geq d(x_0,y_0)$, so $d(x_0,y_0) \leq d((x_0,y_0),(x_1,y_1))$.
    If $x_1 \geq y_0$, then $y_0 \leq x_1 \leq y_1$ so by monotonicity $d(y_0,x_1) \leq d(y_0,y_1)$, so
    $$d(x_0,y_0) \leq d(x_0,x_1) + d(x_1,y_0) \leq d(x_0,x_1) + d(y_0,y_1) \leq 2d((x_0,y_0),(x_1,y_1)).$$

    The remaining claims follow from the piecewise form of $r$ and the fact that $d(x,a)$ is nondecreasing on $[a,\infty)$.
\end{proof}

Before giving a piecewise description of $d_\leq$, we introduce a notation that will also be useful when discussing metric cyclic orders:
\begin{defn}\label{defn:d_delta}
    In any metric language, let $d_\Delta$ be the formula $$d_\Delta(x,y) = \inf_z(\max(d(x,z),d(y,z)))$$
    which is the distance predicate to the definable set $\Delta = \{(a,a) : a \in M\}$, the \emph{diagonal} of $M^2$.
\end{defn}
\begin{lem}
    In any metric structure, for all $x,y$, $d(x,y) \leq d_\Delta(x,y)$, and $M$ is an ultrametric if and only if for all $x,y$, $d_\Delta(x,y) = d(x,y)$.
\end{lem}
\begin{proof}
    We see that $d(x,y) \leq d_\Delta(x,y)$ by observing that 
    $$\inf_z(\max(d(x,z),d(y,z))) \leq \max(d(x,y),d(y,y)) = d(x,y).$$
    In the ultrametric case, we see that for any $z$, $\max(d(x,z),d(y,z)) \geq d(x,y)$, so $$\inf_z(\max(d(x,z),d(y,z))) \geq d(x,y).$$
    The statement that for all $x,y$, $\inf_z(\max(d(x,z),d(y,z))) \geq d(x,y)$ also states that for all $x,y,z,$ $\max(d(x,z),d(y,z)) \geq d(x,y)$, so this statement is equivalent to the ultrametric inequality.
\end{proof}

\begin{lem}\label{lem:dleq_c}
    In any metric linear order $M$, then for all $a,b \in M$,
    $$d_\leq(a,b)
    = \inf_{z}\max(r(a,z),r(z,b))
   = \begin{cases}
        0 & \textrm{if }a \leq b\\
        d_\Delta(a,b) & \textrm{if }b \leq a
    \end{cases}
    $$
\end{lem}
\begin{proof}
    If $a \leq b$, the first two of these of these are clearly 0.
    We also find, by plugging in $z = a$,
    that $$\inf_{z}\max(r(a,z),r(z,b)) \leq \max(r(a,a),r(a,b)) = 0.$$

    As $\Delta \subseteq \{(x,y): x \leq y\}$, we know that
    $d_\leq(a,b) \leq d_\Delta(a,b)$.
    
    If $b < a$, then assume that $(c,d) \in \{(x,y): x \leq y\}$ minimizes $d((a,b),(c,d))$.
    
    We do not increase the distance by replacing $(c,d)$ with $(a,d)$ or $(c,b)$, as long as these points are in $\{(x,y): x \leq y\}$.
    Thus we may assume that $b \leq c \leq d \leq a$.
    If this is the case, then $d(b,c) \leq d(b,d)$, so $d((a,b),(c,d)) \geq d((a,b),(c,c))$, so we can assume that $c = d$. Thus $d_\leq(a,b) = d((a,b),(c,c)) \geq d_\Delta(a,b)$ for some $c \in [b,a]$.

    We have seen that $d_\leq(a,b) = \inf_{z \in [b,a]}\max(d(a,z),d(z,b)) = \inf_{z \in [b,a]}\max(r(a,z),r(z,b))$, as for any $c' \in [b,a]$, $r(a,c') = d(a,c')$ and $r(c',b) = d(c',b)$.
    Fix $c' \not \in [b,a]$, and assume without loss of generality that $c' < b$. we find that $r(a,c') = d(a,c')$ while $r(c',b) = 0$, so $\max(r(a,c'),r(c',b)) = d(c',a) \geq d(b,a) \geq d_\leq(a,b)$ by convexity. Thus the infimum does not change when we extend to $z \in M$ instead of $z \in [b,a]$, and 
    $$\inf_{z}\max(r(a,z),r(z,b)) = \inf_{z \in [b,a]}\max(r(a,z),r(z,b)) = d_\leq(a,b).$$
\end{proof}

\begin{lem}
    Let $M$ be both an $\mathcal{L}$-metric structure for some language $\mathcal{L}$ and a metric linear order, with the same metric.
    If any one of $d_\leq,r$ is a definable predicate, then so is the other one.
\end{lem}
\begin{proof}
    If $r$ is definable, then $\inf_{z}\max(r(x,z),r(z,y))$ is definable, and this is $d_\leq(x,y)$ by Lemma \ref{lem:dleq_c}.

    If $d_\leq$ is definable, then 
    consider $\phi(x,y) = \inf_{w,z : w \leq z}d(x,w) + d(z,y)$ is a definable predicate, and we claim that $\phi(x,y) = r(x,y)$.
    
    If $a \leq b$, then $\phi(a,b) = 0 = r(a,b)$.
    If $a > b$, then by setting $w = z = a$ we see that this is at most $d(a,b) = r(a,b)$.
    For any $c \leq d$, the intervals $[c,a]$ and $[b,d]$ must intersect, and if $e$ is in the intersection, we see that $d(a,c) \geq d(a,e)$ and $d(b,d) \geq d(b,e)$ so $d(a,c) + d(b,d) \geq d(a,e) + d(b,e) \geq d(a,b)$, so $\phi(x,y) \geq d(a,b)$.
\end{proof}

As in a reasonable metric theory of linear orders, $\{(x,y) : x \leq y\}$ should be definable.
As both $d_\leq, r$ will be definable, we include them all in our language, so that they are both quantifier-free definable:
\begin{defn}
    Let $\Lleq$ be the metric language consisting of the binary predicates $\{d_\leq,r\}$.
\end{defn}

\subsection{Axiomatizing The Theory of Metric Linear Orders}

\begin{defn}
Let $\MLO{}$ be the $\Lleq$-theory consisting of the following conditions:
\begin{enumerate}[label=(\arabic*)]
\item\label{axiom:asymm} $\sup_{x,y} |(r(x,y) + r(y,x)) - d(x,y)| = 0$ 
\item\label{axiom:total} $\sup_{x,y} \min\{r(x,y), r(y,x)\} = 0$
\item\label{axiom:trans} $\sup_{x,y,z} r(x,z)\ \dot- \ (r(x,y) + r(y,z)) = 0$ 
\item\label{axiom:dleq} $\sup_{x,y}|d_\leq(x,y) - \inf_{z}\max(r(x,z),r(z,y))| = 0$.
\end{enumerate}
\end{defn}
Conditions \ref{axiom:asymm} and \ref{axiom:total} together imply that exactly one of $r(x,y)$ and $r(y,x)$ is equal to $d(x,y)$ and the other one is equal to zero. Hence our ordering is \textbf{total}. Condition \ref{axiom:trans} is a version of the triangle inequality that gives us the following properties:
\begin{itemize}
\item \textbf{Transitivity:} If $x\leq y$ and $y\leq z$, then $r(x,y)=r(y,z)=0$, so condition \ref{axiom:trans} forces that $r(x,z)=0$. Thus, $x\leq z$. 
\item \textbf{Monotonicity:} If $y\leq z\leq x$, then the condition forces $r(x,z) \leq r(x,y)$. Meaning that $r$ is nondecreasing on $(-\infty, x]$.
\end{itemize}
Note that $r$ is not symmetric, so condition \ref{axiom:trans} is not a full triangle inequality condition.\par
One can think of $r$ as an ``asymmetrization" of the distance predicate, where for each pair of elements $x$ and $y$, we pick one of the pairs $(x,y)$ and $(y,x)$ to give the full weight of the distance $d(x,y)$ and zero to the other, and do so in a way that is transitive and monotonic. 

\begin{remark}
Let $(M;\leq)$ is a discrete structure equipped with a linear order $\leq$. Now consider $(M;\leq)$ as a continuous logic structure with the discrete $\{0,1\}$-metric, and with $0$ being true and $1$ false. Then $\leq$ satisfies the axioms for $r$ in the previous definition.
\end{remark}

\begin{thm}\label{thm:mlo_models}
The models of $\MLO{}$ are exactly the complete, bounded metric linear orders.
\end{thm}

\begin{proof}
Suppose $(M,d)$ is complete, bounded metric space with a linear order $\leq$ such that each open ball in the metric is order convex. Then interpreting $d_\leq,r$ as in \ref{meta-def} yields a metric structure in $\Lleq$. Conditions \ref{axiom:asymm} and \ref{axiom:total} follow directly from \ref{meta-basic}. Condition \ref{axiom:trans} is justified as follows by cases: 
\begin{itemize}
\item $x\leq y\leq z$: then all the $r$ predicates in condition \ref{axiom:trans} are zero, and the condition holds.
\item $x\leq z\leq y$, and $y\leq x\leq z$: since $x\leq z$, the left-hand side is zero, and in the right-hand side we have either zero or some distance predicate, which is at least zero.
\item $z\leq y\leq x$: then all the $r$ predicates are actually equal to the distance, and the condition follows from the triangle inequality.
\item $y\leq z\leq x$: the condition simplifies to $d(x,z)\leq d(x,y)$. This equality follows from the fact that $d(x,\cdot)$ is nonincreasing on $(-\infty, x]$ by \ref{meta-basic}.
\item $z\leq x\leq y$:  the condition simplifies to $d(x,z)\leq d(y,z)$. This equality follows from the fact that $d(z,\cdot)$ is nondecreasing on $[z,\infty)$ by \ref{meta-basic}.
\end{itemize}
Once this is done, condition \ref{axiom:dleq} is satisfied by Lemma \ref{lem:dleq_c}.

On the other hand, suppose $(M; d, d_\leq, r)\models \MLO{}$. Define a linear order $\leq$ on $M$ by saying that $x\leq y$ iff $r(x,y)=0$. We need to show that $\leq$ is a linear order and that every open ball in the metric is order-convex.\par
As for the axioms of linear orders: reflexivity follows from the fact that $d(x,x)=0$, transitivity from condition \ref{axiom:trans}, totality from condition \ref{axiom:total}. Antisymmetry follows from condition\ref{axiom:asymm}: if $x\leq y$ and $y\leq x$ then $d(x,y)=r(x,y)+r(y,x)=0$, so $x=y$. Here we need that $d$ be a metric.\par
Fix $a\in M$ and $\delta>0$. We need to show that $B_\delta(a)$ is order-convex. Fix $x,y\in B_\delta(a)$ and $z\in M$ such that $x\leq z\leq y$. If $a\leq z$, then $a\leq z\leq y$ and, since $d(a,y)<\delta$, condition \ref{axiom:trans} gives us that
$$d(a,z)=r(z,a)\leq r(z,y)+r(y,a) = d(a,y)<\delta.$$
The case $a\geq z$ is handled similarly by noticing that $x\leq z\leq a$ and using condition \ref{axiom:trans}.
\end{proof}

\subsection{Topologies}
\begin{lem}\label{lem:top_refine}
    In any metric linear order $M$, the metric topology refines the order topology.
\end{lem}
\begin{proof}
    As seen in Lemma \ref{meta-basic}, the set $\{(x,y) : x > y\}$ is metric-open. The rays $(-\infty,a),(a,\infty)$ are preimages of this set under the maps $x \mapsto (x,a)$ and $y \mapsto (a,y)$, so they are also metric-open, as are all open intervals, and in turn, any order-open set.
\end{proof}

This means that the identity function from $M$ equipped with the metric topology to $M$ equipped with the order topology is continuous, allowing us to make the following translations between the two topologies:

\begin{cor}
    Suppose $M$ is metric linear order and $X \subseteq M$.

    If $X$ is open or closed in the order topology, it is in the metric topology as well.
    
    If $X$ is compact or connected in the metric topology, it is in the order topology as well.
\end{cor}

\begin{lem}\label{lem:top_agree}
    In a metric linear order $M$ which is a dense linear order without endpoints, the metric and order topologies agree if and only if for every $a \in M$ and $\varepsilon > 0$, there are $b< a < c$ with $d(b,a),d(a,c) < \varepsilon$.
\end{lem}
\begin{proof}
    By Lemma \ref{lem:top_refine}, the topologies agree if and only if every open ball is order-open.

    This is equivalent to saying for every $a \in M$, $\varepsilon > 0$, $a$ is contained in an open interval $(b,c)$ which is in turn contained in the ball. This happens precisely when $b < a < c$ and $d(b,a),d(a,c) < \varepsilon$.
\end{proof}

This condition on the topologies agreeing is, perhaps inconveniently, not elementary.

\begin{eg}
    If $M$ is a metric linear order which is a dense linear order without endpoints where the metric and order topologies agree, and $n \in \N$, define $M_n$ to be a copy of $M$ with the same order, where all distances are scaled down by a factor of $\frac{1}{n}$.

    The sequence $(M_n : n \in \N)$ is a sequence of metric linear orders which are dense linear orders without endpoints where the metric and order topologies agree, but their ultraproduct is a single point.
\end{eg}
\begin{proof}
    The diameter of each $M_n$ is $\frac{1}{n}$, so the ultraproduct satisfies the condition $\sup_{x,y}d(x,y) = 0$.
\end{proof}

As a way around this, we propose the following theory:
\begin{defn}\label{defn:mdlo}
    Let \MDLO{} be the theory of \emph{metric dense linear orders} consisting of the two countable families of axioms \begin{itemize}
        \item For any rational $p \in \Q \cap [0,1]$,
        $\sup_x\inf_y |r(x,y) - p| = 0$
        \item For any rational $p \in \Q \cap [0,1]$,
        $\sup_x\inf_y |r(y,x) - p| = 0$.
    \end{itemize}
\end{defn}

The name of this theory is justified by the following two density statements, which together imply that the metric and order topologies agree in models of \MDLO{}:

\begin{lem}\label{lem:dense_ordered_distances}
    The models of \MDLO{} are precisely the metric linear orders $M$ such that for every $a \in M$, the sets $\{d(a,b): b < a\}$ and $\{d(a,b): b > a\}$ are both dense in $[0,1]$.
\end{lem}
\begin{proof}
    For any rational $p$, $M \vDash \sup_x\inf_y |r(x,y) - p| = 0$ if and only if for all $a \in M$,
    $M \vDash \inf_y |r(a,y) - p| = 0$. This means that for any $\varepsilon > 0$, there is some $b \in M$ with $|r(a,b) - p| < \varepsilon$. For $p > 0$, we can choose $0 < \varepsilon < p$, and see that this implies $r(a,b) > 0$, so $b < a$. This means that there are values $b \in M$ with $b < a$ and thus $d(a,b) = r(a,b)$, with $r(a,b)$ arbitrarily close to $p$ - when we consider this for all $p$, this is equivalent to the set $\{d(a,b): b < a\}$ being dense in $[0,1]$.
    
    A symmetric argument shows that the remaining family of axioms is equivalent to the set $\{d(a,b): a < b\}$ being dense in $[0,1]$.
\end{proof}

\begin{lem}\label{lem:mdlo_dlo}
    Any model of \MDLO{} is a dense linear order without endpoints.
\end{lem}
\begin{proof}
    Suppose $M \vDash \MDLO{}$ and $a < b$ are in $M$. Then by Lemma \ref{lem:dense_ordered_distances}, there is some $c > a$ with $d(a,c) < d(a,b)$, so $a < c < b$ as $M \vDash \MLO{}$.

    Lemma \ref{lem:dense_ordered_distances} clearly contradicts $M$ having endpoints.
\end{proof}

\section{The Ultrametric Case}\label{sec:ultra}
In this section, we draw our attention to the case of metric linear orders whose metrics are ultrametrics. In particular, we study the model companion of the theory of ultrametric linear orders, which we call the theory of ultrametric dense linear orders. First, though, we review some background on quantifer elimination and model companions in contnuous logic:

\subsection{Quantifer Elimination and Model Companions}

We will show that \DU{} is in fact complete and thus equivalent to $\mathrm{Th}(U_S)$, by showing that it is also the model companion of \UM{}. We take this approach, as we will want to take the same approach to finding the model companion of the theory of ultrametric linear orders. The blueprint we use for finding model companions in continuous logic is the one used in \cite{rtrees}, as $\R$-trees and ultrametrics are inherently related. We start by recalling the definition of existentially closed models in continuous logic, and then work to show that the existentially closed models of \UM{} are precisely the models of \DU{}.

\begin{defn}[{\cite[Defn. 3.4]{rtrees}}]
    If $T$ is a theory, a model $M \vDash T$ is an \emph{existentially closed} model of $T$ if and only if for all $N \vDash T$ with $M \subseteq N$, and all quantifier-free formulas $\phi(\bar x,\bar y)$, and tuples $\bar a \in M^m$, if 
    $$\psi(\bar x) = \inf_{\bar y}\phi(\bar x,\bar y),$$
    then
    $$\psi^M(\bar a) = \psi^N(\bar a).$$
\end{defn}

\begin{fact}[{\cite[Prop. 3.5]{rtrees}}]
    A theory $T$ is model complete if any embedding between model of $T$ is an elementary embedding. This is equivalent to the condition that every model of $T$ is existentially closed.
\end{fact}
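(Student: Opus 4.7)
The plan is to establish the two directions of the equivalence separately, using the continuous-logic analogue of the standard Robinson argument.

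The forward direction ($\Rightarrow$) is essentially immediate from the definitions: if every embedding $M \subseteq N$ between models of $T$ is elementary, then in particular it preserves the value of every definable predicate, including those of the form $\psi(\bar{x}) = \inf_{\bar{y}}\phi(\bar{x},\bar{y})$ for quantifier-free $\phi$. Hence $\psi^M(\bar{a}) = \psi^N(\bar{a})$ for all $\bar{a} \in M^m$, so $M$ is existentially closed.

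For the reverse direction ($\Leftarrow$), assume every model of $T$ is existentially closed, and let $M \subseteq N$ be an embedding of models of $T$. The plan is to show $M \prec N$ by induction on formula complexity; since continuous logic allows the connective $x \mapsto 1-x$, every $\sup$-quantifier can be rewritten in terms of $\inf$, and it suffices to prove the induction step for $\inf$. The easy inequality $(\inf_{\bar{y}}\chi)^N(\bar{a}) \leq (\inf_{\bar{y}}\chi)^M(\bar{a})$ holds because the infimum in $N$ ranges over a larger set. The reverse inequality is the substantive step, because when $\chi$ is not quantifier-free, one cannot invoke existential closure directly.

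To get past this, I would use the standard elementary-chain construction: build
$$M = M_0 \subseteq N_0 = N \subseteq M_1 \subseteq N_1 \subseteq M_2 \subseteq N_2 \subseteq \cdots$$
with $M_i \prec M_{i+1}$, $N_i \prec N_{i+1}$, and each inclusion an embedding of models of $T$. To produce $M_{i+1}$ given $N_i \supseteq M_i$, use continuous-logic compactness to realize an elementary extension of $M_i$ containing an isometric copy of $N_i$ over $M_i$; that is, show that $\mathrm{eldiag}(M_i) \cup \mathrm{diag}(N_i)$ is consistent. A finite fragment of $\mathrm{diag}(N_i)$ is controlled by quantifier-free formulas $\phi(\bar{a},\bar{d})$ with $\bar{a} \in M_i$, $\bar{d} \in N_i$, and existential closure of $M_i$ in $N_i$ says $\inf_{\bar{y}}\phi^{M_i}(\bar{a},\bar{y}) = \inf_{\bar{y}}\phi^{N_i}(\bar{a},\bar{y}) \leq \phi^{N_i}(\bar{a},\bar{d})$, which delivers the approximate realizations needed for consistency. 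Constructing $N_{i+1}$ is symmetric. The limit $L = \bigcup_i M_i = \bigcup_i N_i$ is then an elementary extension of both $M$ and $N$ by the continuous-logic elementary chain theorem, giving $M \prec L \succ N$ and hence $M \prec N$ since $M \subseteq N \subseteq L$.

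The main obstacle is the consistency check for the diagram step. In continuous logic the elementary diagram consists of real-valued conditions of the form $|\phi(\bar{c}) - r| \leq \varepsilon$, so the argument cannot be carried out Boolean-style; one must patch together $\varepsilon$-approximate witnesses from $N_i$ using existential closure, then invoke continuous-logic compactness. This is the only place where the hypothesis is used, and is what makes the existentially closed assumption the right strengthening of quantifier-free preservation to yield full elementarity.
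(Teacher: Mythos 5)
The statement you are asked to prove is labeled a \emph{Fact} and cited from \cite[Prop. 3.5]{rtrees}, so this paper does not give a proof. Your argument is the standard continuous-logic adaptation of Robinson's test, and it is correct: the forward direction follows because elementary embeddings preserve all formula values, and the reverse direction via the interleaved elementary chain $M = M_0 \subseteq N_0 \subseteq M_1 \subseteq N_1 \subseteq \cdots$ goes through as you describe.

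Two small remarks on the substantive step. First, when you check that a finite fragment of $\mathrm{eldiag}(M_i) \cup \mathrm{diag}(N_i)$ is satisfiable, you should note explicitly that finitely many quantifier-free conditions $\phi_1(\bar a,\bar d)\leq\varepsilon_1,\dots,\phi_k(\bar a,\bar d)\leq\varepsilon_k$ combine into a single quantifier-free condition, e.g.\ $\max_j\bigl(\phi_j\,\dot-\,\varepsilon_j\bigr)=0$, so that existential closure of $M_i$ in $N_i$ applies to it as one formula; you say this implicitly (``patch together $\varepsilon$-approximate witnesses'') but it is the crux. Second, after forming the union of the chain one must pass to the metric completion to obtain a metric structure $L$; the elementary chain theorem of continuous logic then gives $M_i \prec L$ and $N_i \prec L$, and since the two chains interleave the completion is the same for both. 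These are minor but worth making precise. With those in place, the conclusion $M \prec N$ from $M \subseteq N \subseteq L$, $M\prec L$, $N\prec L$ is immediate, and the proof is complete.
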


\begin{defn}[{\cite[Defn. 3.4]{rtrees}}]
    Let $T$ be a theory. A theory $S$, in the same language, is the model companion of $T$ if:
    \begin{itemize} 
    \item Every model of $T$ embeds into a model of $S$.
    \item Every model of $S$ embeds into a model of $T$.
    \item $S$ is model complete.
    \end{itemize}
\end{defn}

\begin{defn}
   A theory $T$ has \emph{amalgamation of substructures} when $M_0, M_1, M_2$ are substructures of models of $T$, and $f_1 : M_0 \hookrightarrow M_1$, $f_2 : M_0 \hookrightarrow M_2$ are embeddings, then there is a model $N \vDash T$ with embeddings $g_1 : M_1 \hookrightarrow N, g_2 : M_2 \hookrightarrow N$ such that $g_1 \circ f_1 = g_2 \circ f_2$.
\end{defn}

\begin{fact}[{\cite[Prop. 3.8]{rtrees}}]\label{fact:model_companion_qe}
    Assume that $T_1$ and $T_2$ are theories in the same language such that $T_2$ is the model companion of $T_1$ and $T_1$ has amalgamation over substructures. Then $T_2$ has quantifier elimination.
\end{fact}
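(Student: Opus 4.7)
The plan is to use the standard continuous-logic criterion that a theory $T$ has quantifier elimination if and only if whenever $M, N \vDash T$ share a common substructure $A$, any two tuples $\bar a \in M$, $\bar b \in N$ with matching quantifier-free types over $A$ also have matching complete types over $A$. Equivalently, any two models of $T$ sharing a substructure $A$ satisfy $M \equiv_A N$. The strategy is then, given such $M, N, A$ with $M, N \vDash T_2$, to construct a single model $Q \vDash T_2$ into which both $M$ and $N$ embed over $A$; model completeness of $T_2$ finishes the job.

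The construction is a zig-zag between $T_1$ and $T_2$. Using the model companion property, first embed $M \hookrightarrow M^+ \vDash T_1$ and $N \hookrightarrow N^+ \vDash T_1$, both fixing $A$, so that $A$ is a substructure of each of $M^+$ and $N^+$. Next, apply amalgamation of $T_1$ over substructures to $A \subseteq M^+$ and $A \subseteq N^+$ to obtain $P \vDash T_1$ together with embeddings $M^+ \hookrightarrow P$ and $N^+ \hookrightarrow P$ commuting over the inclusions of $A$. Finally, by the model companion property again, embed $P \hookrightarrow Q \vDash T_2$. The resulting composites yield embeddings $M \hookrightarrow Q$ and $N \hookrightarrow Q$ which agree on $A$.

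Since $T_2$ is model complete --- this is built into the definition of model companion recalled in the excerpt --- both embeddings $M \hookrightarrow Q$ and $N \hookrightarrow Q$ are elementary. Hence $M \equiv_A Q \equiv_A N$, which by the criterion yields quantifier elimination of $T_2$.

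I do not expect a genuine obstacle here; the zig-zag is entirely formal once the setup is in place. The one point requiring care is the criterion invoked at the outset: in continuous logic, quantifier elimination of $T$ means that every definable predicate is a uniform limit of quantifier-free formulas, and its equivalence with injectivity of the restriction $S_n(T) \to S_n^{\mathrm{qf}}(T)$ on type spaces is a compactness argument standard in the literature \cite{mtfms} but worth citing explicitly given the continuous (as opposed to discrete) setting.
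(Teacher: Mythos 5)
This statement appears in the paper only as a Fact cited from~\cite[Prop.~3.8]{rtrees}; the paper gives no proof of its own, so there is nothing internal to compare against. Your reconstruction is correct and is the standard substructure-completeness argument. The zig-zag (embed $M,N$ into models of $T_1$, amalgamate over $A$ using the hypothesis, re-embed the amalgam into a model $Q$ of $T_2$, invoke model completeness to make $M\hookrightarrow Q$ and $N\hookrightarrow Q$ elementary over $A$) is exactly what one expects, and the criterion you state at the outset --- QE iff any two models of $T_2$ over a common substructure are elementarily equivalent over that substructure --- is the right one in continuous logic, following from the usual compactness argument for uniform approximability by quantifier-free formulas. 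One small simplification: the intermediate step of embedding $M$ and $N$ into models $M^+,N^+\vDash T_1$ is not strictly necessary, since models of $T_2$ already embed into models of $T_1$ by definition of model companion, which makes $M$, $N$, and $A$ substructures of models of $T_1$; you may therefore apply amalgamation over substructures directly to $M$ and $N$ over $A$. This is cosmetic and does not affect correctness.
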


We now prove a test that we will use to check existential closedness.
\begin{lem}\label{lem:ec_test}
    If $T$ is a theory, a model $M \vDash T$ is an existentially closed model of $T$ if for every model $N \vDash T$ with $M \subseteq N$, all $\bar a \in M^m$, $\bar b \in N^n$, all $\delta > 0$, there exists $\bar c \in M^n$ such that for all atomic formulas $\theta(\bar x,\bar y)$,
    $$|\theta(\bar a, \bar b) - \theta(\bar a,\bar b)| < \delta.$$
\end{lem}
\begin{proof}
    Let $\psi(\bar x) = \inf_{\bar y}\phi(\bar x, \bar y),$
    where $\phi$ is quantifier-free.
    Fix $\bar a$.
    For all $\varepsilon > 0$, we will show that $|\psi^M(\bar a) - \psi^N(\bar a)| < \varepsilon$.
    Fix $\bar b$ such that $\phi(\bar a,\bar b) \leq \psi^N(\bar a) + \frac{\varepsilon}{2}$.
    As $\phi$ is a continuous combination of atomic formulas, there is some $\delta > 0$ such that if for all atomic formulas $\theta(\bar x,\bar y)$, if $\bar c$ is such that $|\theta(\bar a,\bar b) - \theta(\bar a,\bar c)|<\delta$, then $|\phi(\bar a,\bar b) - \phi(\bar a,\bar c)| < \frac{\varepsilon}{2}$.
    By assumption, there is some $\bar c$ meeting this hypothesis, so 
    $\psi^M(\bar a) \leq \phi(\bar a,\bar c) < \psi^N(\bar a) + \varepsilon$.
\end{proof}

\subsection{Ultrametric Metric Structures}
Recall that a metric space is \emph{ultrametric} if it satisfies
$$d(x,z) \leq \max(d(x,y),d(y,z))$$
for all $x,y,z$. It is straightforward to check that this property can be represented with a condition in continuous logic, so metric structures whose metrics are ultrametric are axiomatized by the following theory:
\begin{defn}
    Let \UM{} be the theory, in the empty language of metric structures, of \emph{ultrametric spaces}, consisting of the ultrametric axiom
    $$\sup_{x,y,z}d(x,z) \dot{-} \max(d(x,y),d(y,z))= 0.$$
\end{defn}

We particularly care about a particular theory of ultrametric spaces, which we will later combine with an order. This theory was studied in \cite{conant_um} as the complete theory of a particular ultrametric space constructed in \cite{gao_shao_ultra}:
\begin{defn}
    If $S \subset (0,1]$ is countable and dense, let $U_{S}$ be the set $\{f \in \Q^S : \forall s > 0, \{x \geq s : f(x)\neq 0\}\textrm{ is finite.}\}$
    Give $U_S$ the metric
    $$d(f,g) = \sup\{x : f(x) \neq g(x)\}.$$
\end{defn}
\begin{fact}[\cite{conant_um}]\label{fact:ThUmax}
    The following are true of the theory $\mathrm{Th}(U_S)$ for any countable dense $S \subset (0,1]$:
    \begin{itemize}
        \item $\mathrm{Th}(U_S)$ is complete
        \item $\mathrm{Th}(U_S)$ eliminates quantifiers
        \item Any model of $\mathrm{Th}(U_S)$ is an ultrametric space
        \item Any model of \UM{} embeds (isometrically or equivalently, model-theoretically) into a model of $\mathrm{Th}(U_S)$.
    \end{itemize}
\end{fact}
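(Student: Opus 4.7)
The plan is to prove the four bullets in a different order: first the concrete claims about $U_S$ itself (that it is ultrametric and universal for \UM), and then deduce quantifier elimination and completeness in tandem via a Fra\"iss\'e-style back-and-forth, using existential closedness of $U_S$ inside \UM{}---verified through the test in Lemma \ref{lem:ec_test}---as the bridge between the concrete and abstract claims.

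For the third bullet, the ultrametric inequality on $U_S$ is immediate: if $f(x) \ne h(x)$ then $f(x) \ne g(x)$ or $g(x) \ne h(x)$, so $\{x : f(x) \ne h(x)\}$ is contained in the union of the other two differing-coordinate sets, and the inequality follows by taking $\sup$; any model of $\mathrm{Th}(U_S)$ then inherits the \UM-axiom. For the fourth bullet, by compactness and L\"owenheim--Skolem it suffices to embed every finite ultrametric space into $U_S$ up to arbitrarily small error. I would induct on the size of the subspace: given an isometry sending $\bar a$ to $\bar f \in U_S$, to adjoin a point $a$ I would construct $g \in U_S$ with each $d(g, f_i)$ prescribed, by first approximating each target distance by an element of $S$ (using density of $S$) while respecting the ultrametric constraints among the $f_i$, and then defining $g$ explicitly on the finitely many relevant coordinates. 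Cauchy-completing a sequence of improving approximations, or passing to an ultrapower, yields an isometric embedding of any model of \UM{} into a (possibly elementary extension of) $U_S$, and hence into a model of $\mathrm{Th}(U_S)$.

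The remaining two bullets then follow together from an approximate back-and-forth. With universality in hand, $U_S$ satisfies the hypothesis of Lemma \ref{lem:ec_test}: given $N \vDash \UM$ extending $U_S$, tuples $\bar a$ in $U_S$, $\bar b$ in $N$, and $\delta > 0$, the previous paragraph produces $\bar c \in U_S$ whose pairwise distances to $\bar a$ approximate those of $\bar b$ within $\delta$, and in the empty language these distances exhaust the atomic data. Hence $U_S$ is existentially closed in \UM. Amalgamation of substructures for \UM{} is easy (on a disjoint union of ultrametric spaces over a common substructure $C$, set $d(x, y) = \max(d(x, c), d(c, y))$ for any $c \in C$; the ultrametric inequality makes this independent of the choice of $c$), so Fact \ref{fact:model_companion_qe} will yield quantifier elimination for any model companion of \UM{} satisfied by $U_S$. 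Taking $\mathrm{Th}(U_S)$ in this role gives the second bullet, and completeness---the first bullet---then follows because any two models of $\mathrm{Th}(U_S)$ embed into a common extension via universality and these embeddings are elementary by model-completeness. The main obstacle is the bookkeeping of the approximations in the back-and-forth: arranging $\varepsilon$-tolerances that shrink geometrically, and verifying that the approximate atomic types converge to genuine atomic types in the limit. Once the extension property is in hand, though, this is a standard continuous-logic adaptation of the discrete Fra\"iss\'e construction.
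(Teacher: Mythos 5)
The paper offers no proof of this statement---it is cited as a Fact from \cite{conant_um} (building on \cite{gao_shao_ultra})---so there is no internal argument to compare against. Your reconstruction is a reasonable exercise, and the strategy you sketch parallels what the paper actually carries out for the theory $\DU$ in Section \ref{sec:ultra} (existential closedness via Lemma \ref{lem:ec_test}, amalgamation, then the model-companion machinery). Two steps, however, contain genuine gaps.

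First, your amalgamation formula for ultrametric spaces is wrong. You claim that on the disjoint union of two ultrametric spaces over a common substructure $C$, setting $d(x,y)=\max(d(x,c),d(c,y))$ for an arbitrary $c\in C$ is independent of the choice of $c$. It is not: take $C=\{c_1,c_2\}$ with $d(c_1,c_2)=1$, adjoin $x$ on one side with $d(x,c_1)=0.1$ (forcing $d(x,c_2)=1$), and $y$ on the other with $d(y,c_1)=0.1$ (forcing $d(y,c_2)=1$). Then $\max(d(x,c_1),d(c_1,y))=0.1$ while $\max(d(x,c_2),d(c_2,y))=1$, and taking the larger value for $d(x,y)$ outright violates the ultrametric inequality. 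The correct formula is $d(x,y)=\inf_{c\in C}\max(d(x,c),d(c,y))$, and one should verify that this yields an ultrametric. Amalgamation for $\UM$ is true, but not for the reason you give.

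Second, knowing that $U_S$ is an existentially closed model of $\UM$ does not by itself license treating $\mathrm{Th}(U_S)$ as a model companion of $\UM$, which is what Fact \ref{fact:model_companion_qe} requires. Model companionship demands model completeness, i.e.\ that \emph{every} model of $\mathrm{Th}(U_S)$ is existentially closed, not just $U_S$; deducing that from model completeness is circular, and E.C.\ of a single model does not transfer to its elementary class for free. The paper's way around this for $\DU$ is to give an explicit first-order axiomatization of the existentially closed models (Theorem \ref{thm:du_ec}) and then invoke Fact \ref{fact:universal_model_companion}. Your argument is patchable: the extension property you verify only uses density of the set of distances realized at each point, which \emph{is} an elementary property satisfied by every model of $\mathrm{Th}(U_S)$, so the same verification goes through for all such models. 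But that observation needs to be stated; as written, the jump from one existentially closed model to model completeness of its theory is unjustified.
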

Because $\mathrm{Th}(U_S)$ eliminates quantifiers, it is model complete, which together with the fact that any ultrametric space embeds into a model, tells us that this is the model companion of \UM{}. As this theory is unique, all structures $U_S$ are elementarily equivalent.
\begin{cor}
    The theory \UM{} has a model companion, which is the theory $\mathrm{Th}(U_S)$ for any countable dense $S \subset (0,1]$.
\end{cor}
We will simply refer to this theory as $\mathrm{Th}(U_S)$.

We propose the following theory, which will turn out to be another axiomatization of $\mathrm{Th}(U_S)$:
\begin{defn}
    Let \DU{} be the theory, in the empty language of metric structures, extending \UM{} by the following countable family of axioms: For every rational $p \in \Q \cap [0,1]$, include the axiom
    $$\sup_x\inf_y|d(x,y) - p| = 0.$$
\end{defn}
We call models of this theory \emph{dense ultrametrics}, because of the following observation:
\begin{lem}
    IF $M$ is a metric structure in the empty language, then $M \vDash \DU{}$ if and only if for every $a \in M,$ the set $\{d(a,b) : b \in M\}$ is dense in $[0,1]$.
\end{lem}
(The proof is a simplification of that of Lemma \ref{lem:dense_ordered_distances} below.)

Firstly, we check that $U_S \vDash \DU{}$:
\begin{lem}\label{lem:du_example}
    If $S \subset (0,1]$ is countable and dense, then for any $f \in U_S,$
    $\{d(f,g) : g \in U_S\} = \Q \cap [0,1]$, and in particular, $U_S \vDash \DU{}.$
\end{lem}
\begin{proof}
    This is a slight generalization of \cite[Proposition 2.7]{conant_um}.
    Fix $f \in U_S$.

    For any $s \in S$, we can define $g \in U_S$ such that $f(x) = g(x)$ if and only if $x \neq s$, so $d(f,g) = \sup\{s\} = s$.
\end{proof}

We will prove existential closure through a series of lemmas which we can reuse when linear orders are present.
\begin{lem}\label{lem:um_max_closest}
    Let $M$ be an ultrametric space, let $A \subseteq M$, let $b \in M$, and assume $a_0 \in A$ is such that $d(a_0,b) \leq d(a,b)$ for all $a \in A$.

    Then for all $a \in A$, $d(a,b) = \max\left(d(a,a_0),d(a_0,b)\right)$.
\end{lem}
\begin{proof}
    By the ultrametric inequality, if $x,y,z$ are in an ultrametric space and $d(x,y) < d(x,z)$, then $d(y,z) = d(x,z)$.
    Generalizing, if $d(x,y) \neq d(x,z)$, then $d(y,z) = \max\left(d(x,y),d(x,z)\right)$.

    In our case, if $a \in A$ is such that $d(a,a_0)\neq d(a_0,b)$, then the result follows.
    Otherwise, if $d(a,a_0)\neq d(a_0,b)$, we find that $d(a,b) \leq \max\left(d(a,a_0),d(a_0,b)\right) = d(a_0,b)$. By the assumption of minimality of $d(a_0,b)$, we have $d(a,b) = d(a_0,b) = \max\left(d(a,a_0),d(a_0,b)\right)$.
\end{proof}

\begin{defn}\label{isometric}
    If $M, N$ are ultrametric spaces, $\varepsilon > 0$, and $(a_1,\dots,a_n) \in M^n, (b_1,\dots,b_n) \in N^n$ are tuples of the same length, call these tuples $\varepsilon$-\emph{isometric} if for all $1 \leq i,j \leq n$, $|d(a_i,a_j) - d(b_i,b_j)|< \varepsilon$.
\end{defn}
\begin{lem}\label{lem:delta_isometric_induction}
    If $M \vDash \UM, N \vDash \DU$, $\varepsilon > 0$, and $\bar a \in M^n, \bar b \in N^n$ are $\varepsilon$-isometric tuples of the same length, then for every $a_{n+1} \in M$, there is an open ball $B\subset N$ such that for every $b_{n+1} \in B$, $(a_1,\dots,a_{n+1})$ and $(b_1,\dots,b_{n+1})$ are $\varepsilon$-isometric.

    In particular, if $a_{n+1} \not \in \{a_1,\dots,a_n\}$ and $1 \leq i_0 \leq n$ minimizes $d(a_{i_0},a_{n+1})$,
    then there is an open set $D \subseteq (0,1)$ such that for any $b_{n+1} \in N$ with $d(b_{i_0},b_{n+1}) \in D$,
    it will also be true that for all $1 \leq i \leq n$, $d(b_{i_0},b_{n+1})\leq d(b_i,b_{n+1})$,
    and $(a_1,\dots,a_{n+1})$ and $(b_1,\dots,b_{n+1})$ will be $\varepsilon$-isometric.
\end{lem}
\begin{proof}
    Let $1 \leq i_0 \leq n$ minimize $d(a_{i_0},a_{n+1})$ (this need not be unique).
    If $a_{n+1} = a_{i_0}$, we may simply let $b_{n+1} = b_{i_0}$.
    Otherwise, let
    $$D = (d(a_{i_0},a_{n+1}) - \varepsilon, d(a_{i_0},a_{n+1}) + \varepsilon) \setminus
    \{d(b_i,b_{i_0}) : 1 \leq i \leq n\}.$$
    This is a nonempty open set, so by the density axioms of \DU{}, there is some $b^* \in N$ with $d(b_{i_0},b^*) \in D$. Let $t$ be strictly smaller than all the distances mentioned in the construction so far, and consider $B=B_t(b^*)\subset N$. Note that for every $b_{n+1}\in B$ we have that $d(b_i,b_{n+1})=d(b_i,b^*)$ for $1\leq i\leq n$ by the ultrametric inequality.

    Now fix $b_{n+1}\in B$. First, we establish that for all $1 \leq i \leq n$, $d(b_{i_0},b_{n+1}) \leq d(b_i,b_{n+1})$.
    By construction, $d(b_{i_0},b_{n+1}) \neq d(b_i,b_{i_0})$, so the ultrametric inequality will be an equality, and
    $$d(b_i,b_{n+1}) = \max\left(d(b_{i_0},b_{n+1}) ,d(b_i,b_{i_0})\right) \geq d(b_{i_0},b_{n+1}).$$

    Thus by Lemma \ref{lem:um_max_closest}, for all $1 \leq i \leq n$,
    we have 
    \begin{align*}
        d(b_i,b_{n+1}) &= \max\left(d(b_i,b_{i_0}),d(b_{i_0},b_{n+1})\right)\\
        d(a_i,a_{n+1}) &= \max\left(d(a_i,a_{i_0}),d(a_{i_0},a_{n+1})\right),
    \end{align*}
    so as
    $$|d(a_i,a_{i_0}) - d(b_i,b_{i_0})|, |d(a_{i_0},a_{n+1}) - d(b_{i_0},b_{n+1})| < \varepsilon,$$
    and the function $\max(\cdot,\cdot)$ is 1-Lipschitz,
    we have
    $|d(a_i,a_{n+1}) - d(b_i,b_{n+1})| < \varepsilon$ as well.
\end{proof}

\begin{thm}\label{thm:du_ec}
    The existentially closed models of \UM{} are precisely the models of \DU{}.
\end{thm}
\begin{proof}
    Let $M \vDash \UM{}$ be an existentially closed model of \UM{}, and consider the quantifier-free formula $\phi(x,y) = |d(x,y) - q|$ for some $q\in [0,1]$.
    By Fact \ref{fact:ThUmax}, $M$ embeds into some $N \vDash \mathrm{Th}(U_S)$, and by Lemma \ref{lem:du_example}, $N \vDash \DU{}$.
    Fix $a \in M$.
    By the axioms of $\DU{}$, $\inf_{y \in N} \phi(a,y) = 0$, so by existential closure, $\inf_{y \in M} \phi(a,y) = 0$ as well, so $M \vDash \DU{}$.

    Conversely, we use Lemma \ref{lem:ec_test} to show that any $M \vDash \DU$ is an existentially closed model of $\UM{}$. Let $N \vDash \UM{}$ extend $M$, and fix $\varepsilon > 0$. We claim that for all $\bar a \in M^m$, $\bar b \in N^n$, there is $\bar c \in M^n$ such that for all $1 \leq i \leq m, 1 \leq j \leq n$, $|d(a_i,b_j) - d(a_i,c_j)| < \varepsilon$,
    and for all $1 \leq i,j \leq n$, $|d(b_i,b_j) - d(c_i,c_j)| < \varepsilon$.
    In other words, we need to find $\bar c$ such that the tuples $(\bar a,\bar b)$ and $(\bar a,\bar c)$ are $\varepsilon$-isometric.
    This follows by induction on the length of $\bar b$. The base case, that $\bar a$ and $\bar a$ are $\varepsilon$-isometric is trivial, and Lemma \ref{lem:delta_isometric_induction} provides the inductive step.
\end{proof}

We finish the model companion proof with the following fact, which is stated for inductive theories as \cite[Prop. 3.7]{rtrees}. For the proof in classical logic, see \cite[Theorem 8.3.6]{hodges}.
\begin{fact}\label{fact:universal_model_companion}
    If $T$ is a universal theory (in continuous logic, axiomatized by $\sup$-formulas), and another theory $T'$ axiomatizes the existentially closed models of $T$, then $T'$ is the model companion of $T$.
\end{fact}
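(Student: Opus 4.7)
Paragraph 1 (the easy conditions): The three defining properties of a model companion are that (i) every model of $T$ embeds into a model of $T'$, (ii) every model of $T'$ embeds into a model of $T$, and (iii) $T'$ is model complete. Condition (ii) is immediate: every model of $T'$ is by definition an existentially closed model of $T$, and in particular a model of $T$. For (i), given $M \vDash T$, I would construct an ec extension $M^* \supseteq M$ by an iterated chain argument, at each step extending into a $T$-extension that realizes a designated existential formula $\inf_{\bar y} \phi(\bar a, \bar y)$ as tightly as possible, looping through all such existentials with parameters in the current stage. Universality of $T$ (axiomatization by $\sup$-sentences) is the crucial ingredient: it guarantees that unions of chains of models of $T$, together with the necessary metric completion, remain models of $T$. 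The resulting transfinite limit is ec by construction, hence a model of $T'$.

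Paragraph 2 (the main obstacle, model completeness): The hard part is (iii). Given any embedding $M \subseteq N$ between models of $T'$, I must show the embedding is elementary. The plan is a Robinson-style back-and-forth chain. Build
\[
M \subseteq N \subseteq M_1 \subseteq N_1 \subseteq M_2 \subseteq N_2 \subseteq \cdots
\]
where $M_{i+1}$ is an ec extension of $N_i$ produced as in Paragraph~1, and symmetrically $N_{i+1}$ is an ec extension of $M_{i+1}$. Let $K$ be the metric completion of the (common) union. By universality of $T$, $K \vDash T$; and the standard lemma that a chain-union of ec models over an inductive theory is itself ec gives $K \vDash T'$.

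Paragraph 3 (finishing elementarity): I would then show that $M \preceq K$ and $N \preceq K$ by induction on formula complexity, which together with $M \subseteq N \subseteq K$ yields $M \preceq N$. Atomic and connective cases follow from the inclusions. For the quantifier step, consider $\psi(\bar x) = \inf_{\bar y} \phi(\bar x, \bar y)$ with $\phi$ of lower complexity. The inequality $\psi^K \leq \psi^M$ is automatic; for the reverse, fix $\varepsilon > 0$ and a near-optimal witness $\bar c \in K$ with $\phi^K(\bar a, \bar c) < \psi^K(\bar a) + \varepsilon$. By cofinality, $\bar c$ lies in some $M_i$ (or $N_i$), and ec of $M$ in the chain together with the inductive hypothesis on $\phi$ allows the near-optimal witness to be pulled back into $M$ with controlled error; the $\sup$ case is dual.

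The principal technical difficulty is the continuous-logic bookkeeping of $\varepsilon$-approximations throughout the iterated chain construction: one must ensure the chain is built densely enough that compounding errors stay bounded uniformly, so that taking $\varepsilon \to 0$ yields exact equality at each formula. This is the point at which the inductive (here, universal) hypothesis on $T$ is applied repeatedly, since we must know at every stage that the union of the chain is still a model of $T$ before invoking ec on the next layer.
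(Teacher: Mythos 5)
The paper itself gives no proof of this fact; it cites \cite[Prop. 3.7]{rtrees} for the continuous setting and \cite[Theorem 8.3.6]{hodges} for the classical one, so there is no in-text argument to compare against. Taken on its own merits, your proposal has a genuine gap in the model-completeness step. The chain you build in Paragraph 2, $M \subseteq N \subseteq M_1 \subseteq N_1 \subseteq \cdots$, only requires each link to be \emph{some} ec extension of the previous term; it is not an elementary chain, so there is no reason the union $K$ should be an elementary extension of $M$ or of $N$, and the formula induction of Paragraph 3 cannot close the quantifier step. Concretely, for $\psi = \inf_{\bar y}\phi$ and $\bar a \in M$, the inductive hypothesis gives $\phi^M(\bar a,\bar b)=\phi^K(\bar a,\bar b)$ for all $\bar b \in M$; to conclude $\psi^K(\bar a) \geq \psi^M(\bar a)$ you must pull a near-optimal witness $\bar c$ from $K$ back into $M$, but existential closedness of $M$ licenses such a pull-back only when $\phi$ is quantifier-free, not for $\phi$ of arbitrary lower complexity. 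The standard repair is Robinson's alternating chain, in which ec is used only as a compactness lemma: from $M$ ec and $M \subseteq N \vDash T$ one shows that the elementary diagram of $M$ together with the atomic diagram of $N$ is (approximately) finitely satisfiable, producing $M_1 \succeq M$ into which $N$ embeds over $M$; iterating this back and forth yields two interleaved \emph{elementary} chains with common union $K$, and then $M \preceq K$, $N \preceq K$, and $M \subseteq N \subseteq K$ give $M \preceq N$.

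There is also a route that avoids chains entirely and that the paper has already set up: it records, as \cite[Prop. 3.5]{rtrees}, that a theory is model complete if and only if every model of it is existentially closed for it. Since every model of $T'$ is ec for $T$ by hypothesis, and since every model of $T'$ is in particular a model of $T$, any $N \vDash T'$ extending such an $M$ is also a model of $T$, so $M$ is automatically ec for $T'$; model completeness of $T'$ follows immediately. Your Paragraph 1 — producing an ec extension of an arbitrary model of $T$ via a transfinite chain, using universality of $T$ to keep unions and metric completions of chains inside $\mathrm{Mod}(T)$ — is correct.
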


This completes our proof that \DU{} is the model companion of \UM{}, which implies that it is equivalent to $\mathrm{Th}(U_S)$.
\begin{cor}\label{cor:du_model_companion}
    The theory \DU{} is complete, eliminates quantifiers, is the model companion of \UM{}, and is equivalent to $\mathrm{Th}(U_S)$ for any countable dense $S \subset (0,1]$.
\end{cor}

\subsection{Ultrametric Linear Orders}
\begin{defn}
    Let \ULO{} be the $\Lleq$-theory of \emph{ultrametric linear orders} consisting of \MLO{} together with the ultrametric axiom \UM{}.
\end{defn}

One advantage of working with ultrametrics is that the language can be simplified:
\begin{lem}\label{lem:ulo_dist_ray}
    If $M$ is an ultrametric linear order, then $d_\leq = r$.
\end{lem}
\begin{proof}
    By the ultrametric inequality, for any $a,b \in M$, $d_\Delta(a,b) = d(a,b)$. Thus by Lemma \ref{lem:dleq_c},
    $$d_\leq(a,b)
    = \begin{cases}
        0 & \textrm{if }a \leq b\\
        d_\Delta(a,b) & \textrm{if }b \leq a
    \end{cases}
    = \begin{cases}
        0 & \textrm{if }a \leq b\\
        d(a,b) & \textrm{if }b \leq a
    \end{cases}
    = r(a,b).$$
\end{proof}

The ultrametric triangle inequality itself interacts with the ordering in a reliable way:
\begin{lem}\label{lem:ultra_ordered_triangle}
    If $M$ is an ultrametric linear order, and $a,b,c \in M$ satisfy $a \leq b \leq c$, then $d(a,c) = \max((d(a,b),d(a,c))$.
\end{lem}
\begin{proof}
    Monotonicity already tells us that $d(a,b),d(b,c) \leq d(a,c)$, so $\max(d(a,b),d(b,c))\leq d(a,c)$.
    Conversely, the ultrametric triangle inequality states that $d(a,c) \leq \max(d(a,b),d(b,c))$, so 
    $d(a,c) = \max(d(a,b),d(b,c))$.
\end{proof}

We can induct on this lemma to produce the following generalization, which makes it straightforward to characterize finite ultrametric linear orders:
\begin{lem}\label{lem:ultra_ordered_sequence}
    If $M$ is an ultrametric linear order, and 
    $a_1 < a_2 < \dots < a_n$ is an increasing sequence in $M$ with $n \geq 2$, then $d(a_1,a_n) = \max_{1 \leq i < n} d(a_i,a_{i + 1})$.
\end{lem}
\begin{proof}
    This is trivial for $n = 2,$ and for $n = 3$, it is Lemma \ref{lem:ultra_ordered_triangle}.
    Given a sequence $a_1 < a_2 < \dots < a_n$ for which this holds, if $a_{n+1} > a_n$, then Lemma \ref{lem:ultra_ordered_triangle} shows that
    \begin{align*}
        d(a_1,a_{n+1}) 
        &= \max(d(a_1,a_n),d(a_n,a_{n+1})\\
        &= \max\left(\max_{1 \leq i < n} d(a_i,a_{i + 1}),d(a_n,a_{n+1})\right)\\
        &=\max_{1 \leq i < n + 1} d(a_i,a_{i + 1}),
    \end{align*}
    using our induction hypothesis to finish the induction step.
\end{proof}

To prove one more property of \ULO{}, useful in understanding its model companion, we will cite the following fact about finite models of \ULO{}. This was shown in the context of classical structural Ramsey theory, without any continuous logic.
\begin{fact}{{\cite[Theorem 2]{vanthe1}}}
    If $S \subset (0,1]$ is countable and dense, then the set of finite ultrametric linear orders with distances in $S$ has the Ramsey property.
\end{fact}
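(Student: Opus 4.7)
The plan is to prove the Ramsey property for the class $\mathcal{K}_S$ of finite ultrametric linear orders with distances in $S$ by encoding each such structure as a finite ordered rooted tree and invoking a Ramsey theorem for trees.

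First, I would observe that since any given pair $A, B \in \mathcal{K}_S$ uses only finitely many distances, it suffices to fix a finite $S_0 = \{s_1 < \dots < s_k\} \subseteq S$ containing all distances appearing in $A$ and $B$ and prove the Ramsey property within the subclass $\mathcal{K}_{S_0}$; the target structure $C$, having distances in $S_0 \subseteq S$, then automatically lies in $\mathcal{K}_S$. Next, I would identify each $(X, d, \leq) \in \mathcal{K}_{S_0}$ with a finite rooted tree of depth $k$ in which all leaves sit at depth $k$: nodes at depth $i$ are the equivalence classes of $X$ under $d(x,y) \leq s_{k-i}$, with the whole of $X$ as the root and the singletons as the leaves. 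Then $d(x,y) = s_j$ holds iff the least common ancestor of $x$ and $y$ sits at depth $k - j$, by Lemma~\ref{lem:ultra_ordered_sequence}. Order-convexity of balls forces $\leq$ to restrict to a linear order on the children of each internal node, and conversely every such ordered level-$k$ tree recovers a unique element of $\mathcal{K}_{S_0}$; embeddings on the ultrametric side correspond exactly to level-preserving, child-order-preserving embeddings on the tree side.

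The Ramsey property for this class of ordered rooted trees of fixed height can be extracted either from a finite version of Milliken's tree theorem or proved directly by a Ne\v set\v ril--R\"odl-style partite amalgamation argument by induction on the depth $k$. The base case $k = 1$ is the classical finite Ramsey theorem for finite linear orders, and the inductive step amalgamates Ramsey witnesses for the subtrees hanging off each child of the root into a single larger tree, using the finite Ramsey theorem once more to control the coloring on the top level. Translating back through the encoding yields the desired $C \in \mathcal{K}_{S_0} \subseteq \mathcal{K}_S$.

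The main obstacle is keeping the tree/ultrametric correspondence intact throughout the induction: the constructed tree must have all leaves at depth exactly $k$ so that the induced metric takes values in $S_0$, and the ordering on the children of every internal node must be maintained so that order-convexity of the balls survives. Once this bookkeeping is in place, the Ramsey statement transfers mechanically from trees to ultrametric linear orders.
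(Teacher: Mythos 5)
The paper does not give a proof here: this is a \emph{Fact} imported verbatim from \cite[Theorem 2]{vanthe1}, so there is no internal argument to compare against. Your sketch, however, is essentially the proof that appears in the cited source. The reduction to a finite distance set $S_0=\{s_1<\dots<s_k\}$ is valid, since for fixed $A,B\in\mathcal K_S$ the set of embeddings of $A$ into any candidate $C$ does not depend on whether one views $C$ inside $\mathcal K_{S_0}$ or $\mathcal K_S$. The correspondence between finite ultrametric linear orders with distances in $S_0$ and finite ordered rooted trees of height $k$ with all leaves at depth $k$ is correct (adopt the convention $s_0=0$ so that the depth-$k$ nodes are singletons), and order-convexity of closed balls does force the children of each internal node to be linearly ordered, with the inverse construction giving the lexicographic order; embeddings on one side correspond exactly to depth-preserving, sibling-order-preserving embeddings on the other.

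The one claim you should not take for granted is that ``a finite version of Milliken's tree theorem'' yields the needed Ramsey statement. Milliken's (and Halpern--L\"auchli's) strong subtrees retain the full branching factor of the ambient tree and are obtained by selecting a level set, which in your encoding moves the induced ultrametric: the distances realized depend on which levels are kept, so they leave $S_0$. What you actually need is a Ramsey theorem for \emph{shape-preserving} ordered subtrees of fixed height --- at every node choose $b$ of the $c$ children, with the choices allowed to differ from node to node at the same depth --- and that is a Ne\v set\v ril--R\"odl type statement, not a corollary of Milliken off the shelf. Your fallback, the partite amalgamation argument, is the standard and correct route; just be aware that the inductive step is rather more involved than the phrase ``using the finite Ramsey theorem once more to control the coloring on the top level'' suggests, since the shapes of embedded subtrees hanging below distinct children of the root must be amalgamated coherently (this is precisely what the partite lemma handles). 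With that repaired, the plan goes through and matches the known proof.
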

Rather than define the Ramsey property itself, we note the following consequence, which follows easily due to \cite[Theorem 4.2]{nesetril2005ramsey}, noting that the hereditary property and joint embedding property are easy to check. This amalgamation property is fundamentally a fact about embeddings, and in both the classical and the continuous logic setting, an embedding between metric linear orders is just an order-preserving isometry. Thus this amalgamation property translates from the classical logic setting of \cite{vanthe1,nesetril2005ramsey} to ours:
\begin{cor}\label{cor:fin_ulo_amalgamation}
    If $S \subset (0,1]$ is countable and dense, then the set of finite ultrametric linear orders with distances in $S$ is an amalgamation class, meaning that for any such structures $M_0,M_1,M_2$ and embeddings $f_1 : M_0 \hookrightarrow M_1$, $f_2 : M_0 \hookrightarrow M_2$, there is a finite ultrametric linear order $N$ with distances in $S$ with embeddings $g_1 : M_1 \hookrightarrow N, g_2 : M_2 \hookrightarrow N$ with $g_1 \circ f_1 = g_2 \circ f_2$.
\end{cor}

We extrapolate from this to amalgamation over substructures:
\begin{lem}\label{lem:ulo_amalgamation}
    The theory \ULO{} has amalgamation over substructures.
\end{lem}
\begin{proof}
    First, we observe that \ULO{} is a universal theory, so its substructures are also models of \ULO{}.

    The model $N$ we seek is precisely a model of \ULO{} that also satisfies the atomic diagrams of the structures $M_0, M_1, M_2$, as well as sentences identifying the constant representing $m \in M_0$ with the constants representing $f_1(m),f_2(m)$. We thus only need to show that this theory is satisfiable, or finitely satisfiable. To show finite satisfiability, it will suffice to reduce to the portion of this theory containing \ULO{} and the sentences pertaining to only finitely many constants representing elements of $M_0, M_1, M_2$. As $\Lleq$ is a relational language, this amounts to assuming that $M_0, M_1, M_2$ are finite.

    To consider finite structures, we turn to Corollary \ref{cor:fin_ulo_amalgamation}.
    For any finite ultrametric linear orders $M_0, M_1, M_2$, let $S \subset (0,1]$ be countable and dense, containing all distances between pairs in $M_0,M_1,M_2$. By Corollary \ref{cor:fin_ulo_amalgamation}, there is some finite ultrametric linear order $N$ with embeddings $g_1 : M_1 \hookrightarrow N, g_2 : M_2 \hookrightarrow N$ with $g_1 \circ f_1 = g_2 \circ f_2$.
\end{proof}

\subsection{Ultrametric Dense Linear Orders}
We now study a theory of \emph{ultrametric dense linear orders}, whose orders will be dense linear orders, and whose metrics will be dense ultrametrics modelling \DU{}.
\begin{defn}
    Let \UDLO{} be the $\Lleq$-theory of \emph{ultrametric dense linear orders} consisting of \ULO{} together with \MDLO{}.
\end{defn}

We then see that the models of \UDLO{} are precisely the ultrametric linear orders $M$ such that for every $a \in M$, the sets $\{d(a,b): b < a\}$ and $\{d(a,b): b > a\}$ are both dense in $[0,1]$.

By ignoring the ordering in the distance sets in Lemma \ref{lem:dense_ordered_distances}, we get this corollary: 
\begin{cor}
    The reduct of any model of \UDLO{} to the empty language satisfies \DU{}.
\end{cor}

To make this theory more concrete, we provide an example, by showing how $U_S \vDash \DU{}$ (see Lemma \ref{lem:du_example} can be expanded to make a model of \UDLO{}. This metric linear order first appears in \cite{vanthe1}.
\begin{lem}
    For countable dense $S \subset (0,1]$, give $U_S$ the ``lexicographical'' order where $f < g$ if and only if when $x$ is maximal such that $f(x) \neq g(x)$, $f(x) < g(x)$.

    Then $U_S \vDash \UDLO$.
\end{lem}
\begin{proof}
    The metric open ball around $f$ of radius $r$ is defined as all $g$ such that for $x \geq r$, $f(x) = g(x)$, which is convex in this lexicographical order.

    For any $f$, the set $\{g \in U_S : f < g\}$ is $S$, because for any $s \in S$, we may simply define $g \in U_S$ such that if $x \neq s$,  then $f(x) = g(x)$, but $g(s) = f(s) + 1$. Then $d(f,g) = \sup\{s\} = s$, while $f < g$. A similar argument shows $\{g \in U_S : g < f\} = S$.
\end{proof}

\begin{lem}\label{lem:order_isometric_induction}
    If $M,N\models \UDLO{}$, $\varepsilon > 0$, $n\in\N$, and $\bar a \in M^n, \bar b \in N^n$ are $\varepsilon$-isometric $n$-tuples with the same order type, then for every $a_{n+1} \in M$, there is an open ball $B\subset N$ such that for any $b_{n+1} \in B$ we have that $(a_1,\dots,a_{n+1})$ and $(b_1,\dots,b_{n+1})$ are $\varepsilon$-isometric with the same order type.

\end{lem}
\begin{proof}
The proof is quite similar to Lemma \ref{lem:delta_isometric_induction}, but we need to preserve the order type. As before, we let $i_0$ be such that $d(a_{i_0},a_{n+1})$ is as small as possible. Without loss of generality, let us assume that $a_{n+1}<a_{i_0}$ and that $i_1$ is such that $a_{i_1}$ is the largest element of $\bar a$ satisfying $a_{i_1}<a_{n+1}<a_{i_0}$. If no such element exists, the construction is easier. Consider now the open set

$$D = ((d(a_{i_0},a_{n+1}) - \varepsilon, d(a_{i_0},a_{n+1}) + \varepsilon) \cap (0, d(a_{i_0},a_{i_1}))\setminus
    \{d(b_i,b_{i_0}) : 1 \leq i \leq n\}.$$

as $a_{i_1}<a_{n+1}<a_{i_0}$, it follows that $d(a_{n+1},a_{i_0})\leq d(a_{i_1},a_{i_0})$, so this interval is nonempty. By \UDLO{}, the set $\{d(b_{i_0},x): x\in N, x < b_{i_0}\}$ is dense in $(0,1)$, so we can find $b^*\in N$ with the approximately correct distance and with the correct order type. We let $B$ be an open ball centered around $b^*$ with radius smaller than all the nonzero distances that are realized between elements of $\bar a$ and $\bar b$.
\end{proof}

\begin{thm}\label{thm:udlo_ec}
    The existentially closed models of \ULO{} are precisely the models of \UDLO{}.
\end{thm}
\begin{proof}
    Suppose that $M$ is an existentially closed model of \ULO{}. To show that $M \vDash \UDLO{}$, we will show that for any $a \in M$ and $q \in [0,1]$,
    that $M \vDash \inf_x|r(x,a) - q| = 0$. (The analogous result for the other axiom follows by the same argument.)
    To do this, we show that there is $N \vDash \ULO$ extending $M$ containing an element $b$ with $r(b,a) = q$ - then $N \vDash \inf_x|r(x,a) - q| = 0$, and so by existential closedness, $M \vDash \inf_x|r(x,a) - q| = 0$.
    We can construct $N$ through Lemma \ref{lem:ulo_amalgamation}, by amalgamating $M$ with the ultrametric linear order $\{a,b\}$ with $a < b$ and $d(a,b) = q$.

    It now suffices to show that for every model $N \vDash \ULO$ with $M \subseteq N$, all $\bar a \in M^m$, $\bar b \in N^n$, all $\varepsilon > 0$, there exists $\bar c \in M^n$ such that for all atomic formulas $\theta(\bar x,\bar y)$,
    $$|\theta(\bar a, \bar b) - \theta(\bar a,\bar b)| < \varepsilon.$$
    Because all atomic formulas may be assumed to be of the form $d(w,v)$ or $r(w,v)$, it suffices to find $\bar c$ such that $(\bar a, \bar b)$ and $(\bar a, \bar c)$ have the same order type and are $\varepsilon$-isometric.

    As in the proof of Theorem \ref{thm:du_ec}, we construct this by induction on the length of $\bar b$. As a base case, we see that $\bar a$ and $\bar b$ have the same order type and are $\varepsilon$-isometric.
    It now suffices to show that if $(\bar a, \bar b)$ and $(\bar a, \bar c)$ have the same order type and are $\varepsilon$-isometric, then for any $b_{n+1} \in N$, there is $c_{n+1} \in M$ such that $(\bar a,\bar b,b_{n+1})$ and $(\bar a, \bar c,c_{n+1})$ have the same order type and are $\varepsilon$-isometric.
    By Lemma \ref{lem:delta_isometric_induction}, for the $\varepsilon$-isometric condition, if $e \in \{a_i : 1 \leq i \leq m\} \cup \{b_j : 1 \leq j \leq n\}$ minimizes $d(e,c_{n+1})$, and $f$ is the corresponding element of the tuple $(\bar a, \bar c)$ (if $e = a_i$, then $f = a_i$, if $e = b_j$, then $f = c_j$), then it suffices to find $c_{n+1}$ such that $d(f,c_{n+1}) \in D$, where $D$ is a nonempty open subset of $[0,1]$. After doing this, $f$ will still be the closest element of $\{a_i : 1 \leq i \leq m\} \cup \{c_j : 1 \leq j \leq n\}$ to $c_{n+1}$.
    By the axioms of \UDLO{}, it is possible to choose $c_{n+1} \in M$ such that $(f,c_{n+1})$ has the same order type as $(e,b_{n+1})$, and $d(f,c_{n+1}) \in D$. It now suffices to check that $(\bar a, \bar b, b_{n+1})$ and $(\bar a, \bar c, c_{n+1})$ have the same order type. As $f$ is the closest element of $\{a_i : 1 \leq i \leq m\} \cup \{c_j : 1 \leq j \leq n\}$ to $c_{n+1}$, $c_{n+1}$ is adjacent to $f$ in the order, and similarly $b_{n+1}$ is adjacent to $e$ in its order. As $c_{n+1}$ is adjacent to $f$ on the correct side, it has been added to the correct gap in the order.
\end{proof}

\begin{thm}\label{thm:udlo_model_companion}
    The theory \UDLO{} is the model companion of \ULO{}, and \UDLO{} has quantifier elimination.
\end{thm}
\begin{proof}
    By Fact \ref{fact:universal_model_companion}, as \ULO{} is universal, Theorem \ref{thm:udlo_ec} implies that \UDLO{} is the model companion of \ULO{}.

    By Lemma \ref{lem:ulo_amalgamation}, $\ULO$ has amalgamation over substructures, so by Fact \ref{fact:model_companion_qe}, $\UDLO$ has quantifier elimination.
\end{proof}

Now that we have quantifier elimination,
we can show that $\mathrm{acl}$ is trivial in $\UDLO$.

\begin{lem}\label{lem:locally_constant}
    Let $M \vDash \UDLO{}$, $A \subseteq M$, and $\phi(x)$ an $A$-predicate.

    If $b,c \in M$ are such that $d(b,c) < d(b,A)$, then $\phi(b) = \phi(c)$.
    
    Consequently, if $b \in M$ is not in the metric closure of $A$, then $\phi(x)$ is constant on the open ball of radius $d(b,A)$ around $b$.
\end{lem}
\begin{proof}
    By quantifier elimination, we can express $\phi(x)$ as a continuous combination of countably many $A$-definable atomic formulas $\psi_n(x)$. If each of these satisfy $\psi_n(b) = \psi_n(c)$, then $\phi(b) = \phi(c)$ as well, so we may focus on an $A$-definable atomic formula $\psi(x)$.

    Any such formula takes one of the forms $d(x,a),r(x,a),r(a,x)$ for some $a \in A$.
    As $d(b,c) < d(b,a)$, the ultrametric property tells us that $d(a,b) = d(a,c)$. For the other predicates, we observe that the points $b$ and $c$ are on the same side of $a$ in the order. Thus either $r(b,a) = 0 = r(c,a)$, or $r(b,a) = d(b,a) = d(c,a) = r(c,a)$, and similarly for $r(a,x)$.
\end{proof}

\begin{thm}
    If $M \vDash \UDLO{}$ and $A \subseteq M$, then $\mathrm{acl}(A)$ is the metric closure of $A$.
\end{thm}
\begin{proof}
    Suppose $b \in \mathrm{acl}(A)$. 
    Then $b$ is in some compact set with $A$-definable distance predicate $\phi(x)$.

    If $b$ is not in the metric closure of $A$, then $d(b,A) > 0$. By Lemma \ref{lem:locally_constant}, $\phi(x)$ is constant on the the open ball defined by $d(x,b) < d(b,A)$. As the set defined by $\phi(x) = 0$ contains this entire open ball, it is not compact, a contradiction.
    (Any open ball in $\UDLO{}$ has infinite $\varepsilon$-covering number for any $\varepsilon$ smaller than the radius of the ball.)
    
    Meanwhile, the metric closure of a set in any metric structure is contained in its definable closure, and thus its algebraic closure (\cite[Exercise 10.10]{mtfms}).
\end{proof}

\section{Approximate Categoricity and Type Spaces}\label{sec:cat}

The aim of this section is to show that $\DU$ and $\UDLO$ satisfy a weak notion of separable categoricity developed by Hanson in \cite{hansonIso} and \cite{hansonCat} using the notion of distortion systems. Although this notion of categoricity does not necessarily give rise to nice invertible maps between separable models of $\DU$ and $\UDLO$, it is strong enough to proof a Ryll-Nardzewski-like result on the type space. This first subsection recall the necessary definitions, but we refer the reader to  \cite{hansonIso} and \cite{hansonCat} for the full details.

\begin{defn}
\begin{enumerate}
\item (\cite[Definition 2.4]{hansonIso}) Given a set of formulas $\Delta$, and fixing a continuous theory $T$, $\overline\Delta$ is the closure of $\Delta$ under renaming variables, quantification, 1-Lipschitz
connectives, logical equivalence modulo T, and uniform limits.
\item (\cite[Definition 2.6]{hansonIso}) Given $n\in\N$, a collection of formulas $\Delta$ separates distinct $n$-types $p$ and $q$ if there is a formula $\phi\in \Delta$ such that $\phi(p)\neq\phi(q)$ (after possibly renaming the variables in $\phi$).
\item (\cite[Definition 2.8]{hansonIso}) A set of formulas $\Delta$ is a \textit{distortion system} for a continuous theory $T$ if it separates any two distinct $n$-types for every $n\in\N$, and $\overline\Delta=\Delta$.
\end{enumerate}
\end{defn}

\begin{fact}[{\cite[Proposition 2.10]{hansonIso}}] 
If $\Delta$ separates any two distinct quantifier free $n$-types for every $n\in\N$, then $\overline\Delta$ is a distortion system.
\end{fact}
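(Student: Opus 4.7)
The plan is to verify the two requirements in the definition of distortion system for $\overline{\Delta}$: that $\overline{\overline{\Delta}} = \overline{\Delta}$, and that $\overline{\Delta}$ separates distinct $n$-types for every $n$. The first is immediate, since reapplying renaming of variables, 1-Lipschitz connectives, quantification, logical equivalence modulo $T$, and uniform limits to members of $\overline{\Delta}$ yields nothing new.

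For the separation requirement, my strategy is to show the stronger statement that $\overline{\Delta}$ contains (representatives of) every definable predicate up to $T$-equivalence, which suffices since distinct types are already separated by some definable predicate. I would proceed in two stages. First, I would show that every quantifier-free definable predicate lies in $\overline{\Delta}$: the quantifier-free $n$-type space is a compact Hausdorff space, and by hypothesis the restrictions of members of $\Delta$ form a family of continuous $[0,1]$-valued functions on it that separates points. Since $\overline{\Delta}$ is closed under $\max$, $\min$, constants and truncated shifts $x \mapsto \max(x-c,0)$ (all of which are 1-Lipschitz $[0,1]^n \to [0,1]$ connectives) together with uniform limits, a lattice-style Stone--Weierstrass argument shows that the restriction of $\overline{\Delta}$ to the quantifier-free type space is all of the continuous $[0,1]$-valued functions on it, so every quantifier-free definable predicate lies in $\overline{\Delta}$ up to logical equivalence.

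Second, I would promote this to arbitrary definable predicates by induction on the quantifier depth of a defining formula: each $\inf_{\bar y}\psi$ or $\sup_{\bar y}\psi$ of a predicate $\psi$ already in $\overline{\Delta}$ is again in $\overline{\Delta}$ by closure under quantification, and subsequent 1-Lipschitz combinations plus uniform limits capture every formula, and then every definable predicate. Hence whenever $p \neq q$ are distinct $n$-types and a definable predicate $\phi$ witnesses $\phi(p) \neq \phi(q)$, there is an equivalent (and equally distinguishing) member of $\overline{\Delta}$.

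The main obstacle I anticipate is the Stone--Weierstrass step under the 1-Lipschitz restriction: the standard formulation freely uses arbitrary continuous connectives, whereas here we must make do with 1-Lipschitz ones. However, the lattice version of Stone--Weierstrass only genuinely needs $\max$, $\min$, constants and truncated translations, all of which are 1-Lipschitz, so the classical argument adapts without friction. The remaining inductive step is then a routine application of closure under quantification and uniform limit.
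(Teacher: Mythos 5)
The Stone--Weierstrass step does not survive the $1$-Lipschitz restriction, and this is fatal to your strategy. As a counterexample, take $X = [0,1]$ and let $L$ be the set of $1$-Lipschitz maps $X \to [0,1]$: this $L$ is a lattice, contains the constants and the truncated shifts $t \mapsto \max(t - c, 0)$, is closed under uniform limits, and separates points of $X$ (the identity is in $L$), yet $L$ omits, say, $t \mapsto \min(2t,1)$. The lattice (Kakutani--Krein) version of Stone--Weierstrass needs \emph{two-point interpolation} -- for each pair $x \neq y$ and target values $a,b$ a function $g$ in the lattice with $g(x)=a$, $g(y)=b$ -- and mere point-separation does not deliver this under your restricted operations, because passing from a separating $h$ with $|h(x)-h(y)|$ possibly tiny to an interpolating $g$ with $|g(x)-g(y)|=1$ would require amplification by $|h(x)-h(y)|^{-1}$, which $1$-Lipschitz connectives cannot provide, and uniform limits of $1$-Lipschitz functions are still $1$-Lipschitz. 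The stronger claim you aim for, that $\overline\Delta$ contains every definable predicate up to $T$-equivalence, is in fact false in general and would trivialize the framework: by Fact~\ref{easyClosure}, $dis_\Delta(R) = dis_{\overline\Delta}(R)$ for every correlation $R$, so $\overline\Delta$ cannot contain any predicate whose distortion along some $R$ exceeds that of $\Delta$; for instance $\overline{GH}$ for $\DU$ is deliberately far smaller than the set of all definable predicates.

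The route that works does not pass through density; it produces a separating formula in $\overline\Delta$ directly, using compactness at the quantifier-free level. If $p \neq q$ agree on quantifier-free formulas but disagree at $\inf_{\bar y}\theta(\bar x,\bar y)$ with $\theta$ quantifier-free (the general case being an induction on quantifier depth), realize them by $\bar a, \bar b$ and choose a witness $\bar c$ with $\theta(\bar a,\bar c) < t \leq \inf_{\bar y}\theta(\bar b,\bar y)$. Then $\mathrm{qftp}(\bar a\bar c)$ lies outside the closed set $F = \{ r : \theta(r) \geq t\}$ in the compact space $S^{\mathrm{qf}}_{n+|\bar y|}(T)$, and since $\Delta$ separates points there, the $\Delta$-induced weak topology coincides with the given one; so finitely many $\phi_1,\dots,\phi_k \in \Delta$ and a $\delta > 0$ satisfy: $\max_i|\phi_i(r)-\phi_i(\bar a\bar c)| < \delta$ implies $r \notin F$. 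The predicate $\chi(\bar x) = \inf_{\bar y}\max_i|\phi_i(\bar x,\bar y) - c_i|$, with $c_i = \phi_i(\bar a,\bar c)$ treated as constants, uses only $1$-Lipschitz connectives and one quantifier, so $\chi \in \overline\Delta$, and $\chi(p) = 0$ while $\chi(q) \geq \delta$. This gives separation without ever requiring $\overline\Delta$ to capture all definable predicates. (Note the paper itself cites this fact to Hanson and gives no proof, so the comparison here is with the obstacle in your argument rather than a proof in the paper.)
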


We can then use distortion systems to measure how distinct two models of a theory are: 

\begin{defn} Let $\Delta$ be a set of formulas in a metric language $\mathcal{L}$ with a set of sorts $\mathcal{S}$:
\begin{enumerate}
\item A correlation between metric structures $M$ and $N$ is a set $R$ such that $R\upharpoonright s \subset s(M)\times s(N)$ is a total surjective relation for every sort $s\in\mathcal{S}$.
\item An almost correlation  between metric structures $M$ and $N$ is a correlation between dense subsets of $M$ and $N$.
\item Given $\bar m \in M$ and  $\bar n \in N$, $cor(M,\bar m;N,\bar n)$ is the set of all correlations between $M$ and $N$ such that $(\bar m,\bar n)\in R$. Similarly, $acor(M,\bar m;N,\bar n)$ is the set of all almost correlations between $M$ and $N$ such that $(\bar m,\bar n)\in R$.
\item If $R\in cor(M;N)$, then the distortion of $R$ according to $\Delta$ is 
$$dis_\Delta(R) = \sup\{|\phi^M(\bar m)-\phi^N(\bar n)|\mid (\bar m,\bar n)\in R, \phi\in \Delta].$$
The same holds if  $R\in acor(M;N)$.
\item $\rho_\Delta(M,\bar m;N,\bar n)= \inf\{dis_\Delta(R)\mid R\in cor(M,\bar m;N,\bar n)\}$.
\item $a_\Delta(M,\bar m;N,\bar n)= \inf\{dis_\Delta(R)\mid R\in acor(M,\bar m;N,\bar n)\}$.
\end{enumerate}
\end{defn}

The next few facts give us basic results about the distortions systems and the distortion of a correlation that we will use in the next subsection: 
\begin{fact}[{\cite[Proposition 2.5]{hansonIso}}]\label{easyClosure}
For any $M,N\models T$ and $\bar m\in M$, $\bar n\in N$, $dis_\Delta(R) = dis_{\overline{\Delta}}(R)$
\end{fact}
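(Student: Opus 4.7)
The plan is to prove the two inequalities separately. The direction $dis_\Delta(R) \leq dis_{\overline{\Delta}}(R)$ is immediate from the inclusion $\Delta \subseteq \overline{\Delta}$, so the content lies in showing that enlarging $\Delta$ to its closure does not increase the distortion. The cleanest approach is to fix $\delta = dis_\Delta(R)$ and work with the collection
\[
\Gamma = \{\phi : \text{for all } (\bar m, \bar n) \in R,\ |\phi^M(\bar m) - \phi^N(\bar n)| \leq \delta\}.
\]
Then $\Delta \subseteq \Gamma$ holds by definition, so it suffices to show that $\Gamma$ is closed under each of the five operations generating $\overline{\Delta}$, which forces $\overline{\Delta} \subseteq \Gamma$ and yields $dis_{\overline{\Delta}}(R) \leq \delta$.

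I would then verify the closure properties one at a time. Renaming variables is purely notational, since the distortion is a supremum over all tuples of appropriate arity in $R$. Logical equivalence modulo $T$ is trivial because $M, N \models T$ means the interpretations of equivalent formulas agree on the nose. For a $1$-Lipschitz connective $u$ applied to $\phi_1, \ldots, \phi_k \in \Gamma$, the estimate
\[
|u(\phi_1, \ldots, \phi_k)^M(\bar m) - u(\phi_1, \ldots, \phi_k)^N(\bar n)| \leq \max_i |\phi_i^M(\bar m) - \phi_i^N(\bar n)| \leq \delta
\]
is immediate from the sup-norm convention for Lipschitz constants on connectives standard in this framework. Uniform limits are handled by a routine $\varepsilon/2$ argument: if $\phi_n \to \phi$ uniformly and each $\phi_n \in \Gamma$, pick $n$ so that $|\phi - \phi_n|$ is below $\varepsilon$ pointwise on both $M$ and $N$, apply the triangle inequality to obtain $|\phi^M(\bar m) - \phi^N(\bar n)| \leq \delta + 2\varepsilon$, and let $\varepsilon \to 0$.

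The substantive step, and the only one that genuinely uses the structure of $R$, is quantification. Given $\phi(\bar x, y) \in \Gamma$, I want $\psi(\bar x) := \inf_y \phi(\bar x, y) \in \Gamma$. Fix $(\bar m, \bar n) \in R$ and $\varepsilon > 0$, and choose $y \in M$ with $\phi^M(\bar m, y) < \psi^M(\bar m) + \varepsilon$. Since $R$ is total on the appropriate sort, there is $y' \in N$ with $(y, y') \in R$, hence $(\bar m y, \bar n y') \in R$, and so
\[
\psi^N(\bar n) \leq \phi^N(\bar n, y') \leq \phi^M(\bar m, y) + \delta < \psi^M(\bar m) + \varepsilon + \delta.
\]
The reverse inequality follows symmetrically from surjectivity of $R$ onto $N$, and letting $\varepsilon \to 0$ places $\psi$ in $\Gamma$; the case of $\sup_y$ is entirely analogous.

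The main obstacle I expect is precisely this quantification step: it is the only place where the total-surjectivity built into the definition of a correlation is essential, and without both directions of the correlation one can only bound $\psi^N$ from above by $\psi^M$ or vice versa, not establish equality up to $\delta$. All other closure properties are purely algebraic or analytic and are insensitive to the graph-theoretic shape of $R$, so once the quantifier case is in hand the result assembles itself by induction on the construction of $\overline{\Delta}$.
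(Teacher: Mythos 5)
Your proof is correct; the natural approach is exactly this induction on the generating operations of $\overline\Delta$, and your identification of the quantifier case as the only step that genuinely uses total-surjectivity of the correlation is the right diagnosis. The paper cites this fact from Hanson without reproducing a proof, but what you have written is the argument one would expect, and the only point worth flagging is that the $1$-Lipschitz connective step silently relies on the $\ell^\infty$ metric on $[0,1]^n$ (which is indeed Hanson's convention) — under an $\ell^1$ convention the distortion could grow by a factor of the arity.
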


\begin{fact}
Let $\Delta$ be a distortion system for $T$. Then:
\begin{enumerate}
\item{\cite[After Definition 1.5]{hansonCat}} If $A\subset M$ is some set of parameters, $\Delta(A)=\{\phi(\bar x, \bar a)\mid \bar a\in A\}$ is a distortion system for $T(A)$
\item{\cite[Proposition 1.7]{hansonCat}} For any set of fresh constant symbols $C$, let $D_0(\Delta, C) = \Delta \cup \{d(x,c)\}_{c\in C}$ and $D(\Delta,C)$ its closure. Then, $D(\Delta,C)$ is a distortion system for $T(C)$
\end{enumerate}
\end{fact}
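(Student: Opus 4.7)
The plan is to treat each part separately, in each case verifying the two defining properties of a distortion system: closure under the $\overline{\cdot}$-operation, and separation of distinct $n$-types for every $n$. The closure requirement will turn out to be either automatic or routine, so the real content is in the separation arguments.

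For part (1), the identity $\overline{\Delta(A)} = \Delta(A)$ should follow because every closure operation available in $T(A)$ --- renaming variables, applying 1-Lipschitz connectives, quantifying, passing to logical equivalents modulo $T(A)$, and taking uniform limits --- acts on an $L(A)$-formula $\phi(\bar x, \bar a)$ just by performing the corresponding operation on $\phi(\bar x, \bar y)$ in $L$ with $\bar a$ treated as the values of fresh free variables, which by hypothesis keeps us inside $\Delta$. For separation, I would take two distinct $n$-types $p(\bar x), q(\bar x)$ over $A$ in $T(A)$ and let $\phi(\bar x, \bar a)$ be an $L(A)$-formula on which they disagree, with $\phi$ an $L$-formula and $\bar a$ a finite tuple from $A$. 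Then the types $\tilde p(\bar x, \bar y) = p \cup \mathrm{tp}(\bar a)$ and $\tilde q(\bar x, \bar y) = q \cup \mathrm{tp}(\bar a)$ in $T$ are distinct because $\phi(\bar x, \bar y)$ separates them, so by hypothesis some $\psi(\bar x, \bar y) \in \Delta$ also separates them; then $\psi(\bar x, \bar a) \in \Delta(A)$ separates $p$ and $q$.

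For part (2), the closure equality $\overline{D(\Delta, C)} = D(\Delta, C)$ is built into the definition $D(\Delta, C) = \overline{D_0(\Delta, C)}$ by idempotence of the closure. The separation strategy is to reduce to part (1) applied with the parameter set $A = \{c^M : c \in C\}$ inside a given model $M \models T(C)$. The key lemma to establish is that whenever $\phi(\bar x, \bar y) \in \Delta$ and $\bar c$ is a tuple from $C$, the substituted formula $\phi(\bar x, \bar c)$ lies in $D(\Delta, C)$. Once this is available, a distinguishing $L(A)$-formula from part (1) can be converted into one inside $D(\Delta, C)$, and separation follows.

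The main obstacle is this substitution lemma, because substitution of constants is not itself one of the listed closure operations. The idea is to use the predicates $d(y_i, c_i)$ (obtained from $d(x, c_i) \in D_0(\Delta, C)$ by renaming) as penalty terms: one approximates $\phi(\bar x, \bar c)$ uniformly by formulas of the form $\inf_{\bar y}\bigl(\max\bigl(\phi(\bar x, \bar y),\, P_n(d(y_1,c_1), \dots, d(y_k,c_k))\bigr)\bigr)$, where $P_n$ is a sequence of 1-Lipschitz penalty functions that vanish at the origin and grow toward $1$ at rate $n$ away from it. Because $\phi$ has a fixed uniform continuity modulus, tightening this penalty forces $\bar y \to \bar c$ in the infimum and the approximants converge uniformly to $\phi(\bar x, \bar c)$. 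Designing $P_n$ to stay within the 1-Lipschitz-connective constraint while still making the approximation error vanish uniformly is the delicate piece; once that is in hand, everything else reduces to bookkeeping on top of part (1).
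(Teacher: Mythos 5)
This statement is cited directly from Hanson's papers as a black box; the present paper does not supply a proof, so there is nothing to compare your argument against, and the review below is purely on the merits of your proposal.

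Part (1) is essentially sound. The separation argument works once phrased precisely: realize $p,q\in S_n(T(A))$ by $\bar m,\bar n$ in models containing $A$, lift to the $T$-types of $\bar m\bar a$ and $\bar n\bar a$ (for the finite $\bar a\subseteq A$ appearing in some $L(A)$-formula separating $p$ from $q$), apply the hypothesis to get $\chi\in\Delta$ separating the lifted types, and plug $\bar a$ back in. The closure argument is a little more delicate than you indicate --- ``logical equivalence modulo $T(A)$'' is coarser than equivalence modulo $T$, so it is not automatic that an operation in $\overline{\cdot}$ relative to $T(A)$ is mirrored by the same operation on the preimage relative to $T$ --- but this is a detail, not a conceptual difficulty.

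Part (2) has a genuine gap, and it is exactly at the point you flagged as ``delicate.'' The substitution lemma you aim for, namely that $\phi(\bar x,\bar c)\in D(\Delta,C)$ for every $\phi\in\Delta$, is not provable by the penalty construction, because the only connectives allowed in forming $D(\Delta,C)$ are $1$-Lipschitz. Any penalty $P_n$ built from $d(y_i,c_i)$ by $1$-Lipschitz connectives and uniform limits (which preserve $1$-Lipschitzness) satisfies $P_n(\bar d)\leq\max_i d(y_i,c_i)$, so it cannot ``grow at rate~$n$.'' Concretely, if there is some $\bar y_0$ with $\phi(\bar x,\bar y_0)$ small and $\max_i d((y_0)_i,c_i)=1/2$, then the penalized infimum is $\leq 1/2$ for every $n$, independently of the value $\phi(\bar x,\bar c)$; the approximants do not converge uniformly to $\phi(\bar x,\bar c)$. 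The fix is to give up on approximating $\phi(\bar x,\bar c)$ itself and prove separation directly. Given the separating $\chi\in\Delta$ on the lifted types $\tilde p,\tilde q\in S_{n+|\bar c|}(T)$, apply a $1$-Lipschitz connective (a truncation $t\mapsto t\dot-r$) so that $\chi(\tilde p)=0<\chi(\tilde q)$. Then
$$\psi(\bar x)\ :=\ \inf_{\bar y}\max\bigl(\chi(\bar x,\bar y),\ \max_i d(y_i,c_i)\bigr)$$
belongs to $D(\Delta,C)$, takes value $0$ at $p$, and is strictly positive at $q$: by the fixed modulus of uniform continuity of $\chi$, there is $\delta>0$ such that $\max_i d(y_i,c_i^N)<\delta$ forces $\chi(\bar n,\bar y)$ to stay near $\chi(\tilde q)>0$, so $\psi(q)\geq\min(\chi(\tilde q)/2,\delta)>0$. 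The $1$-Lipschitz penalty is enough once one of the two sides is pinned at $0$; the stronger substitution lemma is both unnecessary and out of reach.
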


\begin{remark}\label{simplify}
Let $\Delta$ be a distortion system for $T$, and $R\in cor(M,N)$ for $M,N\models T$ both containing a set of parameters $A$. Then
$$ dis_\Delta(R)\leq dis_{\Delta(A)}(R) \leq dis_{D_0(\Delta(A), C)}(R) = dis_{D(\Delta(A), C)}(R)$$
\end{remark}

\begin{fact}[{\cite[Lemma 1.12]{hansonIso}}]
Let $\Delta$ be a distortion system for $T$. For every predicate symbol $P\in\mathcal{L}$ and every $\varepsilon>0$ there is a $\delta>0$ such that $\rho_\Delta(M,\bar m;N,\bar n)<\delta$ implies $|P^M(\bar m)-P^N(\bar n)|<\varepsilon$. 
\end{fact}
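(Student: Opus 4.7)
The plan is to reduce the statement to a compactness argument on the space $S_n(T)$ of complete $n$-types over $T$, where $n$ is the arity of $P$. Since $S_n(T)$ is compact Hausdorff in the logic topology, every formula---including $P$ itself and every $\phi \in \Delta$ after renaming or specializing its variables to arity $n$---induces a continuous real-valued function on $S_n(T)$. The whole problem therefore reduces to a uniform continuity statement living purely inside this type space.

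Concretely, I would introduce the pseudometric $d_\Delta(p,q) = \sup\{|\phi(p)-\phi(q)| : \phi \in \Delta\}$ on $S_n(T)$. The hypothesis that $\Delta$ separates $n$-types says exactly that $d_\Delta(p,q) = 0$ iff $p = q$. I then claim that for every $\varepsilon > 0$ there is $\delta > 0$ such that $d_\Delta(p,q) < \delta$ implies $|P(p) - P(q)| < \varepsilon$. Suppose not; then there exist $(p_k,q_k) \in S_n(T)^2$ with $d_\Delta(p_k,q_k) < 1/k$ and $|P(p_k) - P(q_k)| \geq \varepsilon$. By compactness of $S_n(T)^2$ pass to convergent subnets $p_k \to p$, $q_k \to q$. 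Continuity of $P$ gives $|P(p)-P(q)| \geq \varepsilon$, while for each fixed $\phi \in \Delta$ continuity of $\phi$ and the bound $|\phi(p_k)-\phi(q_k)| < 1/k$ force $\phi(p) = \phi(q)$. Since this holds for all $\phi$, separation yields $p = q$, contradicting $|P(p)-P(q)| \geq \varepsilon$.

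To translate this back to the structural statement, take $\delta$ as above, suppose $\rho_\Delta(M,\bar m; N,\bar n) < \delta$, and pick a correlation $R \in cor(M,\bar m;N,\bar n)$ with $dis_\Delta(R) < \delta$. Since $(\bar m, \bar n) \in R$, the definition of $dis_\Delta$ yields $|\phi^M(\bar m) - \phi^N(\bar n)| < \delta$ for every suitably arity-matched $\phi \in \Delta$. This is precisely the assertion that $d_\Delta(\mathrm{tp}^M(\bar m), \mathrm{tp}^N(\bar n)) \leq \delta$, and the previous paragraph then gives $|P^M(\bar m) - P^N(\bar n)| < \varepsilon$, as desired.

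The main obstacle I anticipate is the arity bookkeeping: the elements of $\Delta$ carry various arities, whereas $P$ has the single arity $n$, so one must invoke the closure of $\overline{\Delta}$ under variable renaming (and implicit padding by dummy variables) to obtain a family of functions on $S_n(T)$ that actually separates points there. Once this setup is pinned down, both the compactness argument on the type space and the passage from $\rho_\Delta$ on structures to $d_\Delta$ on types are routine, and no ultraproduct construction is needed.
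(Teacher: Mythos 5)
The paper does not prove this statement; it quotes it as a Fact from \cite[Lemma 1.12]{hansonIso} and supplies no argument, so there is no in-paper proof to compare against. Judged on its own merits, your proposal is correct and gives a clean, self-contained proof by the standard strategy for uniform-continuity statements in continuous logic: push everything down to the compact Hausdorff type space $S_n(T)$, where both $P$ and every (arity-matched, variable-renamed) member of $\Delta$ become continuous real-valued functions, and the separation property of a distortion system is precisely the statement that the pseudometric $d_\Delta$ is point-separating on $S_n(T)$. The compactness-plus-contradiction argument then yields the desired uniform $\delta$, and the translation from $\rho_\Delta(M,\bar m;N,\bar n)<\delta$ to $d_\Delta(\operatorname{tp}(\bar m),\operatorname{tp}(\bar n))\le\delta$ is immediate from a witnessing correlation containing the pair $(\bar m,\bar n)$. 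Two small points worth tightening in a final write-up: (i) emphasize that $\overline{\Delta}$ being closed under uniform limits is what guarantees every $\phi\in\Delta$ of arity $n$ really is continuous on $S_n(T)$, since $\Delta$ may contain definable predicates rather than formulas; and (ii) when passing to a subnet of $(p_k,q_k)$, note explicitly that the index function is cofinal in $\mathbb{N}$, so the bounds $|\phi(p_k)-\phi(q_k)|<1/k$ do pass to the limit. Neither affects the soundness of the argument. Your anticipated worry about arity bookkeeping is already absorbed into the definitions: separation is defined up to variable renaming, and $\overline\Delta$ is closed under renaming, so only the arity-$n$ slice of $\Delta$ ever enters the argument on $S_n(T)$.
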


We are now ready to state Hanson's notion of categoricity: 
\begin{defn}[{\cite[Definition 3.1]{hansonCat}}]
Fix a continuous theory $T$ and a distortion system $\Delta$ for $T$:
\begin{enumerate}
\item $T$ is $\Delta$-$\kappa$-categorical if $a_\Delta(M;N)=0$ for any two $M,N\models T$ of density character $\kappa$.
\item $T$ is strongly $\Delta$-$\kappa$-categorical if $\rho_\Delta(M;N)=0$ for any two $M,N\models T$ of density character $\kappa$.
\end{enumerate}
\end{defn}

Although this notion of cateogricty does not necessarily imply the existence of nice invertible maps, it does have nice consequences for the type space:

\begin{defn}[{\cite[Definition 2.18]{hansonIso}}]
Let $\Delta$ be a distortion system for $T$. For each $\lambda$ and types $p,q\in S_\lambda(T)$, we can define the distance
$$\delta^\lambda_\Delta(p,q) = \inf \{ \rho_\Delta(M,\bar m;N,\bar n) \mid \bar m\models p, \bar n\models q\}$$
\end{defn}

\begin{defn}[{\cite[Definition 1.8]{hansonCat}}]
Let $\Delta$ be a distortion system for $T$, $A$ a set of parameters in some model of $T$, and $C$ a set of fresh new constants. For each $\lambda$ and types $p,q\in S_\lambda(A)$, let
$$d_{\Delta,A}(p,q) = \delta^0_{D(\Delta(A),\bar c)}(p(\bar c,A), q(\bar c,A)).$$
Where we drop $A$ if it is empty.
\end{defn}

Calculating the value of $d_{\Delta,A}(p,q)$ can be a difficult task. The following lemma gives us a tool for such a task: 
\begin{fact}[{\cite[Proposition 1.10]{hansonCat}}]
Let $T$ be a complete theory, let $\Delta$ be a distortion system for $T$.
\begin{enumerate}
\item For every $\varepsilon>0$ there is a $\delta>0$ such that if \begin{itemize}
\item there are models $M,N\models T$ both containing a set of parameters $A$, with tuples $\bar m\in M$ and $\bar n,\bar b\in N$ such that $\bar m\models p$ and $\bar n\models q$, and an $R\in cor(M,\bar m;N,\bar b)$ with $dis_\Delta(R)\leq \delta$ and $d^N(\bar n,\bar b)\leq \delta$.
\end{itemize}
Then $d_{\Delta,A}(p,q)\leq \varepsilon$.
\item For any set of parameters $A$, if $d_{\Delta,A}(p,q)\leq \varepsilon$ then there are models $M,N\models T$ both containing $A$, with tuples $\bar m\in M$ and $\bar n,\bar b\in N$ such that $\bar m\models p$ and $\bar n\models q$, and an $R\in cor(M,\bar m;N,\bar b)$ with $dis_\Delta(R)\leq \varepsilon$ and $d^N(\bar n,\bar b)\leq \varepsilon$.
\end{enumerate}
\end{fact}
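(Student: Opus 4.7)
The plan is to treat both parts as two directions of unpacking the definition
$$d_{\Delta, A}(p, q) = \delta^0_{D(\Delta(A), \bar c)}(p(\bar c, A), q(\bar c, A)).$$
The crucial observation is that each predicate $d(x, c_i)$ lies in $D_0(\Delta(A), \bar c)$, so any correlation with small distortion in $D(\Delta(A), \bar c)$ automatically forces the element correlated to the interpretation $c_i^M = m_i$ to be metrically close to $c_i^N = n_i$. This is the bridge between the two pieces of data in the statement: the distortion bound on $R$ and the metric bound $d^N(\bar n, \bar b) \leq \varepsilon$.

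Part 2 is a direct unpacking of the infimum. Given $d_{\Delta,A}(p,q) \leq \varepsilon$ and any $\eta > 0$, there exist $M, N \models T$ both containing $A$, interpretations $\bar m = \bar c^M \models p$ and $\bar n = \bar c^N \models q$, and $R \in cor(M; N)$ with $dis_{D(\Delta(A), \bar c)}(R) < \varepsilon + \eta$. Totality of $R$ on each sort produces $\bar b \in N$ with $(m_i, b_i) \in R$ for each $i$; applying the formula $d(x, c_i) \in D_0(\Delta(A), \bar c)$ to this pair gives $d^N(b_i, n_i) < \varepsilon + \eta$, and since $\Delta \subseteq D(\Delta(A), \bar c)$ we also have $dis_\Delta(R) < \varepsilon + \eta$. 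A standard ultraproduct of such witnesses as $\eta \to 0$ then yields models, tuples, and a correlation achieving the exact bound $\leq \varepsilon$.

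For part 1, I would take the given $R \in cor(M, \bar m; N, \bar b)$, reinterpret $\bar c$ as $\bar m$ in $M$ and as $\bar n$ in $N$, and check that $R$ (augmented by the diagonal pairs $(a, a)$ for $a \in A$ to respect the common substructure) has small $D(\Delta(A), \bar c)$-distortion. For any pair $(x^M, x^N) \in R$ and any predicate $d(x, c_i)$, the triangle inequality gives
$$|d^M(x^M, m_i) - d^N(x^N, n_i)| \leq |d^M(x^M, m_i) - d^N(x^N, b_i)| + d^N(b_i, n_i),$$
whose second summand is bounded by $\delta$ by hypothesis; the first is controlled by $dis_\Delta(R) \leq \delta$ via \cite[Lemma 1.12]{hansonIso} applied to the atomic predicate $d$, producing a uniform modulus $\delta_1(\delta) \to 0$ as $\delta \to 0$. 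The $\Delta(A)$-part of $D(\Delta(A), \bar c)$ is handled the same way, using Remark \ref{simplify} once $R$ respects the diagonal on $A$, and Fact \ref{easyClosure} then transfers control from the generating set $D_0$ to the whole closure $D$. Choosing $\delta$ small enough that $\delta + \delta_1(\delta) \leq \varepsilon$ gives the desired conclusion.

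The main obstacle is the uniformity of $\delta = \delta(\varepsilon)$ required by part 1: a single $\delta$ must work for all models, parameters, and types simultaneously, not just at each point. This is precisely what \cite[Lemma 1.12]{hansonIso} delivers, extracting a modulus of continuity for each predicate from the separation property of the distortion system; its proof is itself a compactness argument on the $\rho_\Delta$-metric topology of the type space, and without that lemma one would need to reprove this uniform continuity from scratch.
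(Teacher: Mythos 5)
The paper doesn't prove this statement: it is quoted as a \texttt{fact} citing \cite[Proposition~1.10]{hansonCat}, so there is no proof in the source to compare against. Your sketch supplies what the citation skips, so I'll evaluate it on its own terms.

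Your treatment of part~2 is sound: unfolding $d_{\Delta,A}(p,q)=\delta^0_{D(\Delta(A),\bar c)}(p(\bar c,A),q(\bar c,A))$, extracting $\bar b$ from totality, reading the metric bound off the formulas $d(x,c_i)\in D_0(\Delta(A),\bar c)$, and passing $\dis_{D(\Delta(A),\bar c)}(R)\ge\dis_\Delta(R)$ are all correct, and the ultraproduct/compactness argument to close the gap between $\varepsilon+\eta$ and $\varepsilon$ is the standard move.

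Part~1 has a genuine gap in how you handle the $\Delta(A)$-piece of $D_0(\Delta(A),\bar c)$. Your plan is to augment $R$ with the diagonal pairs $(a,a)$ for $a\in A$ and then ``use Remark~\ref{simplify}.'' But Remark~\ref{simplify} gives the chain $\dis_\Delta(R)\le\dis_{\Delta(A)}(R)\le\dis_{D_0(\Delta(A),C)}(R)=\dis_{D(\Delta(A),C)}(R)$, which is exactly the wrong direction: it bounds the small quantity by the large one and says nothing about whether $\dis_{\Delta(A)}$ is controlled when $\dis_\Delta$ is. Moreover, augmenting $R$ with the diagonal on $A$ can genuinely increase $\dis_\Delta$, since the new pairs introduce comparisons $|\phi^M(x^M,\bar a)-\phi^N(x^N,\bar a)|$ for $\phi\in\Delta$, $(x^M,x^N)\in R$, which are not among the quantities bounded by $\dis_\Delta(R)\le\delta$ (the diagonal pairs were not in $R$). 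So one still needs an independent argument that these mixed comparisons are small --- that is, that the $\Delta(A)$-distortion of $R$ itself is controlled --- and neither the hypothesis $\dis_\Delta(R)\le\delta$ nor Fact~\ref{easyClosure} delivers this. Either the fact's hypothesis should really be a bound on $\dis_{\Delta(A)}(R)$ rather than $\dis_\Delta(R)$ (in which case your argument goes through after you drop the appeal to Remark~\ref{simplify}), or you need a separate uniform-continuity argument for $\Delta(A)$-formulas relative to $\rho_\Delta$, analogous to but not subsumed by the Lemma~1.12 you cite for predicate symbols. As written, that step is unjustified.
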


We can now state the Ryll-Nardzewski-like result obtained by Hanson:

\begin{defn}[{\cite[Definition 2.1]{hansonCat}}]
Let $X$ be a topological space and $d:X^2\to\mathbb{R}$ be a metric on $X$ not necessarily related to the topology. A point $x\in X$ is weakly $d$-atomic if the closed ball 
$$B_{\leq \varepsilon}(x)=\{y\in X\mid d(x,y)\leq \varepsilon\}$$
has nonempty interior for every $\varepsilon>0$. If $Y$ is a subspace of $X$ and $x\in Y$, we say that $x$ is weakly $d$-atomic-in-$Y$ if $x$ is weakly $d$-atomic in $Y$ using the subspace topology and computing the interiors in $Y$. 
\end{defn}

\begin{fact}[{\cite[Theorem 3.13]{hansonCat}}]\label{fact:approx_cat}
For any countable complete theory $T$ and distortion system $\Delta$ of T: 
\begin{enumerate}
\item $T$ is $\Delta$-$\omega$-categorical if and only if every type $p\in S_n(\bar a)$ is weakly $d_\Delta$-atomic-in-$S_n(\bar a)$, seen as a subspace of $S_{n+|\bar a|}(T)$, for every tuple of parameters $\bar a$ and every $n\in\N$. 
\item Same as the previous statement but only considering types in $S_1(\bar a)$.
\item If every space $S_n(T)$ is metrically compact relative to $d_\Delta$, then $T$ is $\Delta$-$\omega$-categorical.
\end{enumerate}
\end{fact}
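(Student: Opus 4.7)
The plan is to adapt the proof of the continuous-logic Ryll--Nardzewski theorem to the distortion-system setting, where partial isomorphisms are replaced by finite partial correlations whose distortion we control via $d_\Delta$. The key bridge is the cited proposition (immediately preceding the theorem) that converts a bound on $d_\Delta$ between types into an almost correlation of small distortion, and conversely, so that categoricity on the model side and atomicity on the type-space side become equivalent.

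For the backward direction of (1), I would fix two separable $M, N \models T$ and build an almost correlation of arbitrarily small distortion by back-and-forth. Starting from the correlated tuples of parameters, at each step I enumerate countable dense subsets of $M$ and $N$ and extend the current finite partial correlation $(\bar m, \bar n)$ by one new element. Given $m' \in M$, the $1$-type $\mathrm{tp}(m'/\bar m)$ lies in $S_1(\bar m)$, and its weak $d_\Delta$-atomicity provides a logic-open neighborhood contained in a $d_\Delta$-ball of any prescribed radius around it. Since $N$ is separable, the set of $1$-types over $\bar n$ realized by elements of $N$ is logic-dense in $S_1(\bar n)$, so we can find $n' \in N$ realizing a type that is $d_\Delta$-close to $\mathrm{tp}(m'/\bar m)$. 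The cited proposition then produces the distortion bound on the extended correlation, and taking the limit yields an almost correlation of distortion $0$.

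For the forward direction of (1), assume $T$ is $\Delta$-$\omega$-categorical and suppose some $p \in S_n(\bar a)$ fails weak $d_\Delta$-atomicity, witnessed by $\varepsilon > 0$; then the complement of the closed $d_\Delta$-ball of radius $\varepsilon$ around $p$ is logic-dense in $S_n(\bar a)$. The aim is to construct two separable $M, N \ni \bar a$ witnessing $a_\Delta(M, N) > 0$: take $M$ separable realizing $p$, and take $N$ separable which omits every type $q$ with $d_\Delta(p, q) \leq \varepsilon/3$ while still realizing a logic-dense set of $d_\Delta$-far types. Such an $N$ exists by the continuous-logic omitting types theorem applied to the family of types within $d_\Delta$-distance $\leq \varepsilon/3$ of $p$, which is logic-nowhere-dense by the failure of weak atomicity. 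Any almost correlation $R$ matching a realizer $\bar m \models p$ in $M$ to any tuple $\bar n$ in $N$ then forces $\bar n$ to realize a type $d_\Delta$-far from $p$; the converse direction of the cited proposition then gives $\mathrm{dis}_\Delta(R) \geq \varepsilon/6$, contradicting $a_\Delta(M, N) = 0$. Arranging this omitting construction so as to preserve $\bar a$, guarantee $N \models T$, and yet omit a family whose logic-closure can be complicated is the main technical obstacle.

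Part (2) reduces to (1) by induction on $n$: an $(n+1)$-type $p(\bar x, y)$ over $\bar a$ is analyzed via its restriction to $\bar x$, which is an $n$-type over $\bar a$ and weakly $d_\Delta$-atomic by induction, together with the $1$-type of $y$ over $\bar a \bar x$ for any realizer $\bar x$, which is weakly $d_\Delta$-atomic by hypothesis; a product-neighborhood argument combines these into weak $d_\Delta$-atomicity of the full type in $S_{n+1}(\bar a)$. Part (3) follows from the observation that if $S_n(T)$ is $d_\Delta$-compact (hence totally bounded), then the logic topology and the $d_\Delta$-metric topology on $S_n(T)$ coincide, by the standard argument that $d_\Delta$ is lower semicontinuous with respect to the logic topology and that two comparable compact Hausdorff topologies must agree. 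Consequently every $d_\Delta$-ball is logic-open, every type is trivially weakly $d_\Delta$-atomic, and (1) gives $\Delta$-$\omega$-categoricity.
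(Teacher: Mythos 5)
This statement is cited verbatim from \cite[Theorem 3.13]{hansonCat} and appears in the paper as an unproved \emph{fact}, so there is no in-paper argument for you to have matched or diverged from. The paper uses it as a black box; the original work in Section \ref{sec:cat} consists of Lemmas \ref{cor_construction} and \ref{ord_cor_construction}, which construct the required correlations directly rather than re-proving this type-space characterization.

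As a blind reconstruction, your sketch follows the expected Ryll--Nardzewski template, and the backward direction of (1), the reduction of (1) to (2), and (3) are all in the right spirit. In (3), the missing link between $d_\Delta$-compactness and comparability with the logic topology is that lower semicontinuity of $d_\Delta$ together with $d_\Delta$-compactness forces every logic-accumulation point of a $d_\Delta$-convergent sequence to be its $d_\Delta$-limit, which is what makes the identity map from the $d_\Delta$-topology to the logic topology continuous; asserting that ``two comparable compact Hausdorff topologies agree'' without first establishing comparability assumes what needs showing. The genuine gap, which you flag yourself, is the forward implication of (1). Passing from ``the closed $d_\Delta$-ball around $p$ has empty logic interior'' to a separable model of $T(\bar a)$ omitting every type in that ball is not a routine invocation of the continuous omitting types theorem: that theorem handles countably many types each equipped with a quantitative non-principality witness, whereas the ball is an uncountable logic-closed set for which no uniform distance-to-realizing bound has been produced. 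It is also not established that the family remains omittable after relativizing to $T(\bar a)$, nor that the resulting failure of categoricity transfers back through \cite[Proposition 1.10]{hansonCat} with constants surviving the $\varepsilon/3$ and $\varepsilon/6$ bookkeeping. Until those steps are supplied, the necessity direction of (1) is a plan rather than a proof.
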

Note: categoricity does not imply metric compactness.

\subsection{Approximate Categoricity of \DU}

\begin{prop}
Let $GH=\{d(x,y)\}$, then $GH$ is atomically complete for \DU. Therefore, $\overline{GH}$ is a distortion system for \DU.
\end{prop}

\begin{proof}
\DU{} has quantifier elimination, and the only atomic symbol is $d(x,y)$. 
\end{proof}

\begin{lem}\label{cor_construction}
Given separable $M,N\models \DU$, $\varepsilon>0$, $k\in\N$, and finite $k$-tuples $\bar m\in M$ and $\bar n\in N$ that are $\varepsilon$-isometric, there a correlation $R_\varepsilon\in cor(M,\bar m;N,\bar n)$, meaning that $(\bar m,\bar n)\in R$, with $dis_{\overline{GH}}(R_\varepsilon)\leq \varepsilon$.
\end{lem}

\begin{proof}
Fix a real number $\gamma>0$ such that 
$$\displaystyle e^{-\gamma} < \frac{d^{M}(m_i,m_j)}{d^{N}(n_i,n_j)} < e^{\gamma}$$
for $i,j<k$. Using separability of $M$ and $N$, extend the tuples $\bar m$ and $\bar n$ into countable sequences
$\{m_i\}_{i\in\N}$ dense in $M$ and $\{n_i\}_{i\in\N}$ dense in $N$. We assume that the first $k$ elements of each sequence are our original tuples.\par

\textbf{Step 1: Construction.} We construct a pair of sequences $\{\mu_i\}_{i\in\N}$ and $\{\nu_i\}_{i\in\N}$ via back-and-forth with the following properties for all $i\in\N$:

\begin{enumerate}
\item $\mu_i\in M$, and $\nu_i\in N$; 
\item $m_i\in (\mu_0,\dotsc, \mu_{2i})$, and $n_i\in (\nu_0,\dotsc, \nu_{2i+1})$;
\item $(\mu_0,\dotsc, \mu_i)$ and $(\nu_0,\dotsc, \nu_i)$ are $\varepsilon$-isometric.
\item $\displaystyle e^{-\gamma} < \frac{d^{M}(\mu_i,\mu_j)}{d^{N}(\nu_i,\nu_j)} < e^{\gamma}$ (note that this requirement is symmetric). 
\end{enumerate}

For the base case, we let $\bar m = (\mu_0,\dotsc,\mu_{k-1})$ and $\bar n =(\nu_0,\dotsc, \nu_{k-1})$. For the induction step, fix $i\in\N$, $i\geq k$, and assume we have build $(\mu_0,\dotsc, \mu_{\ell})$ and $(\nu_0,\dotsc, \nu_\ell)$ for $\ell\leq 2i-1$ satisfying the properties above. If $m_{i}$ is already in the list, then there is nothing to do. As in the proof of Lemma \ref{lem:delta_isometric_induction}, we fix $j_0\leq \ell$ that minimizes $d^M(m_i,\mu_{j_0})$. 
Consider now the open real interval 

$$D = \left((d^M(m_i,\mu_{j_0})-\varepsilon,d^M(m_i,\mu_{j_0})+\varepsilon)\cap (e^{-\gamma}d^M(m_i,\mu_{j_0}), e^{\gamma}d^M(m_i,\mu_{j_0}))\right)\setminus \{d^N(\nu_j,\nu_{j_0})\mid j\leq \ell\}$$

Note that both intervals are open, nonempty, and contain $d^M(m_i,\mu_{j_0})$, so their intersection also open nonempty. Using the density axiom of \DU, there is some $n\in N\cap D$. Let $t$ be strictly smaller than all the distances mentioned in the construction so far, and consider $B_t(n)\subset N$. By tail-density of $\{n_i\}_{i\in\N}$ in $N$, we can find some $i^*\in\N$ greater than all indices mentioned so far such that $n_{i^*}\in B_t(n)$. By the ultrametric axiom, $d^N(n,\nu_j) = d^N(n_{i^*},\nu_j)$, so we can use them interchangeably. We claim now that letting $\mu_{\ell+1}=m_i$ and $\nu_{\ell+1}= n_{i^*}$ extends the sequence preserving all the properties. The proof of $\varepsilon$-isometry is the same as in Lemma \ref{lem:delta_isometric_induction}, and the proof of Property 4 is a small variation where we have by construction that
$$e^{-\gamma} < \frac{d^{N}(\nu_{\ell+1},\nu_{j_0})}{d^{M}(\mu_{\ell+1},\mu_{j_0})} < e^{\gamma}$$
and by assumption that, for all $j\leq \ell$,
$$e^{-\gamma} < \frac{d^{N}(\nu_{j},\nu_{j_0})}{d^{M}(\mu_{j},\mu_{j_0})} < e^{\gamma}$$
so by the same argument as in Lemma \ref{isometric} we obtain Property 4. The back step is symmetric. This completes the construction.\par

\textbf{Step 2: Extension.} Now, we construct $R_\varepsilon$ as follows: $(a,b)\in R_\varepsilon$ if for some (equivalently every) function $f:\N\to \N$ we have that $a=\lim_{i\to\infty} \mu_{f(i)}$ and  $b=\lim_{i\to\infty} \nu_{f(i)}$. To show that this is a well defined notion, we prove the following claims:

\begin{claim}
For every function $f:\N\to \N$, we have that $\{\mu_{f(i)}\}_{i\in\N}$ is Cauchy in $M$ if, and only if $\{\nu_{f(i)}\}_{i\in\N}$ is Cauchy in $N$.
\end{claim}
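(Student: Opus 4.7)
The plan is to use Property~4 of the back-and-forth construction, which supplies a multiplicative (bi-Lipschitz) comparison between the two metrics rather than the merely additive one furnished by the $\varepsilon$-isometry of Property~3. An additive bound $|d^M(\mu_i,\mu_j)-d^N(\nu_i,\nu_j)|<\varepsilon$ alone cannot transfer Cauchy-ness, since distances tending to $0$ on one side cannot be controlled by a fixed $\varepsilon$ on the other. Property~4, however, rearranges to
\[
e^{-\gamma}\,d^N(\nu_i,\nu_j)\;\leq\;d^M(\mu_i,\mu_j)\;\leq\;e^{\gamma}\,d^N(\nu_i,\nu_j)
\]
whenever both distances are positive, and this symmetric two-sided estimate is precisely what drives the argument.

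For the forward direction, assume $\{\mu_{f(i)}\}_{i\in\N}$ is Cauchy in $M$. Given $\delta>0$, choose $N_0$ so that $d^M(\mu_{f(i)},\mu_{f(j)})<e^{-\gamma}\delta$ for all $i,j\geq N_0$. In the generic case $\mu_{f(i)}\neq\mu_{f(j)}$, Property~4 yields
\[
d^N(\nu_{f(i)},\nu_{f(j)})\;\leq\;e^{\gamma}\,d^M(\mu_{f(i)},\mu_{f(j)})\;<\;\delta,
\]
so $\{\nu_{f(i)}\}_{i\in\N}$ is Cauchy in $N$. The reverse implication is symmetric, exchanging the roles of the two metrics.

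The one point requiring care is the degenerate case $\mu_{f(i)}=\mu_{f(j)}$, where the ratio in Property~4 is undefined. Here one inspects the construction: each inductive step adds a new element $\mu_{\ell+1}=m_i$ only when $m_i$ is not already present, and the paired $\nu_{\ell+1}$ is chosen inside an open ball $B_t(n)$ with $t$ strictly smaller than every previously realized nonzero distance on the $\nu$-side, so by the ultrametric inequality $\nu_{\ell+1}$ is automatically distinct from every earlier $\nu_j$. Thus, outside of coincidences already present in the base tuples $\bar m,\bar n$ (which may be removed by assuming those tuples to be injective), the sequences $\{\mu_i\}$ and $\{\nu_i\}$ are injective in tandem: $\mu_i=\mu_j$ forces $i=j$, and hence $\nu_i=\nu_j$. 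This is the only real obstacle; once dispatched, the two Cauchy transfers above finish the claim.
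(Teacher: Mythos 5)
Your proof is correct and follows essentially the same route as the paper: both proofs invoke Property~4 of the back-and-forth construction, the two-sided multiplicative estimate, to transfer Cauchyness between the $\mu$- and $\nu$-sequences (the paper's bound $\delta/e^\varepsilon$ appears to be a typo for $\delta/e^\gamma$, which you write correctly). The only difference is that you explicitly flag and dispatch the degenerate case $\mu_{f(i)}=\mu_{f(j)}$, where the ratio in Property~4 is formally undefined; the paper silently passes over this, implicitly reading Property~4 as asserting that the two distances vanish together. Your justification that the construction keeps the sequences injective in tandem past the base tuple is sound, and the caveat about assuming the base tuples injective is reasonable.
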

\begin{proof}
We prove the forward direction, the backward direction being symmetric. Fix $\delta>0$; by assumption there is some $K\in\N$ such that if $K<i,j$, then $d^{M}(\mu_{f(i)},\mu_{f(j)})<\frac{\delta}{e^\varepsilon}$. Therefore, by Property 4, we have that $d^{N}(\nu_{f(i)},\nu_{f(j)})<\delta$. 
\end{proof}

\begin{claim}
Every element of $M$ can be written as the limit of a sequence $\{\mu_{f(i)}\}_{i\in\N}$ for some function $f:\N\to\N$. Similarly, every element of $N$ can be written as the limit of a sequence $\{\nu_{f(i)}\}_{i\in\N}$ for some function $f:\N\to\N$.
\end{claim}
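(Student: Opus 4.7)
The plan is to exploit density of the enumerations $\{m_i\}_{i \in \mathbb{N}}$ and $\{n_i\}_{i \in \mathbb{N}}$, which were chosen at the start of the construction to be dense in $M$ and $N$ respectively, together with Property 2 of the back-and-forth construction, which guarantees that every $m_i$ appears somewhere in the sequence $\{\mu_j\}_{j \in \mathbb{N}}$ (specifically, within the first $2i+1$ terms) and every $n_i$ appears somewhere in the sequence $\{\nu_j\}_{j \in \mathbb{N}}$.

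First, I would fix an arbitrary $a \in M$. By density of $\{m_i\}_{i \in \mathbb{N}}$ in $M$, pick indices $i_0 < i_1 < i_2 < \cdots$ such that $m_{i_k} \to a$ in $M$ as $k \to \infty$ (for instance, choose $i_k$ so that $d^M(m_{i_k}, a) < 2^{-k}$). By Property 2 of the construction, for each $k$ there exists some $j_k \in \mathbb{N}$ with $j_k \leq 2i_k$ such that $\mu_{j_k} = m_{i_k}$. Setting $f(k) = j_k$ gives a function $f : \mathbb{N} \to \mathbb{N}$ with $\mu_{f(k)} = m_{i_k} \to a$, so $a = \lim_{k \to \infty} \mu_{f(k)}$. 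The analogous argument in $N$, using density of $\{n_i\}_{i \in \mathbb{N}}$ and the second half of Property 2, yields the corresponding statement for arbitrary $b \in N$.

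There is essentially no obstacle here: the entire content of the claim is a bookkeeping consequence of Property 2, which was included in the inductive construction precisely to guarantee that every element of the chosen dense subsets appears in the back-and-forth list. The only mild subtlety to mention is that $f$ need not be injective or monotonic, but this does not matter for the stated claim, and I would note this explicitly only if needed to reassure the reader that ``for some function $f : \mathbb{N} \to \mathbb{N}$'' is a genuinely weak requirement. Once this claim is established, combined with the previous claim about Cauchy sequences transferring between $M$ and $N$, the relation $R_\varepsilon$ defined via pairs of simultaneous limits is a well-defined total surjective correlation, completing Step 2 of the construction.
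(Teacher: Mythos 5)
Your proof is correct and takes essentially the same approach as the paper: deduce density of $\{\mu_j\}$ in $M$ (resp.\ $\{\nu_j\}$ in $N$) from Property 2 together with density of the original enumeration, then read off a convergent subsequence indexed by some $f$. The paper states this tersely; you spell out the same argument in more detail.
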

\begin{proof}
This follows from the fact that the sequences $\{\mu_i\}_{i\in\N}$ and $\{\nu_i\}_{i\in\N}$ are dense in their respective structures. Indeed, by Property 2 of the construction, each element of the dense sequence $\{m_i\}_{i\in\N}$ appears in $\{\mu_i\}_{i\in\N}$, and similarly for $\{\nu_i\}_{i\in\N}$.
\end{proof}

\begin{claim}
The definition of $R_\varepsilon$ is well-defined.
\end{claim}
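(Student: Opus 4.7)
The plan is to unpack what ``well-defined'' should mean for $R_\varepsilon$ in this context: if $f,g:\N\to\N$ are two functions with $\lim_{i\to\infty}\mu_{f(i)}=\lim_{i\to\infty}\mu_{g(i)}=a$ in $M$, then the corresponding limits on the $N$-side must satisfy $\lim_{i\to\infty}\nu_{f(i)}=\lim_{i\to\infty}\nu_{g(i)}$ (and symmetrically). Once this is established, the existence of a single witness $f$ for $(a,b)\in R_\varepsilon$ forces every other witness to produce the same $b$, which is exactly the ``some (equivalently every)'' phrasing in the definition.

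The key trick is an interleaving argument. Given $f,g$ with common $M$-limit $a$, define $h:\N\to\N$ by $h(2i)=f(i)$ and $h(2i+1)=g(i)$. The even and odd subsequences of $\{\mu_{h(i)}\}_{i\in\N}$ both converge to $a$, so the whole sequence converges to $a$ and in particular is Cauchy in $M$. Applying the first claim of the proof (the Cauchy-transfer property which follows from Property 4 of the back-and-forth construction), the sequence $\{\nu_{h(i)}\}_{i\in\N}$ is Cauchy in $N$, and since $N$ is complete as a model of \DU{}, it converges to some $b\in N$. The sequences $\{\nu_{f(i)}\}$ and $\{\nu_{g(i)}\}$ are subsequences of $\{\nu_{h(i)}\}$, so they both converge to $b$, as required.

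The reverse direction, that two functions $f,g$ producing a common $\nu$-limit $b$ must also produce a common $\mu$-limit, is handled identically: Property 4 from the construction is stated symmetrically in $M$ and $N$, so the first claim and hence the interleaving argument apply with the roles of $M$ and $N$ swapped. I don't anticipate any real obstacle here beyond noticing the interleaving trick; once one sees that two candidate representations of $a$ can be merged into a single Cauchy sequence, Claim 1 does all the work, and completeness of $M$ and $N$ delivers the matching limits on the other side.
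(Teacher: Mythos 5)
Your proof is correct, but it takes a genuinely different route from the paper's. The paper works directly: it lets $b_f = \lim_i \nu_{f(i)}$ and $b_g = \lim_i \nu_{g(i)}$ (these exist by the first claim), then bounds
$$d^N(b_f,b_g) \leq d^N(b_f,\nu_{f(i)}) + d^N(\nu_{f(i)},\nu_{g(i)}) + d^N(\nu_{g(i)},b_g),$$
where the outer terms vanish by definition of the limits and the middle term vanishes by invoking Property~4 of the back-and-forth construction once more: since $d^M(\mu_{f(i)},\mu_{g(i)}) \to 0$, the multiplicative bound forces $d^N(\nu_{f(i)},\nu_{g(i)}) \to 0$. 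You instead interleave $f$ and $g$ into a single function $h$, observe that $\{\mu_{h(i)}\}$ is Cauchy because its two interleaved subsequences share the limit $a$, and then route entirely through the first claim to conclude $\{\nu_{h(i)}\}$ is Cauchy with a single limit $b$ inherited by both subsequences. Your approach buys you the reuse of the Cauchy-transfer claim as a black box (no need to re-derive a Property-4 estimate) and an argument that is structurally symmetric in $M$ and $N$ from the start; the paper's approach is more explicit and avoids the extra step of checking that an interleaving of two convergent sequences with the same limit converges. Both ultimately rest on the same Property~4, so neither is stronger, but yours is arguably cleaner modular reasoning.
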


\begin{proof}
Suppose we have two functions $f,g:\N\to\N$ with $a=\lim_{i\to\infty} \mu_{f(i)}= \lim_{i\to\infty} \mu_{g(i)}$. By the previous lemma, the corresponding subsequences on of $\{\nu_i\}_{i\in\N}$ are Cauchy, so let $b_f = \lim_{i\to\infty} \nu_{f(i)}$ and $b_g = \lim_{i\to\infty} \nu_{g(i)}$. Now observe that
$$ d^{N}(b_f,b_g)\leq d^{N}(b_f,\nu_{f(i)}) + d^{N}(\nu_{f(i)},\nu_{g(i)}) + d^{N}(b_g,\nu_{g(i)})$$
where the first and last term can clearly be made arbitrarily small by taking $i\to\infty$. To see that the middle term also goes to zero, note that $d^{M}(\mu_{f(i)},\mu_{g(i)})\to 0$ as both sequences limit to $a$, so by Property 4, the same is true for $d^{N}(\nu_{f(i)},\nu_{g(i)})$. Thus, $b_f=b_g$.  
\end{proof}

\textbf{Step 3: Verification.} By the claims above $R_\varepsilon$ is a total surjective relation, so it is a correlation between $M$ and $N$.\par

Next, recall that $\bar m=(m_0,\dotsc, m_{k-1})$, $\mu_\ell=m_\ell$ for $\ell<k$ by construction, and similarly $\bar n = (\nu_0,\dotsc, \nu_{k-1})$. Therefore, the constant function $f_\ell(i)=\ell$ show that $(m_\ell,n_\ell)\in R_\varepsilon$ for $\ell<k$. Thus, $(\bar m,\bar n)\in R_\varepsilon$.\par 

It remains to show that $dis_{\overline{GH}}(R_\varepsilon)\leq \varepsilon$.  By Proposition \ref{easyClosure}, it is enough to show that $dis_{GH}(R_\varepsilon)\leq \varepsilon$. Suppose then that $(a,b),(c,d)\in R_\varepsilon$. We want to show that $|d^M(a,c)-d^N(b,d)|\leq \varepsilon$. By construction there are functions $f,g:\N\to\N$ such that 
$$a=\lim_{i\to\infty} \mu_{f(i)},\ b=\lim_{i\to\infty} \nu_{f(i)},\ c=\lim_{i\to\infty} \mu_{g(i)},\ d=\lim_{i\to\infty} \nu_{g(i)}.$$
Moreover, $|d^M(\mu_{f(i)},\mu_{g(i)})-d^N(\nu_{f(i)},\nu_{g(i)})|< \varepsilon$ by $\varepsilon$-isometry for every $i\in\N$. The result follows by continuity. 

\end{proof}

\begin{thm}\label{thm:DU_cat}
Thus, \DU{} is strongly $\overline{GH}$-$\omega$-categorical.
\end{thm}

\begin{proof}
Given separable $M,N\models \DU$ and $\varepsilon>0$, we can use Lemma \ref{cor_construction} with the empty tuples to find a correlation $R_\varepsilon\in cor(M;N)$ with $dis_{\overline{GH}}(R_\varepsilon)\leq \varepsilon$. Since this can be done for every $\varepsilon>0$, it follows that $\rho_{\overline{GH}}(M,N)=0$.
\end{proof}

Our next goal is to characterize $d_{\overline{GH}}$ in $S_n(\bar a)$ for some finite tuple $\bar a$:

\begin{lem}\label{lem:type_dis}
Let $\bar a$ be a finite set of parameters for $\DU$. Let $p,q\in S_n(\bar a)$, then $d_{\overline{GH}}(p,q)< \varepsilon$ if and only if  any (resp. all) pair(s) of realizations of $p$ and $q$ (in perhaps different models) are $\varepsilon$-isometric for all $\varepsilon>0$. Thus,
$$d_{\overline{GH}}(p,q)=\inf\{\varepsilon: \text{any/all realizations of $p$ and $q$ are $\varepsilon$-isometric}\}$$
\end{lem}

\begin{proof}
The equivalence between any and all is because $\varepsilon$-isometry is implied by a formula in the complete types (we need finitely many variables here).\par

$(\Rightarrow)$ Suppose we have types $p,q\in S_n(\bar a)$ with $d_{\overline{GH}}(p,q)<\varepsilon$. Therefore, there are models $M,N\models DU$ containing $\bar a$ and tuples $\bar m\in M$ and $\bar n\in N$ realizing $p$ and $q$ respectively such that
$$\rho_{D(\overline{GH}(\bar a),\bar c)}(M,\bar m; N, \bar n)< \varepsilon.$$
Thus, there is a correlation $R\in cor(M,\bar m;N,\bar n)$ with $dis_{D(\overline{GH}(\bar a),\bar c)}(R)< \varepsilon$. By Proposition \ref{simplify}, we get that $dis_{\overline{GH}}(R)< \varepsilon$. Thus, $\bar m$ and $\bar n$ have to be $\varepsilon$-isometric.\par

$(\Leftarrow)$ Suppose we have types $p,q\in S_n(\bar a)$ with models $M,N\models DU$ containing $\bar a$ and tuples $\bar m\in M$ and $\bar n\in N$ realizing $p$ and $q$ respectively such that $\bar m$ and $\bar n$ are $\varepsilon$-isometric. As the tuples are finite, there is some $\varepsilon_0<\varepsilon$ such that the tuples are in fact $\varepsilon_0$-isometric. Note that $M$ extends canonically into a model of the $\mathcal{L}(\bar c,\bar a$) theory $p(\bar c,\bar a)$ by interpreting $\bar c$ as $\bar m$. Similarly, $N$ extends into a model of $q(\bar c,\bar a)$. \par

Next, by Lemma \ref{cor_construction}, there is a  correlation $R\in cor(M,\bar a\bar m;N,\bar a\bar n)$ with $dis_{\overline{GH}}(R)\leq \varepsilon_0$. It remains to show that $dis_{D(\overline{GH}(\bar a),\bar c)}(R)\leq \varepsilon_0$, and it is enough to show that $dis_{D_0(\overline{GH}(\bar a),\bar c)}(R)\leq \varepsilon_0$. In moving from $\overline{GH}$ to $D_0(\overline{GH}(\bar a),\bar c)$, we added $\bar a$ as parameters, which doesn't increase the distortion as they are finitely many and already correlated in $R$, and formulas of the form $d(x,c_i)$ for $i<|\bar c|$. Hence, it is enough to show that 
$$\sup_{c\in\bar c,(u,v)\in R} | d^M(u,c_i)-d^N(v,c_i)|\leq \varepsilon_0$$
but this follows because $(\bar mu,\bar nv)\in R$, so the tuples must be $\varepsilon_0$ isometric. Thus, 
$$d_{\overline{GH}}(p,q)\leq \rho_{D(\overline{GH}(\bar a),\bar c)}(M,\bar m; N, \bar n)< \varepsilon.$$
\end{proof}

\begin{cor}
For any finite set of parameter $\bar a$ living in the monster model of $\DU$, every type in $S_n(\bar a)$ is weakly $d_{\overline{GH}}$-atomic-in-$S_n(\bar a)$, where
$$d_{\overline{GH}}(p,q)=\inf\{\varepsilon: \text{any/all realizations of $p$ and $q$ are $\varepsilon$-isometric}\}$$.
\end{cor}

\subsection{Approximate Categoricity of \UDLO}

Our result on \DU{} can be transferred into \UDLO, giving us approximate categoricity of \UDLO:

\begin{prop}
Let $OGH=\{d(x,y), r(x,y)\}$, then $OGH$ is atomically complete for \UDLO. Therefore, $\overline{OGH}$ is a distortion system for \UDLO.
\end{prop}

\begin{proof}
\UDLO\ has quantifier elimination, and the only atomic symbols are $d(x,y),r(x,y)$ and $d_\leq(a,b)$. However, by Lemma \ref{lem:ulo_dist_ray}, $d_\leq=r$. 
\end{proof}

\begin{thm}\label{thm:UDLO_cat}
Fix separable $M,N\models \UDLO$ and $\varepsilon>0$, then there is $R_\varepsilon\in cor(M,N)$ such that $dis_{\overline{OGH}}(R_\varepsilon)\leq \varepsilon$. Thus, \UDLO{} is strongly $\overline{OGH}$-$\omega$-categorical.
\end{thm}

\begin{proof}
The structure of the proof is the same as in Theorem \ref{thm:DU_cat}. The only change is that in the extension we must also preserve the order type, for which we use Lemma \ref{lem:order_isometric_induction}.
\end{proof}

Our next goal is to characterize $d_{\overline{OGH}}$ in $S_n(\bar a)$ for some finite tuple $\bar a$:

\begin{defn}\label{isometric}
    If $M, N\models \UDLO$, $\varepsilon > 0$, and $(m_1,\dots,m_k) \in M^k, (n_1,\dots,n_k) \in N^k$ are tuples of the same length, call these tuples $\varepsilon$-\emph{order isometric} if 
    \begin{enumerate}
    \item $|d^M(m_i,m_j) - d^N(n_i,n_j)|< \varepsilon$
    \item $|r^M(m_i,m_j)-r^N(n_i,n_j)|<\varepsilon$
    \end{enumerate}
    for all $1 \leq i,j \leq k$.
\end{defn}

Note that order isometric tuples might not have the same order type. The next Lemma is a generalization of Lemma \ref{lem:order_isometric_induction} to order isometric tuples:

\begin{lem}\label{lem:diff_ord_tp_induction}
    Suppose $M \vDash \ULO$, $N \vDash \UDLO$, $\varepsilon > 0$, and $(m_1,\dots,m_k), (n_1,\dots,n_k)$ are $\varepsilon$-order isometric. Then, for every $m\in M$ there is an open ball $B\subset N$ such that if $n\in B$, then  $(m_1,\dots,m_k,m)$ and $(n_1,\dots,n_k,n)$ are $\varepsilon$-order isometric.
\end{lem}
\begin{proof}
    Our strategy is similar to Lemma \ref{lem:delta_isometric_induction} and Lemma \ref{lem:order_isometric_induction}. Thus, let
    $1 \leq i_0 \leq n$ be such that $m_{i_0}$ minimizes $d(m_{i_0},m)$. Without loss of generality, we may assume that $ m_{i_0}< m$ and that $m_{i_0}$ and $m$ are consecutive elements in the order among $\{m_1,\dots,m_k\}\cup\{m\}$.\par

    We will construct an open set $U\subset\mathbb{R}$ such that if we let $n\in N$ be such that $r(n,n_{i_0})-r(n_{i_0},n)\in U$ and $t>0$ is a number smaller than all the nonzero distances among elements in this construction, then $B_t(n)$ is the desired open ball by the ultrametric inequality.\par

    First, we will try letting
    $$U = \left(\max_{i : m_i < m_{i_0}} r(n_i,n_{i_0}),\min_{i : m_i > m_{i_0}} r(n_i,n_{i_0})\right) \cap \left(d(m,m_{i_0}) - \varepsilon,d(m,m_{i_0}) + \varepsilon\right) \setminus \{d(n_i,n_{i_0}):1 \leq i \leq k\}.$$
    We claim that if $n \in N$ satisfies $r(n,n_{i_0}) \in U$, then $(m_1,\dots,m_k,m),(n_1,\dots,n_k,n)$ are $\varepsilon$-order isometric.
    By Lemma \ref{lem:delta_isometric_induction}, we see that this is a $\varepsilon$-isometry. Also, we see that if $r(n,n_{i_0}) \in U$, then for all $1 \leq i \leq k$ with $m_i < m$, we have $r(n_i,n_{i_0})<r(n,n_{i_0})$, so $n_i < n$, and similarly, if $m_i > m$, then $r(n_i,n_{i_0})>r(n,n_{i_0})$, so $n_i > n$. Thus for any $1 \leq i \leq k$, either $|r(n_i,n) - r(m_i,m)| = |0 - 0| = 0$, or $|r(n_i,n) - r(m_i,m)| = |d(n_i,n) - d(m_i,m)| < \varepsilon$, and similarly, $|r(n,n_i) - r(m,m_i)| < \varepsilon$.

    If $U$ is nonempty, this set will work, so assume $U$ is empty - this implies that 
    $$\left(\max_{i : m_i < m_{i_0}} r(n_i,n_{i_0}),\min_{i : m_i > m_{i_0}} r(n_i,n_{i_0})\right) \cap \left(d(m,m_{i_0}) - \varepsilon,d(m,m_{i_0}) + \varepsilon\right)$$
    is empty. We will check that if the interval on the left is nonempty, then the intersection is.
    To start, observe that for any $i$ with $m_i < m_{i_0}$,
    either $r(n_i,n_{i_0}) = 0$, or $n_i > n_{i_0}$, in which case by the $\varepsilon$-order isometry assumption, we see that $$r(n_i,n_{i_0}) = d(n_i,n_{i_0}) < \varepsilon < d(m,m_{i_0} + \varepsilon).$$
    Thus $\max_{i : m_i < m_{i_0}} r(n_i,n_{i_0}) < d(m,m_{i_0}) + \varepsilon$.
    Then observe that for any $i$ with $m_i > m_{i_0}$, by the assumption that $m,m_{i_0}$ are consecutive in the order, we have $m_{i_0} < m < m_i$, and thus $r(m_i,m_{i_0}) > r(m,m_{i_0})$. By the $\varepsilon$-order isometry assumption, we have $d(n_i,n_{i_0}) > d(m_i,m_{i_0}) - \varepsilon \geq d(m,m_{i_0}) - \varepsilon$,
    so $\min_{i : m_i > m_{i_0}} r(n_i,n_{i_0}) > d(m,m_{i_0}) - \varepsilon$.

    The remainder of the proof focuses on the case when
    $$\min_{i : m_i > m_{i_0}} r(n_i,n_{i_0})\leq \max_{i : m_i < m_{i_0}} r(n_i,n_{i_0}).$$
   This implies that there are $j$ and $\ell$ such that $m_j<m_{i_0}<m< m_\ell$ and $r(n_\ell, n_{i_0})\leq r(n_j,n_{i_0})$. We may assume that $j$ is such that $n_j$ is the $<$-greatest with $r(n_j,n_{i_0})$ maximal among all $m_i < m_{i_0}$. Similarly, $\ell$ is such that $n_\ell$ is the $<$-smallest element with $r(n_\ell, n_{i_0})$ minimal among all $m_i > m_{i_0}$. Consider now the set
   $$U=  \left(0, \min_{\substack{1\leq i\leq k, \\n_i\neq n_{i_0}}} d(n_i,n_{i_0})\right)\cap \left(d(m,m_{i_0}) - \varepsilon,d(m,m_{i_0}) + \varepsilon\right)\ $$
   and note that if $d(m, m_{i_{0}})<\varepsilon$, then $U$ is nonempty; also, if $n\in N$ satisfies $r(n,n_{i_0}) \in U$, then Lemma \ref{lem:delta_isometric_induction} shows that this choice makes the tuples $(m_0,\dotsc, m_k, m)$ and  $(n_0,\dotsc, n_k, m)$ $\varepsilon$-isometric. In cases (1)-(3) below, as well as in (4a), we will use this remark to find a suitable extension that preserves $\varepsilon$-isometry and then check tha the order is also approximately preserved. Case (4b) requires a different set $U$ to work, which we define there. With this in mind, we split into cases:

    \begin{enumerate}
    \item[(1)] Suppose $0 = r(n_\ell, n_{i_0}) = r(n_j,n_{i_0})$, so $n_j, n_\ell\leq n_{i_0}$: by $\varepsilon$-order isometry, both $d(m_\ell, m_{i_0})$ and $d(n_\ell,n_{i_0})$ are $<\varepsilon$, and thus so is $d(m,m_{i_0})$, so this is an $\varepsilon$-isometry by the remark above. As for the order: note that, as in the previous case, if the order does not change, then the order predicates disagree by at most $\varepsilon$. Thus, we only need to check the cases where the tuples $(m_i,m)$ and $(n_i,n)$ have different order types. By our assumptions on $j$, this can only happen when $m_{i_0}<m< m_i$ and $n_\ell \leq n_i < n_{i_0}<n$. In that case, $\varepsilon$-order isometry of the original tuples implies that both $d(m_{i_0},m_i)$ and $d(n_{i_0},n_i)$ are $<\varepsilon$ so
        $$|r(m_i,m)-r(n_i,n)|=d(m_i,m)< d(m_i,m_{i_0})<\varepsilon$$
        and 
        $$|r(m,m_i)-r(n,n_i)|=d(n,n_i)\leq \max(d(n,n_{i_0}),d(n_{i_0},n_i))<\varepsilon$$
        since $d(n,n_{i_0})\leq d(n_\ell,n_{i_0})<\varepsilon$.

    \item[(2)] Suppose $0 = r(n_\ell, n_{i_0}) < r(n_j,n_{i_0}) = \delta$, so $n_\ell\leq n_{i_0}<n_j$ and $d(n_j,n_{i_0})=\delta$. As $(m_j,m_\ell)$ and $(n_j,n_\ell)$ have different order types, $\varepsilon$-order isometry requires that their distances be $<\varepsilon$, so $\delta<\varepsilon$. By the ultrametric condition and the order, all of $m_j,m_{i_0},m,m_\ell$ are $<\varepsilon$ apart from each other, and the same holds for $n_{i_0},n_\ell,n_j$. In particular, $d(m,m_{i_0})<\varepsilon$, so the set $U$ is nonempty and does not contain any of the distances of the form $d(n_i,n_{i_0})$. Hence, if $n\in N$ satisfies $r(n,n_{i_0}) \in U$, then we are extending an $\varepsilon$-isometry. As before, to check that the order is $\varepsilon$-preserved, we need to consider the cases where $(m_i,m)$ and $(n_i,n)$ have different order types. Note that as $m_{i_0}<m$ and $n_{i_0}<n$, so nothing to check here. Moreover, $m_{i_0}<m$ are consecutive by assumption; as are $n_{i_0}<n$ by construction. Thus, the only case we need to check are:
    \begin{itemize}
    \item $m_i<m_{i_0}<m$ but $n_{i_0}<n<n_i$. As $(m_i,m_{i_0})$ and $(n_i,n_{i_0})$ have different order types, $\varepsilon$-order isometry requires that their distance be $<\varepsilon$. From this we can conclude that
    $$|r(m,m_i)-r(n,n_i)|=d(m_i,m)\leq\max(d(m_i,m_{i_0}), d(m_{i_0},m))<\varepsilon$$
    as $M$ is an ultrametric space and 
    $$|r(m_i,m)-r(n_i,n)|=d(n_i,n)<\max(d(n_i,n_{i_0}), d(n_{i_0},n))<\varepsilon$$
    since $d(n_{i_0},n)<\delta<\varepsilon$.
    \item $m_{i_0}<m< m_i$ and $n_i \leq  n_{i_0}<n$. In that case, $\varepsilon$-order isometry implies that both $d(m_{i_0},m_i)$ and $d(n_{i_0},n_i)$ are $<\varepsilon$ so
        $$|r(m_i,m)-r(n_i,n)|=d(m_i,m)< d(m_i,m_{i_0})<\varepsilon$$
        and 
        $$|r(m,m_i)-r(n,n_i)|=d(n,n_i)\leq \max(d(n,n_{i_0}),d(n_{i_0},n_i))<\varepsilon$$
        since $d(n,n_{i_0})<\delta<\varepsilon$.
    \end{itemize}

    \item[(3)] Suppose $0 < \rho = r(n_\ell, n_{i_0}) < r(n_j,n_{i_0}) = \delta$, so $n_{i_0}<n_\ell<n_j$ and $d(m_\ell,m_j)$ and $d(n_\ell,n_j)=\delta$ are both $<\varepsilon$ by $\varepsilon$ isometry. As in the previous case, this implies that all of $m_j,m_{i_0},m,m_\ell$ are $<\varepsilon$ apart from each other, and the same holds for $n_{i_0},n_\ell,n_j$. In particular, $d(m,m_{i_0})<\varepsilon$, so the set $U$
    is nonempty and does not contain any of the distances of the form $d(n_i,n_{i_0})$. Hence, if $n\in N$ satisfies $r(n,n_{i_0}) \in U$, then this extension is a $\varepsilon$-isometry. As before, to check that the order is $\varepsilon$-preserved, we need to consider the cases where $(m_i,m)$ and $(n_i,n)$ have different order types.  By the order minimality of $n_\ell$, we can rule out the case where $m_i>m> m_{i_0}$ as this implies $n_i\geq n_\ell>n$. Note that as $m_{i_0}<m$ and $n_{i_0}<n$, we only need to consider the case where $m_i<m_{i_0}<m$ but $n_{i_0}<n<n_i$, which we do as in the previous case.
    
    \item[(4)] Suppose $0 <  \rho =r(n_\ell, n_{i_0}) = r(n_j,n_{i_0})$, so $n_{i_0}<n_\ell,n_j$ and $d(n_{i_0},n_\ell)= d(n_{i_0},n_j)=\rho$: by $\varepsilon$-order isometry the distances $d(m_{i_0},m_j)$, $d(n_{i_0},n_j)$ are $<\varepsilon$, and by the ultrametric condition so is $d(n_{i_0},n_\ell)$. We have two subcases:
    \begin{itemize}
    \item[(4a)] If $n_{i_0}<n_\ell\leq n_j$, then all of the pairwise distances between $m_j,m_{i_0},m,m_\ell$ and $n_{i_0},n_\ell,n_j$ are $<\varepsilon$ and we can proceed as in the previous cases. 
    
    \item[(4b)] If $n_{i_0}<n_j<n_\ell$, then $d(m_j,m_{i_0})$ and $d(n_j,n_{i_0})$ are $<\varepsilon$. The easy case is when $d(m_{i_0},m)<\varepsilon$, which we can can handle using the same set $U$ as before.\par
    
    Otherwise, we must handle this case quite differently than all the previous ones. Note that since $d(m_{i_0},m_j)<\varepsilon$ and we have assumed that $d(m_{i_0},m)\geq\varepsilon$, then $d(m_j,m)=d(m_{i_0},m)\geq \varepsilon$. Therefore, $m_j$ is also a closest point to $m$, and we will focus on it going forward. Note that using the isometry condition we have
    $$d(n_j,n_\ell)> d(m_j,m_\ell)-\varepsilon\geq d(m_j,m)-\varepsilon.$$
    Thus, the open set
    $$U=(0,d(n_j,n_\ell))\cap(d(m_j,m)-\varepsilon,d(m_j,m)+\varepsilon)$$
    is nonempty. Using the density axioms, we can find $n\in N$ be such that $r(n,n_j)-r(n_j,n)\in U$. Since $m_j$ was also the closest point to $m$, we can use the same arguments as in Lemma \ref{lem:delta_isometric_induction} to see that this is an $\varepsilon$-isometry. By the maximality and minimality conditions on $n_j$ and $n_\ell$, we can see that no $n_i\in(n_j,n_\ell)$, so there is no $i$ such that the pairs $(m_i,m)$ and $(n_i,n)$ have different order types, so this is also an $\varepsilon$-order isometry.

    \end{itemize}
    \end{enumerate}
This completes the proof.

\end{proof}

With this result, we can now construct the desired correlations:

\begin{lem}\label{ord_cor_construction}
Given separable $M,N\models \UDLO$, $\varepsilon>0$, $k\in\N$, and finite $k$-tuples $\bar m\in M$ and $\bar n\in N$ that are $\varepsilon$-order isometric, there a correlation $R_\varepsilon\in cor(M,\bar m;N,\bar n)$, meaning that $(\bar m,\bar n)\in R$, with $dis_{\overline{GH}}(R_\varepsilon)\leq \varepsilon$.
\end{lem}

\begin{proof}
The proof structure is the same as Lemma \ref{cor_construction}. However, in the construction step, we use Lemma \ref{lem:diff_ord_tp_induction} to extend $\varepsilon$-order isometry. Thus, for an appropriately chosen $\gamma>0$, we get sequences $\{\mu_i\}_{i\in\N}$ dense in $M$ and $\{\nu_i\}_{i\in\N}$ dense in $N$ (and extending $\bar m$ and $\bar n$ respectively) such that for every $i\in N$ we have that $(\mu_0,\dotsc, \mu_i)$ and $(\nu_0,\dotsc, \nu_i)$ are $\varepsilon$-order isometric and  satisfying the same multiplicative condition as in \ref{cor_construction}.\par

The extension step follows them same strategy to get a correlation $R_\varepsilon\in cor(M,\bar m; N,\bar n)$ such that the $(a,b)\in R_\varepsilon$ if for some (equivalently every) function $f:\N\to \N$ we have that $a=\lim_{i\to\infty} \mu_{f(i)}$ and  $b=\lim_{i\to\infty} \nu_{f(i)}$.\par

The only missing part to show that $dis_{\overline{GH}}(R_\varepsilon)\leq \varepsilon$ is to check that the $r$ predicate on related pairs differs by no more than $\varepsilon$, but as in \ref{cor_construction} this follows from continuity and the fact that the sequences are $\varepsilon$-order isometric. 
\end{proof}

Thus, following the same strategy as in Lemma \ref{lem:type_dis} and Theorem \ref{fact:approx_cat} from Hanson (see {\cite[Theorem 3.13]{hansonCat}), we get that:

\begin{cor}
For any finite set of parameter $\bar a$ living in some model of $\UDLO$, every type in $S_n(\bar a)$ is weakly $d_{\overline{OGH}}$-atomic-in-$S_n(\bar a)$, where
$$d_{\overline{OGH}}(p,q)=\inf\{\varepsilon: \text{any/all realizations of $p$ and $q$ are $\varepsilon$-order isometric}\}$$.
\end{cor}
\section{Regulated Functions}\label{sec_reg}
Now that we have shown quantifier elimination in \UDLO{}, we would like to understand what the quantifier-free definable predicates are, at least in one variable. Our characterization is inspired by, and will generalize, the characterization of definable predicates in one variable in (weakly) o-minimal structures in discrete logic.

In discrete logic, a structure expanding a dense linear order is o-minimal when every definable set in dimension 1 is a finite union of intervals, or weakly o-minimal when every definable set in dimension 1 is a finite union of order-convex sets.
To generalize these properties to continuous logic, we will first need to generalize some standard analysis definitions to general linear orders.

\begin{defn}
    Let $X$ be a linear order.
    Let the set of \emph{strong step functions} on $X$ be the $\R$-linear span of the set of characteristic functions of intervals in $X \to \R$,
    and let the set of \emph{weak step functions} be the $\R$-linear span of the set of characteristic functions of order-convex sets.
\end{defn}
It is easy to see a discrete structure expanding a dense linear order is o-minimal if and only if every $\{0,1\} \subseteq \R$-valued formula is a strong step function,
and that a discrete structure expanding a dense linear order is weakly o-minimal if and only if every $\{0,1\} \subseteq \R$-valued formula is a weak step function.
If $X$ is a complete linear order, then intervals and order-convex sets coincide, and so we may speak simply of step functions, without specifying strong or weak.

Regulated functions on intervals of $\R$ were defined in \cite{bourbaki_FVR}. We may take the following fact as a definition:
\begin{fact}[{\cite[Th\'eor\`eme 3, FVR II.5]{bourbaki_FVR}}]
The space of regulated functions on an interval in $\R$ is the closure of the space of step functions under the $\sup$ norm.
\end{fact}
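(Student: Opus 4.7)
The plan is to prove both inclusions separately. Since the paper takes this statement as a definition, I would implicitly invoke the standard Bourbaki definition: $f : [a,b] \to \R$ is regulated if it possesses a left limit $f(x^-)$ at every $x \in (a,b]$ and a right limit $f(x^+)$ at every $x \in [a,b)$.

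For the forward direction, that every uniform limit of step functions is regulated, I would observe that step functions are locally constant off of their finitely many breakpoints and so trivially have one-sided limits at every point. If $f = \lim_n f_n$ uniformly with each $f_n$ a step function, a standard $3\varepsilon$-argument shows that for each $x$ the sequence $(f_n(x^-))_n$ is Cauchy in $\R$ and its limit equals $f(x^-)$; similarly for right limits. Hence $f$ is regulated.

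For the reverse direction, I would fix a regulated $f$ on $[a,b]$ and $\varepsilon > 0$ and construct an $\varepsilon$-close step function by compactness. For each $x \in [a,b]$, the existence of $f(x^-)$ and $f(x^+)$ yields some $\delta_x > 0$ such that $|f(y) - f(x^-)| < \varepsilon$ on $(x - \delta_x, x)$ and $|f(y) - f(x^+)| < \varepsilon$ on $(x, x + \delta_x)$ (with the appropriate one-sided convention at the endpoints of $[a,b]$). By compactness of $[a,b]$, finitely many of the intervals $(x - \delta_x, x + \delta_x)$ cover it; refining by their endpoints together with the finitely many chosen centers $x$ yields a finite partition of $[a,b]$ into singletons and open subintervals, on each of which $f$ is within $\varepsilon$ of a single one-sided limit value. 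Assigning that value on each open piece and $f(x)$ on each singleton produces the desired step function.

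I expect the main obstacle to be the bookkeeping in the compactness extraction: after choosing a finite subcover, one must select the correct one-sided limit on each resulting sub-piece and verify that the global assembly is indeed within $\varepsilon$ of $f$ pointwise. None of this is conceptually deep, but ensuring that no subinterval of the final refinement straddles a point where the two relevant one-sided limits disagree requires a careful ordering of the centers and breakpoints.
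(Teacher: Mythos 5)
The paper offers no proof of this fact---it is cited verbatim from Bourbaki and the surrounding text explicitly treats it as a definition---so there is no in-paper argument to compare against. Your proof is a correct and standard argument via Heine--Borel compactness. The bookkeeping you flag does go through cleanly: after refining $[a,b]$ at all centers $x_i$ and all endpoints $x_i \pm \delta_i$ (intersected with $[a,b]$), each resulting open subinterval $(p_j,p_{j+1})$ contains none of those distinguished points, so by the covering property it lies entirely inside one side, $(x_i-\delta_i,x_i)$ or $(x_i,x_i+\delta_i)$, of a single center, where $f$ is $\varepsilon$-close to a single one-sided limit. Bourbaki's own proof of Th\'eor\`eme 3 proceeds by the other classical route, a least-upper-bound (``creeping along'') argument on the set of $c \in [a,b]$ such that $f$ admits an $\varepsilon$-close step function on $[a,c]$, using the left limit at $\sup$ to show the set is closed and the right limit to show it is open; both methods are standard and neither buys much over the other. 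One small caveat worth flagging: the fact as phrased says ``interval in $\R$,'' but the equivalence with the one-sided-limit characterization holds only for \emph{compact} intervals (on $\R$ itself, $\sin x$ has one-sided limits everywhere yet is not a uniform limit of step functions since such limits are eventually constant), so your implicit restriction to $[a,b]$ is the correct scope for this argument.
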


Over any complete linear order, such as an interval in $\R$, order-convex sets are precisely intervals, so when stating this fact, we need not distinguish between strong and weak step functions.
However, the order of an o-minimal structure need not be complete, so we will generalize regulated functions to two other definitions:

\begin{defn}\label{defn_strong_reg}
Let $X$ be a linear order, and let $f : X \to \R$.
We say that $f$ is \emph{strongly regulated} if it is a uniform limit of strong step functions, and \emph{weakly regulated} if it is a uniform limit of weak step functions.
\end{defn}

\begin{lem}\label{lem:reg_comb}
    The classes of strong/weak step functions and strongly/weakly regulated functions are closed under continuous combinations.
\end{lem}
\begin{proof}
    The classes of strong and weak step functions are closed under continuous combinations, as if $f_1,\dots,f_n$ are strong/weak step functions, then for any function $g : \R^n \to \R$, $g(f_1(t),\dots,f_n(t))$ is one as well.
    Let $g : \R^n \to \R$ be continuous, with $f_1,\dots,f_n$ strongly/weakly regulated.
    To approximate $g(f_1(t),\dots,f_n(t))$ up to $\varepsilon > 0$ with a strong/weak step function, we find $\delta$ such that $|x_i - y_i|\leq \delta$ implies $|g(x_1,\dots,x_n) - g(y_1,\dots,y_n)|\leq \varepsilon$.
    Then there is a strong/weak step function $f_i'$ for each $i$ approximating $f_i$ up to $\delta$, and then $|g(f_1(t),\dots,f_n(t)) - g(f_1'(t),\dots,f_n'(t))|\leq \varepsilon$ for all $t$.
\end{proof}

We can rephrase the regulated properties with the following useful criteria:
\begin{lem}\label{lem:reg_partition}
    Let $X$ be a linear order and $f : X \to [0,1]$.
    The following are equivalent:
    \begin{enumerate}
        \item\label{item:reg} $f$ is strongly regulated
        \item\label{item:part_const} For all $\varepsilon > 0$, there is a partition of $X$ into finitely many intervals $I_1,\dots,I_n$ such that on each $I_i$, $f$ varies by at most $\varepsilon$
        \item\label{item:part_ineq} For all $0 \leq s < t \leq 1$, there is a partition of $X$ into finitely many intervals $I_1,\dots,I_n$ such that on each $I_i$, either $f(x) < t$ or $f(x) > s$.
    \end{enumerate}
    as are the following:
    \begin{enumerate}
        \item $f$ is weakly regulated
        \item For all $\varepsilon > 0$, there is a partition of $X$ into finitely many order-convex sets $I_1,\dots,I_n$ such that on each $I_i$, $f$ varies by at most $\varepsilon$
        \item For all $0 \leq s < t \leq 1$, there is a partition of $X$ into finitely many order-convex sets $I_1,\dots,I_n$ such that on each $I_i$, either $f(x) < t$ or $f(x) > s$.
    \end{enumerate}
\end{lem}
\begin{proof}
    We will prove the equivalences for strongly regulated, as the proof for weakly regulated has the same structure.

    (\ref{item:reg}) $\implies$ (\ref{item:part_const}):
    If $f$ is strongly regulated, then for any $\varepsilon > 0$, there is a strong step function $g$ with $\sup_x|f(x) - g(x)| \leq \frac{\varepsilon}{2}$. By the nature of strong step functions, $X$ can be partitioned into intervals $I_1,\dots, I_n$ on each of which $g$ is constant. Thus for $x,y \in I_i$, 
    $$|f(x) - f(y)| \leq |f(x) - g(x)| + |f(y) - g(y)| \leq \varepsilon,$$ so $f$ varies by at most $\varepsilon$.

    (\ref{item:part_const}) $\implies$ (\ref{item:part_ineq}):
    Partition $X$ into finitely many intervals on $f$ varies by at most $\frac{t - s}{2}$.
    On each of these intervals $I$, if we had $x,y \in I$ with $f(x) \leq s$ and $f(y) \geq t$, we would have $|f(x) - f(y)|\geq t - s > \frac{t - s}{2}$, a contradiction to our assumption that $f$ varies by at most $\frac{t - s}{2}$.

    (\ref{item:part_ineq}) $\implies$ (\ref{item:reg}):
    Suppose this holds for all $s < t$, and fix $\varepsilon > 0$. We will find a strong step function $g : X \to [0,1]$ with $\sup_{x \in X} |f(x) - g(x)| \leq \varepsilon$.
    Let $n > \varepsilon^{-1}$. Then for any $0 \leq i < n$, we see that there is a finite partition of $X$ into intervals such that on each interval, either $f(x) < \frac{i + 1}{n}$ or $f(x) > \frac{i}{n}$.
    Partition $X$ into a finite partition into intervals that refines all $n$ of these partitions. Then for each interval $I$ in this partition, there are no $x_1,x_2 \in I$ and $0 \leq i < n$ with $f(x_1) \leq \frac{i}{n}$ and $f(x_2) \geq \frac{i + 1}{n}$, so in particular, there is some $j$ with $\frac{j - 1}{n} < f(x) < \frac{j + 1}{n}$ for all $x \in I$. We let $g(x) = \frac{j}{n}$ on this interval, and define it similarly for all other intervals.
    We then see that for all $x \in I$, 
    $$|f(x) - g(x)|= \left|f(x) - \frac{j}{n}\right| < \frac{1}{n} < \varepsilon.$$
\end{proof}

We claim that regulated functions are critical to understanding o-minimality in continuous logic. Any structure of discrete logic can also be understood as a metric structure. We claim that if we view an o-minimal structure of discrete logic this way, its definable predicates are precisely the strongly regulated functions.

\begin{thm}\label{thm:discrete_omin_reg}
    Suppose $M$ is a structure of discrete logic expanding a linear order.
    Then if $f : M \to [0,1]$ is strongly regulated, then using continuous logic, there is some definable predicate $\phi(x)$ with $\phi^M = f$.

    Conversely, if $M$ is o-minimal and $\phi(x)$ is a definable predicate, then $\phi^M$ is strongly regulated. If $M$ is weakly o-minimal and $\phi(x)$ is a definable predicate, then $\phi^M$ is weakly regulated.
\end{thm}
\begin{proof}
    As every interval is a definable set in a linear order, the characteristic function of every interval is given by a definable predicate. As definable predicates are also closed under convex combinations and uniform limits, any strongly regulated function is also given by a definable predicate.

    We now show that the interpretation of any definable predicate over a discrete logic $o$-minimal structure is strongly regulated.
    Because strongly regulated functions are closed under uniform limits, it suffices to show this for formulas.
    By \cite[Remark 9.21]{mtfms}, if $\phi(x)$ is an $\mathcal{L}$-formula, then $\phi^M(x)$ takes only finitely many values, and for each $s \in [0,1]$, $(\phi^M)^{-1}(\{s\})$ is a definable set.
    As any definable set is a finite union of intervals, $\phi^M$ must be a strong step function, and thus strongly regulated.

    If instead $M$ was just weakly o-minimal, we would get a weak step function, so definable predicates must be weakly regulated.
\end{proof}

\section{o-Minimal Metric Structures}
In discrete logic, o-minimal structures can be defined as expansions of a linear order, where all definable sets in one variable are already definable, without quantifiers, using only the symbol $\leq$.

To carry this analogy to expansions of metric linear orders, we now characterize the definable predicates in one variable which are quantifier-free definable using only the symbol $r(x,y)$.
These will turn out to be precisely the metric-continuous regulated functions (Theorems \ref{lem:qf_reg}, \ref{thm:reg_qf}), setting up our proposed definitions of o-minimality and weak o-minimality in Definition \ref{defn:omin}.
For structures from discrete logic, these definitions will coincide with the existing definitions by Theorem \ref{thm:discrete_omin_reg}.

\subsection{Quantifier-Free Definable Predicates}
Now that we have introduced regulated functions in the appropriate generality, we will see how they are related to quantifier-free definable functions in metric linear orders. 

\begin{lem}\label{lem:qf_reg}
    Suppose $M$ is a metric linear order, understood as a  $\{d_\leq,r\}$-structure.
    All quantifier-free definable predicates $\phi(x)$ with parameters, for $|x| = 1$, are strongly regulated.
\end{lem}
\begin{proof}
    By Lemma \ref{lem:reg_comb}, it suffices to check that atomic formulas are strongly regulated.
    In one variable, these are only $d(x;a), d_\leq(x;a), d_\leq(a;x), r(x;a), r(a;x)$.
    Each of these can be expressed in a piecewise fashion with at most two pieces, where it is nondecreasing or nonincreasing on each piece. Monotone functions are strongly regulated, and regulated functions are closed under finite piecewise definitions, finishing the proof.
\end{proof}

\begin{lem}\label{lem:piecewise_qf_def}
    Let $M$ be a metric linear order, and let $f:M \to [0,1]$.
    Suppose there are points $a_1,\dots,a_n \in M$ such that, if we abuse notation and let $a_0 = -\infty, a_{n + 1} = +\infty$,
    then on each interval $[a_i,a_{i + 1}]$ for $0 \leq i \leq n$,
    $f$ equals a quantifier-free definable predicate with parameters in the language $\{r\}$.

    Then $f$ itself is given by a quantifier-free definable predicate with parameters in the language $\{r\}$.
\end{lem}
\begin{proof}
    By induction, we can assume $n = 1$.
    We then just need to show that if 
    $\phi, \psi$ are quantifier-free definable predicates with $\phi(a_1) = \psi(a_1)$, then the function given by
    $$f(x) = \begin{cases}
        \phi(x) & x \leq a_1\\
        \psi(x) & x \geq a_1\\
    \end{cases}$$
    is also given by a quantifier-free definable predicate.

    As both $\phi$ and $\psi$ are metric-continuous, for all $\varepsilon > 0$, there exists $\delta > 0$ such that for all $x \in M$, $d(x,a_1) \leq \delta$ implies $|\phi(x) - \phi(a_1)| + |\psi(x) - \psi(a_1)|\leq \varepsilon$.
    Thus by \cite[Proposition 2.10]{mtfms},
    there is some increasing continuous function $\alpha : [0,1] \to [0,1]$ such that $\alpha(0) = 0$ and for all $x \in M$,
    $$|\phi(x) - \phi(a_1)|+ |\psi(x) - \psi(a_1)| \leq \alpha\left(d(x,a_1)\right).$$

    Now we claim that for all $x \in M$,
    $$f(x) = \min\left(\phi(x) + \alpha(r(x,a_1)), \psi(x) + \alpha(r(a_1,x)),1\right),$$
    noting that the right side is a quantifier-free definable predicate.
    Without loss of generality, we evaluate this for $x \leq a_1$, hoping to get $\phi(x)$.
    We see that
    $\phi(x) + \alpha(r(x,a_1)) = \phi(x)$ while $\psi(x) + \alpha(r(a_1,x)) = \psi(x) +\alpha(d(a_1,x))$.
    It now suffices (as $\phi(x) \leq 1$) to show that $\phi(x) \leq \psi(x) +\alpha(d(a_1,x))$.
    This follows from the construction of $\alpha$, as
    $$|\phi(x) - \psi(x)| \leq |\phi(x) - \phi(a_1)| + |\psi(x) - \psi(a_1)| \leq \alpha\left(d(x,a_1)\right).$$
    Thus the minimum is $\phi(x)$ as desired.
\end{proof}

\begin{thm}\label{thm:reg_qf}
    Suppose $M$ is a metric linear order, understood as a $\{r\}$-structure. All strongly regulated functions $M \to M$ that are continuous with respect to the metric are quantifier-free definable predicates.
\end{thm}
\begin{proof}
    Let $f : M \to M$ be strongly regulated and metric-continuous, and fix $\varepsilon > 0$. There is a strong step function $g : M \to M$ with $\sup_x |f(x) - g(x)| \leq \varepsilon$. We may assume that there are finitely many points $a_1 < \dots < a_n$ in $M$ such that $g$ is constant on the intervals $(a_i,a_{i+1})$ for $0 \leq i \leq n$, where by abuse of notation we let $a_0 = -\infty$ and $a_{n + 1} = +\infty$, and we may assume that for $1 \leq i \leq n$, $g(a_i) = f(a_i)$. Let $s_i$ denote the constant value of $g$ on the interval $(a_i,a_{i+1})$.

    We will construct a function $h : M \to M$, given by a quantifier-free definable predicate, such that $\sup_x |g(x) - h(x)|\leq \varepsilon$. This implies that $\sup_x |f(x) - h(x)|\leq 2\varepsilon$, so $f$ is a uniform limit of quantifier-free definable predicates, and is itself quantifier-free definable.
    
    By Lemma \ref{lem:piecewise_qf_def}, to construct $h$ that is quantifier-free definable, it will suffice to define $h$ by a quantifier-free definable predicate $\phi_i$ on $[a_i,a_{i + 1}]$ for each $0 \leq i \leq n$. To ensure that these definitions agree, we will define $\phi_i$ on each of these closed intervals so that $\phi_i(a_i) = g(a_i)$ and $\phi_i(a_{i + 1}) = g(a_{i + 1})$ for $1 \leq i \leq n$.
    
    Now fix $0 \leq i \leq n$. We choose the quantifier-free definable predicate $\phi_i$ in one of several ways, depending on whether the endpoints $a_i, a_{i+1}$ are limit points of the interval $(a_i, a_{i + 1})$ with respect to the metric. In each case, we must check that $\phi_i(a_i) = g(a_i)$ and $\phi_i(a_{i + 1}) = g(a_{i + 1})$, and that $\sup_{a_i < x < a_{i + 1}}|s_i - \phi_i(x)| \leq \varepsilon$.

    First, we note that if $a_i$ is an endpoint of an open interval $I$, with $a_i$ a limit point of $I$ with respect to the metric, then by continuity of $f$, $f(a_i)$ is a limit point of the image $f(I)$. As for each point $x \in I$, $|f(x) - s_i| = |f(x) - g(x)|\leq \varepsilon$, and $g$ is constant on $I$, we see that also $|g(a_i) - s_i| = |f(a_i) - s_i|\leq \varepsilon$ across $I$.

    \begin{itemize}
        \item If both of the endpoints $a_i$ and $a_{i + 1}$ are limit points of the interval, with respect to the metric, then on this interval, we define $$\phi_i(x) = g(a_i) + \left(g(a_{i + 1}) - g(a_i)\right)\cdot \max\left(\frac{d(x,a_i)}{d(a_{i+1},a_i)},1\right).$$
        This takes the right values at both $a_i,a_{i + 1}$, and is monotone (either nondecreasing or nonincreasing) on $[a_i,a_{i + 1}]$,
        so it only takes values on the closed interval bounded by $g(a_i)$ and $g(a_{i+1})$. For any $x \in (a_i,a_{i + 1})$, we know that $|g(a_i) - s_i|,|g(a_{i + 1}) - s_i| \leq \varepsilon$, so any value of $\phi_i$ between these will also satisfy $|\phi_i(x) - s_i| \leq \varepsilon$. 
        
        \item Suppose that exactly one endpoint is a limit point of the interval between them. Without loss of generality, assume it is $a_{i + 1}$. Then if we let $t = \inf_{x \in (a_i,a_{i + 1}]}d(x,a_i)$, we find that $t > 0$.
        We then define
        $$\phi_i(x) = g(a_i) + (g(a_{i + 1}) - g(a_i))\cdot \max\left(\frac{d(x,a_i)}{t},1\right).$$
        Then $\phi_i(a_i) = g(a_i)$, but for any other $x \in (a_i,a_{i + 1}]$, $d(x,a_i) \geq t$, so 
        $$\phi_i(x) = g(a_i) + (g(a_{i + 1}) - g(a_i))\cdot 1 = g(a_{i+1}).$$
        As $a_{i + 1}$ is a limit point of this interval, we see that for $x \in (a_i,a_{i + 1})$,
        $$|\phi_i(x) - s_i| = |g(a_{i + 1}) - s_i|\leq \varepsilon.$$
        
        \item If neither endpoint is a limit point of the interval between them, let $$t = \min\left(\inf_{x \in (a_i,a_{i + 1}]}d(x,a_i),\inf_{x \in [a_i,a_{i + 1})}d(x,a_{i + 1})\right).$$
        As $t > 0$, we can define
        $$\phi_i(x) = s_i + (g(a_i) - s_i)\cdot \max\left(\frac{d(x,a_i)}{t},1\right) + (g(a_{i + 1}) - s_i)\cdot\max\left(\frac{d(x,a_{i + 1})}{t},1\right).$$
        We then find that, on our closed interval,
        $$\phi_i(x) = \begin{cases}
            g(a_i) & x = a_i\\
            s_i & a_i < x < a_{i + 1}\\
            g(a_{i + 1}) & x = a_{i + 1},\\
        \end{cases}$$
        which clearly satisfies the required inequality $\sup_{x \in (a_i,a_{i + 1})}|\phi_i(x) - s_i| \leq \varepsilon$.
    \end{itemize}
\end{proof}

\subsection{Defining o-Minimality}
Now that we have characterized quantifier-free definable predicates in one variable over metric linear orders, we use this characterization to propose definitions of o-minimality and weak o-minimality for metric structures:

\begin{defn}\label{defn:omin}
    Let $\mathcal{L}$ be a language including $r$, and let $M$ be an $\mathcal{L}$-structure expanding a metric linear order. We say that $M$ is \emph{o-minimal} when either of the following two conditions holds for every definable predicate $\phi(x)$ with $|x| = 1$:
    \begin{itemize}
        \item $\phi(x)$ is equivalent to a quantifier-free definable predicate in the reduced language $\{r\}$
        \item the interpretation of $\phi(x)$ is strongly regulated.
    \end{itemize}

    We call $M$ \emph{weakly o-minimal} when the interpretation of every definable predicate $\phi(x)$ with $|x| = 1$ is weakly regulated.
\end{defn}
We see that if $X$ has the discrete metric, these agree with the existing definitions, as strongly/weakly regulated $\{0,1\}$-valued functions are exactly strong/weak step functions.

\subsection{Monotone Decomposition}
Through the definitions in terms of weak/strong step functions, it is easy to see that (weakly) regulated functions are uniform limits of sums of monotone functions. By default, the functions we get are not definable predicates, but we will now construct a decomposition of weakly regulated definable predicates into monotone definable predicates, over the same parameters.

\begin{lem}\label{lem:weak_reg_decomp}
    Let $M$ expand a metric linear order, and let $\phi(x)$ be a definable predicate with $|x| = 1$.
    
    Let $\phi_0 = \phi$,
    and then recursively define definable predicates 
    $$\psi_n(x) = \sup_{y \leq x}\phi_n(y),$$
    and
    $$\phi_{n+1}(x) = \psi_n(x) - \phi_n(x),$$
    so that
    $$M \vDash \phi(x) = \sum_{k < n}(-1)^k\psi_k(x) + (-1)^n\phi_n(x).$$
    Then also
    \begin{itemize}
        \item For all $n$, $\psi_n(x)$ is nondecreasing.
        \item If for some $\varepsilon > 0$, $b \in M^z$, and $m \in \N$, $M$ can be partitioned into $m$ order-convex sets on which $\phi(x)$ varies by at most $\varepsilon$, then $M \vDash \sup_x\phi_m(x) \leq \varepsilon$ and $M \vDash \sup_x\psi_{m + 1}(x) \leq \varepsilon$.
    \end{itemize}
\end{lem}
\begin{proof}
    By construction, each $\phi_n,\psi_n$  is also definable, and each $\psi_n(x)$ will be nondecreasing because of its definition of a supremum over an initial segment.
    It follows by a simple induction that for all $n$,
    $$\phi(x) = \left(\sum_{k = 0}^{n - 1}(-1)^k\psi_k(x)\right) + (-1)^n\phi_n(x).$$

    Now fix $\varepsilon > 0$ and some $b \in M^z$, and assume that $M$ can be partitioned into $m$ order-convex sets $U_1 < \dots < U_m$ on which $\phi(x)$ varies by at most $\varepsilon$. We will show that $M \vDash \sup_x\phi_m(x) \leq \varepsilon$.
    
    We will show by induction on $n$ that $\phi_n$ also varies by at most $\varepsilon$ on each $U_n$, and if $a \in \bigcup_{i \leq n} U_i$, then $\phi_n(a) \leq \varepsilon$. We can deduce from this that $\sup_x \phi_m(x) \leq \varepsilon$.

    If $n = 0$, this statement holds by our assumptions.
    Assume for induction that it holds for $n$.
    Then $\phi_{n + 1} (x)= \sup_{y \leq x}\phi_n(y) - \phi_n(x)$.
    If $a \in \bigcup_{i \leq n + 1}U_i$ has $\phi_{n + 1}(a) > \varepsilon$, then there must be some $b \leq a$ such that $\phi_n(b) > \phi_n(a) + \varepsilon$.
    If $b \in \bigcup_{i \leq n}U_i$, then the induction hypothesis states that $\phi_n(b) \leq \varepsilon$, which contradicts this, so we may assume $b \in U_n$, and thus $a \in U_n$.
    This contradicts the assumption that $\phi_n$ varies by at most $\varepsilon$ on $U_n$.

    Now we also show that $\phi_{n + 1}$ varies by at most $\varepsilon$ on each $U_i$, that is, if $a,b \in U_i$, then $|\phi_{n + 1}(b) - \phi_{n+1}(a)|\leq \varepsilon$. We may assume $a < b$.
    Note that
    $$|\phi_{n + 1}(b) - \phi_{n+1}(a)|
    = \left|\psi_n(b) - \phi_n(b) - \psi_n(a) + \phi_n(a)\right|,$$
    and $\psi_n$ is nondecreasing, $0 \leq \psi_n(b) - \psi_n(a)$,
    and as $|\phi_n(a) - \phi_n(b)|\leq \varepsilon$, the only way this quantity can be greater that $\varepsilon$
    is if both $\psi_n(b) > \psi_n(a)$ and $\phi_n(a) > \phi_n(b)$.
    If this occurs, $\psi_n(b) = \phi_n(c)$ for some $a < c \leq b$, and $c \in U_i$.
    We then have 
    $$\phi_n(b) < \phi_n(a) \leq \psi_n(a) < \phi_n(c),$$
    so
    $$\left|\phi_n(c) - \phi_n(b) - \psi_n(a) + \phi_n(a)\right|
    \leq \phi_n(c) - \phi_n(b) \leq \varepsilon.$$

    By construction, $\sup_x \psi_{m + 1}(x;b) = \sup_x\phi_m(x;b)$.
\end{proof}

\begin{lem}\label{lem:weak_omin_decomp}
    Let $T$ be a theory extending $\MLO$, all of whose models are weakly o-minimal, and let $\phi(x;z)$ be a definable predicate with $|x| = 1$.
    
    Let $\phi_0 = \phi$,
    and then recursively define definable predicates 
    $$\psi_n(x;z) = \sup_{y \leq x}\phi_n(y;z),$$
    and
    $$\phi_{n+1}(x;z) = \psi_n(x;z) - \phi_n(x;z),$$
    so that
    $$M \vDash \phi(x;z) = \sum_{k < n}(-1)^k\psi_k(x;z) + (-1)^n\phi_n(x;z).$$

    Then for every $\varepsilon > 0$, there is some $m$ such that $T \vDash \sup_{x,z}\phi_m(x;z) \leq \varepsilon$, $T \vDash \sup_{x,z}\psi_{m + 1}(x;z) \leq \varepsilon$.
\end{lem}
\begin{proof}
    Suppose that this fails for some $\varepsilon$.
    Then the partial type $p(z)$ consisting of all formulas of the form $\phi_m(x;z) \geq \varepsilon$ is consistent, and thus realized by $b \in M^z$ for some $M \vDash T$.
    
    The predicate $\psi(x;b)$ is weakly regulated, so Lemma \ref{lem:weak_reg_decomp} states that there is some $m$ such that $M \vDash \sup_x \psi_m(x;b) \leq \frac{\varepsilon}{2}$, contradicting the construction of $b$.
\end{proof}

\subsection{Definable Sets}
We start by providing some examples of definable sets in metric dense linear orders:
\begin{lem}\label{lem:mdlo_def_set}
    Let $M \vDash \MDLO$. If $D \subseteq M$ is the complement of a countable union of pairwise disjoint open intervals, of which only finitely many have diameter $\geq \varepsilon$ for any fixed $\varepsilon > 0$, then $D$ is definable.
\end{lem}
\begin{proof}
    First, consider a family $(I_i : i \in S)$ of pairwise disjoint open intervals in $M$, with $I_i = (a_i,b_i)$. We claim that if $D = M \setminus \bigcup_{i \in S} I_i$, then for all $c \in I_i$, 
    $$d(c,D) = d(c,M \setminus I_i) = \min(d(c,a_i),d(c,b_i)).$$
    Let $I_i = (a_i,b_i)$, and assume $a_i < c < b_i$.
    By the density of the linear order, and the fact that the intervals are pairwise disjoint, we may assume that $a_i,b_i \not \in \bigcup_{i \in S} I_i$.
    Thus $d(c,D) \leq \min(d(c,a_i),d(c,b_i))$. Also, for all $e \in D$, as $e \not \in I_i$, we either have $e \leq a_i$, in which case $d(c,e) \geq d(c,a_i)$, or $e \geq b_i$ and $d(c,e) \geq d(c,b_i)$.

    Thus if $(I_i : i \in \N)$ is a \emph{countable} family of pairwise disjoint open intervals and $D = M \setminus \bigcup_{i = 0}^\infty I_i$, we find that $d(x,D) = \sum_{i = 0}^\infty d(x,M \setminus I_i)$, and it suffices to show that this is a definable predicate.

    Given $n \in \N$, let $S_n$ be the set of all $n$ such that $d(a_n,b_n)\geq \frac{1}{n}$. Note that this set must be finite, as $d(a_n,b_n)$ is also the diameter of the interval $I_n$. Now let
    $$\phi_n(x) = \sum_{i \in S_n} d(x, M \setminus I_i),$$
    and observe that this definable predicate is the distance to the set $M \setminus \bigcup_{i \in S_n} I_i$.
    We also find that for any $c \in M$ and any $n$, 
    $$|d(c,D) - \phi_n(c)|
    = \sum_{i \not \in S_n} d(c, M \setminus I_i),$$
    which is only nonzero when $c \in I_i$ for some $i \not \in S_n$.
    We then find that $\phi_n(c) = \min(d(c,a_i),d(c,b_i)) \leq d(a_i,b_i) < \frac{1}{n}$.
    Thus $\phi_n(x)$ converges uniformly to $d(x,D)$.
\end{proof}

In any metric linear order, if $D$ is definable, and $a \not \in D$ lies in a gap in $D$, then inequalities to $a$ split $D$ into two definable sets. This is easy to do by adding a parameter for $a$, but we can avoid this in the weakly o-minimal case.
\begin{lem}\label{lem:split_def_set}
    Let $M$ be an expansion of a model of $\MLO$. If $D \subseteq M$ is an $A$-definable set and $a \in M \setminus D$, then $D \cap (-\infty, a)$ and $D \cap (a,\infty)$ are $Aa$-definable.

    If $M$ is weakly o-minimal, then these sets are $A$-definable.
\end{lem}
\begin{proof}
    We may assume that both $D \cap (-\infty, a)$ and $D \cap (a,\infty)$ are nonempty - otherwise this is trivial.

    First, we claim that there is a nondecreasing $Aa$-definable predicate $\psi(x)$
    such that
    $$\sup_{x \in D \cap (-\infty, a)}\psi(x) < 
    \inf_{x \in D \cap (a,\infty)}\psi(x),$$
    and that if $M$ is weakly o-minimal, then we can choose $\psi(x)$ that is $A$-definable.

    In the general case, we let
    $\psi(x) = r(y,a)$.
    In the weakly o-minimal case, apply Lemma \ref{lem:weak_reg_decomp} to $\phi(x) = d(x,D)$ to see that there are monotone $A$-definable predicates $\sum_{k < n}(-1)^k\psi_k(x)$ converging uniformly to $\phi(x)$.
    We claim that for some $k$, we can let $\psi(x) = \psi_k(x)$.
    To show this, we will show by contradiction that for some $k$, $\sup_{x \in D \cap (-\infty,a)}\psi_k(x) < \psi_k(a)$.
    Otherwise, fix $n$ such that
    $$\sup_x\left|\phi(x) - \sum_{k < n}(-1)^k\psi_k(x)\right|< \frac{\phi(a)}{3}.$$
    Then for every $k < n$, there is some $b_k \in D \cap (-\infty,a)$ such that $\psi_k(b_k) > \psi_k(a) - \frac{\phi(a)}{3n}$.
    Then if $b = \max_{k < n} b_k$, we have 
    $$\left|\sum_{k < n}(-1)^k\psi_k(b) - \sum_{k < n}(-1)^k\psi_k(a)\right| < n\frac{\phi(a)}{3n},$$
    from which we conclude that $\left|\phi(b) - \phi(a)\right| < \phi(a)$,
    contradicting the fact that $\phi(b) = 0$.

    We may assume that for any $b \in D$, if $b < a$, we have $\psi(b) = 0$, and if $a < b$, we have $\psi(b) = 1$.
    To do this, we can let $r = \sup_{y \in D \cap (-\infty, a)}\psi(y)$, $s =\inf_{y \in D \cap (a,\infty)}\psi(y)$, and then replace $\psi(x)$ with
    $$\max\left(\frac{\psi(x)\dot-r}{s},1\right),$$
    where $r$ and $s$ are constants, simply acting as part of a binary continuous connective.

    Now consider the definable predicate
    $$\theta(x) = \inf_{y \in D}\max\left(d(x,y),\psi(y)\right),$$
    which uses the same parameters as $d(x,D)$ and $\psi$.
    
    Given $b \in M$, $c \leq a$, we find that $\max\left(d(b,c),\psi(c)\right) = d(b,c)$,
    and given $c \in D \cap (a,\infty)$, we find that $\psi(c) = 1$, so $\max\left(d(b,c),\psi(c)\right) = 1$.
    Thus
    \begin{align*}
        \inf_{y \in D}\max\left(d(x,y),\psi(y)\right) 
        &= \min\left(\inf_{y \in D \cap (-\infty,a)}\max\left(d(x,y),\psi(y)\right), \inf_{y \in D \cap (a,\infty)}\max\left(d(x,y),\psi(y)\right)\right)\\
        &= \min\left(\inf_{y \in D \cap (-\infty,a)}d(x,y), 1\right)\\
        &= \inf_{y \in D \cap (-\infty,a)}d(x,y),
    \end{align*}
    so $D \cap (-\infty,a)$ is definable by $\theta(x)$.
\end{proof}

\begin{thm}
    If $M$ is a weakly o-minimal metric structure expanding $\UDLO$, $A \subseteq M$ and $D \subseteq M$ is compact and $A$-definable, then for any $b \in D$, $b$ is $A$-definable. Thus $\mathrm{acl}(A) = \mathrm{dcl}(A)$.
\end{thm}
\begin{proof}
    Suppose $b \in D$. Because $M \vDash \UDLO$ and $D$ is compact, $D$ has empty interior (in the order/metric topology).

    For each $n$, as $b$ is not in the interior of $D$, we may find points $a_n^-,a_n^+ \not \in D$ such that $a_n^- < b < a_n^+$ and $d(a_n^-,a_n^+) < \frac{1}{n}$.
    By applying Lemma \ref{lem:split_def_set} to $a_n^-$ and then $a_n^+$, we see that $D \cap (a_n^-, a_n^+)$ is $A$-definable by a predicate $\phi_n(x)$.

    We claim that $\phi_n(x) \to d(x,b)$ uniformly.
    For each $\varepsilon > 0$, if $\frac{1}{n}<\varepsilon$,
    then for any $c \in M$ and $e \in D \cap (a_n^-, a_n^+)$, $d(c,e) \leq d(c,b) + d(b,e) < d(c,b) + \frac{1}{n}$, so $\phi(c) = d(c,D \cap (a_n^-, a_n^+)) < d(c,b) + \varepsilon$.
    Thus $d(x,b)$ is a limit of $A$-definable predicates, so $b$ is $A$-definable.
\end{proof}

We can now describe all definable sets in weakly o-minimal expansions of $\MDLO$ in terms of their complements:
\begin{lem}\label{lem:weak_omin_def_set}
    In a weakly o-minimal expansion $M$ of an $\MDLO{}$, if $D \subseteq M$ is a definable set, then $M \setminus D$ is a countable union of order-open order-convex sets, which are disjoint and separated by elements of $D$, of which only finitely many have diameter $\geq \varepsilon$ for any fixed $\varepsilon > 0$.
\end{lem}
\begin{proof}
    As $D$ is closed, its complement is open, and an open set is a union of open intervals, which are order-convex and open. By taking unions, we may assume each open order-convex subset of $M \setminus D$ in the decomposition is maximal, which makes them disjoint and separated by elements of $D$. 

    Let $U \subseteq M \setminus D$ be an open order-convex set. Its diameter is $\sup_{x,y \in U}d(x,y)$, so if this diameter is $> \varepsilon$, then there are $a < b$ in $U$ with $d(a,b) > \varepsilon$. By the density assumptions of \MDLO{}, there is some $c \in (a,b)$ with $\frac{1}{3}d(a,b) < d(a,c) < \frac{2}{3}d(a,b)$, and the triangle inequality mandates that $\frac{1}{3}d(a,b) < d(b,c) < \frac{2}{3}d(a,b)$ also.
    Then $d(c,D) = \inf_{d_0 \in D}, d(c,d_0)$, but for each $d_0 \in D$, either $d_0 < U$, in which case $d_0 < a < c$ and $d(d_0,c) > d(a,c) > \frac{1}{3}d(a,b)$, or $d_0 > U$, in which case $c < b < d_0$, so $d(d_0,c) > d(b,c) > \frac{1}{3}d(a,b)$.
    In either case, we see that $d(c,D) \geq \frac{1}{3}d(a,b) \geq \frac{\varepsilon}{3}$.
    This tells us that in any set $U$ in the decomposition of $M \setminus D$, the predicate $d(x,D)$ reaches at least $\frac{1}{3}$ the diameter of $U$.
    As this predicate is weakly regulated, there is a maximum finite number of times it can alternate on any increasing sequence between being at least $\frac{1}{3}\varepsilon$ and being 0. Thus this maximum is also a maximum on the number of sets $U$ in the decomposition which have diameter greater than $\varepsilon$.
\end{proof}

If we strengthen our assumptions to o-minimality, we can now prove definable completeness, which will let us fully characterize definable sets.

\begin{lem}\label{lem:def_complete}
Let $M$ be an o-minimal metric structure and $D\subset M$ a definable set. If $D$ is bounded above (resp. below), then $D$ has a least upper bound (resp. greatest lower bound).
\end{lem}

\begin{proof}
As $D$ is definable, we can quantify over $D$. Thus, there is a definable predicate $\psi$ such that  
$$\psi^M(x) = \inf_{y\in D} r^M(x,y)$$
and $\psi^M$ is strongly regulated by o-minimality of $M$. Also, observe that $D$ is bounded above in $M$ exactly when $M\models \sup_x \psi(x) >0$, so we will assume so and break into two cases:\\\par

\textbf{Case 1:} For every $\epsilon>0$, there is some $x\in M$ such that $0<\psi^M(x)<\epsilon$.\par
We begin by constructing a sequence $\{x_n\}\subset M$ such that $0<\psi^M(x_n)<2^{-n}$ and $x_{n+1}<x_n$. We claim that the sequence is Cauchy in the metric: take $m>n$ so that $x_m<x_n$ and fix some $d\in D$ with $d(x_n, y)<2^{-n}$ (we can do this because $\psi^M(x_n)<2^{-n}$). As $d(\cdot, y)$ is nondecreasing on $[y,\infty)$ and $y<x_m<x_n$, we must have that $d(x_m,y)<2^{-n}$. Thus, 
$$d(x_m,x_n)\leq d(x_n,y)+d(x_m,y)< 2^{-n+1}.$$
Next, let $m=\lim x_n$. It follows that $\psi^M(m)=0$, so $m\in D$ as we can find elements of $D$ arbitrarily close to $m$ in the metric and $D$ is metrically closed. We claim that $m$ is the maximum of $D$: otherwise, we could find some $z\in D$ so that $m<z$. By the construction of the Cauchy sequence, we must also have that $m<z<x_n$ for all $n$. But since $d(\cdot, x_n)$ is nonincreasing on $(-\infty, x_n)$, we must have that $x_n\to z$ on the metric, so $z=\lim x_n$ and $z=m$, a contradiction.\\\par

\textbf{Case 2:} There is an $\epsilon>0$ such that $\psi^M(x)<\epsilon$ implies $\psi^M(x)=0$ for every $x\in M$.\par
Since $\psi^M$ is strongly regulated, we can partition $M$ into finitely many intervals $I_0<I_1<\dotsc<I_n$ so that on each interval either $\psi^M<2\epsilon/3$ or $\psi^M>\epsilon/3$. By assumption, the first case implies $\psi^M=0$.\par
Let $I_k$ be the rightmost interval such that $\psi^M=0$. Thus, $I_{k+1}$ is the leftmost interval so that $\psi^M>\epsilon/3$. In particular, $D<I_{k+1}$. There are a few possibilities here:
\begin{itemize}
\item $I_k$ is closed on the right. Then $I_k$ has a maximum $m$, so $m\in D$ and it is the maximum of $D$. 
\item $I_k$ is open on the right, and $I_{k+1}$ is closed on the left with a minimum $m$. Then $D$ has no maximum. However, $m$ is a upper bound of $D$ and it is the least upper bound as $m$ is the leftmost element of the leftmost interval above $D$. 
\item $I_k$ is open on the right and $I_{k+1}$ is open on the left. Then, there is a point $p$ such that $I_k<p<I_{k+1}$, but this contradicts the partition of $M$ into intervals and our choice of intervals. 
\end{itemize}
\end{proof}

\begin{thm}\label{thm:omin_def_set}
    In an o-minimal expansion $M$ of an $\MDLO$, a set $D \subseteq M$ is definable if and only if $M \setminus D$ is a countable union of disjoint open intervals, of which only finitely many have diameter $\geq \varepsilon$ for any fixed $\varepsilon > 0$.
\end{thm}
\begin{proof}
    One direction is Lemma \ref{lem:mdlo_def_set}.

    Now assume that $D \subseteq M$ is definable.
    By Lemma \ref{lem:weak_omin_def_set}, write $D = M \setminus \bigcup_{i = 0}^\infty U_i$, where each $U_i$ is an open order-convex set,
    and for each distinct nonempty $U_i, U_j$, there is some $e \in D$ between them.

    Now fix $i$, and we will show that $U_i$ is an open interval. It suffices to show that if it is bounded above/below, it has a least upper/greatest lower bound. We may also assume $U_i$ is nonempty, so let $a \in U_i$.
    By Lemma \ref{lem:split_def_set}, both $D \cap (-\infty,a)$ and $D \cap (a,\infty)$ are definable. If $U_i$ is bounded above, then $D \cap (a,\infty)$ is nonempty and bounded below by $a$, so by Lemma \ref{lem:def_complete}, it has a greatest lower bound, call this $b$.
    
    We claim that $b$ is the least upper bound for $U_i$.
    If $c \in U_i$, then by order convexity, $(c,a)$ or $(a,c) \subseteq U_i$, so in either case, $c < b$, so $b$ is an upper bound.
    To see it is the least upper bound, note that if $a < c < b$, then the entire interval $(c,b)$ is less than $D \cap (a,\infty)$, so it does not intersect $D$. IT is thus contained in $U_i$, so $c$ cannot be an upper bound for $U_i$. 

    Similarly, we can show that if $U_i$ is bounded below, it has a greatest lower bound, and we conclude that $U_i$ is an interval.
\end{proof}

We have classified definable subsets of an o-minimal expansion of a metric dense linear order, similarly to the classical case, where such sets are exactly finite unions of intervals. However, not everything works as smoothly as in the classical case.

In classical o-minimality, one of the most basic facts is that a definable set in dimension 1 either contains an open interval or is finite.
We could characterize containing an open interval as having nonempty interior in the order topology, and the standard analog of finiteness in continuous logic is compactness with respect to the metric. In this subsection, we demonstrate that in an o-minimal metric structure, it is possible to construct a definable set in dimension 1 with empty interior that is not compact. This suggests that some basic tools of o-minimality will have to be developed differently.

\begin{thm}
    Let $M \vDash \UDLO$ be separable.
There is a definable subset $D \subseteq M$ that has empty interior but is not compact.
\end{thm}
\begin{proof}
Let $(p_i : i \in \N)$ be dense in $M$.
We will construct a sequence $(I_i : i \in \N)$ of (possibly empty) intervals $I_i = (a_i,b_i)$ recursively, such that for all $n$, either $I_n$ is nonempty with $a_n = p_n$, or $p_n \in \bigcup_i I_i$. We will then define $D = M \setminus \bigcup_{i = 0}^\infty I_i$.
To make this definable, it will suffice for the intervals to be pairwise disjoint, with the diameter of $I_n$ bounded by $\frac{1}{n + 2}$.

To start defining our intervals, let $I_0 = (a_0,b_0)$, where $a_0 = p_0$, $b_0 > a_0$ and $d(a_0,b_0)< \frac{1}{2}$.
Given $I_0,\dots,I_{n-1}$, if $p_n \in \bigcup_{i < n} [a_i,b_i]$, then let $I_n = \emptyset$. In this case, we will end up with $p_n \in \bigcup_i I_i$.
Otherwise, let $a_n = p_n$, let $\varepsilon = d\left(p_n, \bigcup_{i < n} [a_i,b_i]\right)$, and let $b_n > p_n$ be such that $d(a_n,p_n) < \min\left(\varepsilon, \frac{1}{n + 2}\right)$.
This ensures that all of the intervals are disjoint, and that the diameter of $I_n$ is at most $\frac{1}{n + 2}$, ensuring definability.

\begin{claim}
    $D$ has empty interior.
\end{claim}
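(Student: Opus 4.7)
The plan is to argue by contradiction: suppose $D$ has nonempty interior, and extract a concrete point of $D$ whose construction immediately excludes it from $D$. Since $M \vDash \UDLO$ extends $\MDLO$, Lemma \ref{lem:top_agree} and Lemma \ref{lem:mdlo_dlo} tell us the metric and order topologies on $M$ agree and that $M$ is a dense linear order without endpoints. Consequently, any nonempty metric-open subset of $D$ contains a nonempty order-open interval $(c,d)$ with $c<d$, and the dense sequence $(p_i)_{i\in \N}$ is also order-dense, so we may fix $n \in \N$ with $p_n \in (c,d)$.

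Now I would just run the dichotomy built into the construction of the $I_i$. Either $p_n \in \bigcup_{i} I_i$, in which case $p_n \notin D$ contradicting $p_n \in (c,d) \subseteq D$; or else $I_n = (a_n,b_n)$ is nonempty with $a_n = p_n$. In the latter case, since $p_n < d$ and $p_n < b_n$, the intersection $(p_n,b_n) \cap (p_n,d) = (p_n,\min(b_n,d))$ is a nonempty open interval, so by density of the order it contains some point $x$. This $x$ lies in $I_n$, hence $x \notin D$, but also $x \in (c,d) \subseteq D$, a contradiction.

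This closes the claim. The only subtle ingredient is the transfer of density from the metric to the order, which is really just the observation that the two topologies coincide on $M \vDash \UDLO$; everything else is a direct reading of the recursive construction, so I do not anticipate any substantive obstacle.
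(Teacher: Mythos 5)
Your argument is correct and is essentially the same as the paper's, just phrased contrapositively: the paper shows directly that $\bigcup_i I_i$ is dense by observing that every $p_n$ is either in $\bigcup_i I_i$ or equals the left endpoint $a_n$ of a nonempty $I_n$ (hence lies in its closure), whereas you suppose an open interval sits inside $D$, locate a $p_n$ inside it using the same topology-agreement facts, and derive a contradiction from the very same dichotomy in the construction. Same idea, equivalent packaging.
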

\begin{proof}
    Equivalently, $\bigcup_{i = 0}^\infty I_i$ is dense.
    We show this by showing that each $p_n$ is contained in the closure of this set. This happens because either $p_n \in \bigcup_{i = 0}^\infty I_i$ directly, or $I_n$ is nonempty and $p_n = a_n$, in which case $p_n \in [a_n,b_n]$, which is contained in the closure of $\bigcup_{i = 0}^\infty I_i$.
\end{proof}

\begin{claim}
    $D$ is not compact.
\end{claim}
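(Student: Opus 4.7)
The plan is to exhibit an infinite $\varepsilon$-separated subset of $D$ for some $\varepsilon>0$. Since $D$ is metrically closed (as a definable set) inside the complete metric space $M$, compactness is equivalent to total boundedness, so producing such a sequence rules compactness out.

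To set up, I would fix $\varepsilon' = 1/5$. Because the diameter of $I_n$ is bounded by $1/(n+2)$, only finitely many intervals $I_{i_1},\dots,I_{i_k}$ have diameter at least $\varepsilon'$. Pick $K\in M$ strictly larger than every right endpoint $b_{i_1},\dots,b_{i_k}$, using that $M$ has no maximum by Lemma \ref{lem:mdlo_dlo}. Then I would build an increasing sequence $m_0<m_1<m_2<\cdots$ with $m_0>K$ and, given $m_{n-1}$, pick $m_n>m_{n-1}$ satisfying $d(m_{n-1},m_n)>1/2$, using the $\MDLO{}$ density axiom that $\{d(m_{n-1},x):x>m_{n-1}\}$ is dense in $[0,1]$. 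Applying Lemma \ref{lem:ultra_ordered_sequence} to this increasing chain yields $d(m_i,m_j)>1/2$ for all $i\ne j$, and since each $m_n>K$ lies above every $b_{i_j}$, none of the $m_n$ can fall into any of the ``large'' intervals $I_{i_j}$.

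Finally, I would perturb each $m_n$ into $D$: if $m_n\in D$ already, set $m_n'=m_n$; otherwise $m_n\in I_j$ for some $j\notin\{i_1,\dots,i_k\}$, where the diameter of $I_j$ is less than $\varepsilon'=1/5$, so set $m_n'=b_j$. This $b_j$ lies in $D$ because the $I_i$ are open and pairwise disjoint, so no $b_i$ sits inside any $I_\ell$, and $d(m_n,m_n')<1/5$. The triangle inequality then gives $d(m_n',m_\ell')>1/2-2/5=1/10$ for $n\ne\ell$, producing an infinite $(1/10)$-separated sequence in $D$ and contradicting total boundedness. The only real subtlety is matching the separation threshold ($1/2$) against twice the perturbation bound ($2\varepsilon'=2/5$), which forces the choice of $\varepsilon'$; with that numerical budget set, the ultrametric identity on increasing chains and the $\MDLO{}$ density axiom supply everything else.
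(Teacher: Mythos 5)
Your proof is correct, and it takes a genuinely different route from the paper's. The paper's argument fixes $\varepsilon = 3/4$ and shows that every ball of radius $3/4$ must meet $D$ (each such ball contains some $p_n$, which is either $a_n \in D$, or lies in some $[a_m,b_m]$ with $d(a_m,b_m) < 1/2$ and then $a_m \in D$ is in the same ball), then invokes the fact that $M$ is partitioned into infinitely many disjoint $3/4$-balls. Your argument instead builds an explicit infinite $(1/10)$-separated subset of $D$: choose a threshold $K$ above the finitely many ``large'' intervals, use the $\MDLO$ density axiom to produce an increasing sequence of spacing $> 1/2$ above $K$, apply Lemma \ref{lem:ultra_ordered_sequence} to get pairwise separation, and then push each $m_n$ onto the right endpoint $b_j$ of the (necessarily small) gap interval containing it. Both proofs exploit the same structural features of the construction (the diameter control $\mathrm{diam}(I_n) < \frac{1}{n+2}$ and the fact that gap endpoints land in $D$), but yours is more constructive and dispenses with the observation that the $3/4$-balls of $M$ form an infinite partition. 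Your claim that no $b_j$ lies inside another $I_\ell$ is correct but deserves a sentence: it follows from disjointness plus density of the order (if $b_j \in I_\ell$ with $I_j, I_\ell$ disjoint nonempty open intervals in a dense linear order, one finds a point in both), or equivalently from the construction's choice $d(a_n,b_n) < d(p_n, \bigcup_{i<n}[a_i,b_i])$. Also note that in the ultrametric, the triangle-inequality bookkeeping is unnecessary: since $d(m_n,m_n'), d(m_\ell,m_\ell') < 1/5 < 1/2 < d(m_n,m_\ell)$, the ultrametric inequality gives $d(m_n',m_\ell') = d(m_n,m_\ell) > 1/2$ on the nose, so the $1/10$ margin is generous.
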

\begin{proof}
    We will show that every ball of radius $\frac{3}{4}$ contains a point of $D$.
    As $M$ can be partitioned into infinitely many disjoint such balls, this gives an open cover of which no finite subset covers $D$.

    Each ball of radius $\frac{3}{4}$ contains a point $p_n$ from the dense sequence. If $I_n \neq \emptyset$, then $a_n = p_n \in E$, and we are done. Otherwise, there is some $m < n$ with $I_m$ nonempty such that $p_n \in [a_m,b_m]$. As $d(a_m,b_m) < \frac{1}{2}$, $d(a_m,a_n) < \frac{1}{2}$, so $a_m = p_m$ is also in this ball, and in $D$.
\end{proof}
\end{proof}

\subsection{Definable Functions}
We now observe that definable functions in o-minimal structures themselves satisfy an analog of Lemma \ref{lem:reg_partition}:

\begin{lem}\label{lem:func_reg}
    Let $f : M \to M$ be definable in an o-minimal metric structure $M$. Then for all $s < t$ in $M$, there is a partition of $X$ into finitely many intervals such that on each interval $I$ in the partition, either $f(x) > s$ or $f(x) < t$.
\end{lem}
\begin{proof}
    Fix $s < t$ in $M$. We wish to show that $M$ can be partitioned into finitely many intervals such that on each interval, either $f(x) > s$ or $f(x) < t$.

    To do this, we consider the predicate $\phi(x) = r(s,f(x))$, which is definable as $f(x)$ is a definable function. As this is a definable predicate, it is a strongly regulated function itself, so we can partition $M$ into finitely many intervals such that on each interval, $\phi(x) > 0$ or $\phi(x) < d(s,t)$.

    Now on any interval in the partition, if $\phi(x) > 0$, then $f(x) > s$. If instead, $\phi(x) < d(s,t)$, then either $f(x) \leq s$, or $\phi(x) = d(r,f(x)) < d(s,t)$, and in either case, we can deduce that $f(x) < t$.
\end{proof}

\subsection{Distality}
As in classical logic, weak o-minimality (on the level of theories) is enough to imply distality.
\begin{thm}\label{thm:distal}
    If $T$ is a theory expanding $\MLO$ such that all models of $T$ are weakly $o$-minimal, then $T$ is distal.
\end{thm}
\begin{proof}
    By \cite[Theorem 5.22]{anderson1}, it suffices to show that if $\mathcal{I} = (a_i : i \in \Q)$ is an indiscernible sequence over $M \vDash T$, and $b \in M$ is a singleton such that $\mathcal{I}$ with $a_0$ removed is indiscernible over $b$, then $\mathcal{I}$ is indiscernible over $b$.
    Fix $\phi(x;y)$, and we wish to show that $i \mapsto \phi(a_i;b)$ is constant.
    We know that $\phi(a_0;y)$, treated as a predicate on $y$, is weakly regulated, so by Lemma \ref{lem:weak_omin_decomp}, we can decompose $\phi(x;y)$ as a continuous combination of nondecreasing predicates.
    Thus we may assume $\phi(x;y)$ itself is nondecreasing in $y$ for every value of $x$.

    For contradiction, without loss of generality, we may assume that $\phi(a_0;b) < \phi(a_1;b)$.
    Fix real numbers $t_0, t_1$ with
    $\phi(a_0;b) < t_0 < t_1 < \phi(a_1;b)$,
    and define a new predicate
    $$\theta(x_0,x_1) = \sup_y\min\left(t_0 \dot- \phi(x_0;y),\phi(x_1;y)\dot-t_1\right).$$
    Note that $\theta(x_0,x_1) > 0$ is equivalent to there existing some value of $y$ such that $$\phi(x_0;y) < t_0< t_1 < \phi(x_1;y).$$
    In particular,
    $\theta(a_0,a_1) > 0$,
    so by indiscernibility, $\theta(a_{-1},a_0) > 0$, so there is some $b'$ such that $$\phi(a_{-1};b') < t_0 < t_1 < \phi(a_0;b').$$
    Because $\phi(a_1;y)$ is monotone and $$\phi(a_{-1};b') < t_1 < \phi(a_1;b) = \phi(a_{-1};b),$$
    we see that $b' < b$, but this means that 
    $t_1 < \phi(a_0;b') \leq \phi(a_0;b) < t_0$,
    a contradiction since we assume $t_0 < t_1$.
\end{proof}

\section{Cyclic Orders}

In order to correctly describe the order on ordered real closed metric valued fields, we will need to describe the ordering on a projective line. The ordering on such a projective line is most naturally described as a cyclic (or circular) ordering, rather than linear. Cyclic orders are usually defined in terms of the strict relation (analogous to $<$) instead of the nonstrict relation (analogous to $\leq$). We recall that definition:

\begin{defn}
    A relation $\cyc(x,y,z)$ on a set $M$ is a \emph{strict cyclic order} relation when it satisfies the following axioms for all $w,x,y,z$:
    \begin{itemize}
        \item if $\cyc(x,y,z)$, then $\cyc(y,z,x)$
        \item if $\cyc(x,y,z)$, then $\neg\cyc(x,z,y)$
        \item if $\cyc(w,x,y)$ and $\cyc(w,y,z)$,
        then $\cyc(w,x,z)$
        \item if $x,y,z$ are distinct, then $\cyc(x,y,z)$ or $\cyc(x,z,y)$.
    \end{itemize}
\end{defn}

One can define the nonstrict version equivalently by $\ceq(x,y,z) \iff \neg\ceq(x,z,y)$ or $$\ceq(x,y,z) \iff \cyc(x,y,z) \textrm{ or }x = y\textrm{ or }x = z\textrm{ or }y = z.$$
It is straightforward to translate the theory of cyclic orders to this language:

\begin{defn}
    A relation $\ceq(x,y,z)$ on a set $M$ is a nonstrict cyclic order when it satisfies the following axioms for all $w,x,y,z$:
    \begin{itemize}
        \item $\ceq(x,x,y)$
        \item if $\ceq(x,y,z)$, then $\ceq(y,z,x)$
        \item if $\ceq(x,y,z)$ and $\ceq(x,z,y)$, then $x = y, x = z,$ or $y = z$
        \item if $\ceq(w,x,z)$, then either $\ceq(w,x,y)$ or $\ceq(w,y,z)$
        \item either $\ceq(x,y,z)$ or $\ceq(x,z,y)$.
    \end{itemize}
\end{defn}

We can consider cyclic orders on complete metric spaces as metric structures in the language $\Lceq$ consisting of just the distance predicate $d_\ceq$ to the set $\{(x,y,z) \in M^3: \ceq(x,y,z)\}$. As in the case of linear orders, we will also ask for a convexity condition, using the definition of order-convex sets from \cite{kulpmac}:

\begin{defn}
    If $M$ is cyclically ordered, a set $S \subseteq M$ is \emph{order-convex} when for every $a, b \in S$, either $\ceq(a,c,b)$ implies $c \in S$, or $\ceq(b,c,a)$ implies $c \in S$.

    A \emph{metric cyclic order} is a complete metric space equipped with a cyclic order with respect to which each open ball is order-convex.
\end{defn}

\begin{lem}
    In a metric cyclic order, the set $\ceq$ of triples is metric-closed.
\end{lem}
\begin{proof}
    This amounts to showing the set $\cyc$ of triples is open. Suppose that $\cyc(a,b,c)$. Then there is some  
\end{proof}

\subsection{Inequalities}
Before attempting to axiomatize metric cyclic orders, we need to establish some basic inequalities that the metric and the function $d_\ceq$ satisfy. To state this inequalities, we need the notion of cyclic order for tuples:
\begin{defn}
    Say a tuple $(a_1,\dots,a_n)$ of elements of a cyclic order is \emph{in cyclic order} when for any $i,j,k \in \{1,\dots,n\}$ that are themselves in cyclic order (giving $\{1,\dots,n\}$ the standard cyclic order), $\ceq(a_i,a_j,a_k)$.
\end{defn}

When dealing with four points $(w,x,y,z)$ in cyclic order, we imagine them as a quadrilateral of points arranged around a circle.
In that visualization, the following lemma states that a diagonal of this quadrilateral incident to vertex $w$ is at least as long as one of the sides incident to $w$.
\begin{lem}\label{lem:cyclic_four_points}
    If $(w,x,y,z)$ in a metric cyclic order $M$ are in cyclic order, then $$d(w,y) \geq \min\left(d(w,x),d(w,z)\right).$$
\end{lem}
\begin{proof}
    If not, then we may choose a real number $t$ with $d(w,y) < t < \min\left(d(w,x),d(w,z)\right)$.
    However, then the ball of radius $t$ around $w$ contains $w,y$ but not $x,z$, and is thus not order-convex.
\end{proof}

In fact, this is enough to guarantee convexity.
\begin{lem}\label{lem:conv_cond}
    Suppose $\ceq$ is a nonstrict cyclic order on $M$, equipped with a metric $d$, such that for all $(w,x,y,z) \in M^4$ in cyclic order, $$d(w,y) \geq \min\left(d(w,x),d(w,z)\right).$$
    Then $M$ with $d$ and $\ceq$ is a metric cyclic order.
\end{lem}
\begin{proof}
    Suppose that the convexity condition fails. Then there is an open ball, say $B_\varepsilon(w)$ for $\varepsilon > 0, w\in M$, that is not order-convex. Thus there are $u,v \in B_\varepsilon(w)$ and $m_1,m_2 \in M$ with $\ceq(u,v,m_1)$ and $\ceq(v,u,e_2)$ such that $d(w,m_1),d(w,m_2) \geq \varepsilon$.
    This means that $(u,e_2,v,e_1)$ is in cyclic order, so depending on whether $\ceq(w,e_1,e_2)$, either $(w,e_2,v,e_1)$ or $(w,e_1,u,e_2)$ is.
    Whichever of these tuples is in cyclic order, we rename $(w,x,y,z)$.
    Then we have $d(w,y) < \varepsilon$, but $d(w,x),d(w,z) \geq \varepsilon$, contradicting our assumption.
\end{proof}

\begin{lem}\label{lem:cyclic_four_points}
    If $(w,x,y,z)$ are in cyclic order, then $$d(w,y) \geq \min\left(d(w,x),d(w,z)\right).$$
\end{lem}
\begin{proof}
    If not, then we may choose a real number $t$ with $d(w,y) < t < \min\left(d(w,x),d(w,z)\right)$.
    However, then the ball of radius $t$ around $w$ contains $w,y$ but not $x,z$, and is thus not order-convex.
\end{proof}

In order to axiomatize metric cyclic orders, we first need to understand what the predicate $d_\ceq(x,y,z)$ will do when $\ceq(x,y,z)$ is false. To do this outside the ultrametric case, we need to build on Definition \ref{defn:d_delta}:

\begin{defn}
    Define the formulas $$d_\Delta(x,y) = \inf_z(\max(d(x,z),d(y,z)))$$
    and 
    $$d_{\Delta_3}(x,y,z) = \min(d_\Delta(x,y),d_\Delta(y,z),d_\Delta(z,x)).$$
\end{defn}
\begin{lem}
    The formula $d_{\Delta_3}$ is the distance predicate to the definable set $\Delta_3$ of triples $(a,b,c) \in M^3$ such that $a,b,c$ are not all distinct.
\end{lem}
\begin{proof}
    The set $\Delta_3$ is the union of three definable sets, each of which is a permutation of the variables of $\Delta \times M$, so it is definable, and its distance predicate is just the minimum of the three distance predicates to the permutations of $\Delta \times M$.
\end{proof}

The following lemma says that for any quadruple in cyclic order, the maximum of the diagonal distances of the quadrilateral is at least the distance from any side to $\Delta$.
\begin{lem}\label{lem:cyclic_four_points_diag}
    If $(w,x,y,z)$ are in cyclic order, then 
    $$\max\left(d(w,y),d(x,z)\right) \geq d_\Delta(y,z).$$
\end{lem}
\begin{proof}
    By Lemma \ref{lem:cyclic_four_points}, we see that either $d(w,y) \geq d(y,z)$ or $d(w,y) \geq d(x,y)$. If the former is true, then
    $$\max\left(d(w,y),d(x,z)\right) \geq d(w,y) \geq d(y,z) \geq d_\Delta(y,z).$$
    In the latter case, where $d(w,y) \geq d(x,y)$,
    we find that
    $$d_\Delta(y,z) \leq \max(d(x,y),d(x,z)) \leq \max\left(d(w,y),d(x,z)\right).$$    
\end{proof}

\begin{lem}\label{lem:cyclic_distance}
    In a metric cyclic order, either $(x,y,z) \in \ceq$ or
    $$d_\ceq(x,y,z) = \begin{cases}
        0 & \textrm{if }\ceq(x,y,z)\\
        d_{\Delta_3}(x,y,z) & \textrm{otherwise}.
    \end{cases}$$
\end{lem}
\begin{proof}
    As $\Delta_3 \subseteq \ceq \subseteq M^3$, we find that $d_\ceq(x,y,z) \leq d_{\Delta_3}(x,y,z)$. Thus it suffices to show that for every $(x,y,z)$, either $\ceq(x,y,z)$ or for every $(u,v,w) \in \ceq$,
    $$d_{\Delta_3}(x,y,z) \leq d((u,v,w),(x,y,z)).$$

    Fix some $(u,v,w) \in \ceq$.
    Either $\ceq(u,v,z)$ or $\cyc(u,z,v)$.
    In the former case, as $d((u,v,z),(x,y,z)) \leq d((u,v,w),(x,y,z))$, we may assume that $w = z$.
    By the same argument, we see that either
    \begin{itemize}
        \item $u = x$ or $\cyc(v,x,w)$
        \item $v = y$ or $\cyc(w,y,u)$
        \item $w = z$ or $\cyc(u,z,v)$.
    \end{itemize}
    We now do casework on the 8 options from these three dichotomies. By symmetry, we only need to consider how many of the equalities hold.
    
    If the equality holds in all cases, we see that $\ceq(x,y,z)$ holds.

    If two of the equalities hold, we assume that $u = x, v = y,$ and $\cyc(u,z,v)$.
    In this case, $d((u,v,w),(x,y,z)) = d(w,z)$. As $(u,z,v,w) = (x,z,y,w)$ is in cyclic order, we see that by Lemma \ref{lem:cyclic_four_points}, 
    $$d(w,z) \geq \min(d(y,z),d(x,z))\geq d_{\Delta_3}(x,y,z),$$
    because $d(x,z)$ is the distance to $(x,y,x) \in \Delta_3$ and $d(y,z)$ is the distance to $(x,y,y) \in \Delta_3$.
    
    If one of the equalities holds, we assume that $u = x, \cyc(w,y,u),$ and $\cyc(u,z,v)$.
    In this case, $d((u,v,w),(x,y,z)) = \max(d(v,y),d(w,z))$. The sequence $(w,y,x,z,v)$, and thus its subsequence $(w,y,z,v)$ is in cyclic order, so by Lemma \ref{lem:cyclic_four_points_diag}, $$\max\left(d(v,y),d(w,z)\right) \geq d_\Delta(y,z) \geq d_{\Delta_3}(x,y,z).$$
    
    If none of the equalities hold, then the $\cyc$ relation holds in all cases, then we see that the sequence $(u,z,v,x,w,y,u)$ is in cyclic order, which also implies that $\ceq(x,y,z)$ holds.
\end{proof}

\begin{lem}\label{lem:cyclic_closed}
    In an ultrametric cyclic order, the set of triples satisfying $\ceq$ is metric-closed.
\end{lem}
\begin{proof}
    In general, the zero-set of $d_\ceq$ is the metric-closure of the set of triples satisfying $\ceq$.
    By Lemma \ref{lem:cyclic_distance}, we see that if $d_\ceq(a,b,c) = 0$, then $(a,b,c) \in \ceq$, because otherwise, we would have $d_\Delta(a,b,c) = 0$, so three elements $a,b,c$ are not distinct, which still implies $(a,b,c) \in \ceq$.
    Thus this set equals its closure, the zero-set of $d_\ceq$.
\end{proof}

\subsection{Axiomatizing Metric Cyclic Orders}
We can now axiomatize metric cyclic orders.
The first two axioms are to ensure that $d_\ceq$ is the distance predicate to its zero set. These are instances of conditions $E_1$ and $E_2$ from \cite[Theorem 9.12]{mtfms}, and in that notation, $\bar x$ and $\bar y$ are triples of variables, while $d(\bar x,\bar y) = \max(d(x_1,y_1), d(x_2,y_2), d(x_3,y_3))$.

Because of these axioms, we will be able to assume that $\ceq$ is definable, and thus by \cite[Theorem 9.17]{mtfms}, we can quantify over $\ceq$ in the subsequent axioms.

\begin{lem}
    A metric structure in the language $\Lceq$ is the structure given by a metric cyclic order if and only if it satisfies the following theory, \MCO{}:
    \begin{enumerate}[label=(\arabic*)]
        \item\label{item:mco_dist1} $\sup_{\bar x}\inf_{\bar y}\max(d_\ceq(\bar y),|d_\ceq(\bar x) - d(\bar x, \bar y)|) = 0$
        \item\label{item:mco_dist2} $\sup_{\bar x}|d_\ceq(\bar x) - \inf_{\bar y}\min(d_\ceq(\bar y) - d(\bar x, \bar y),1)| = 0$
        \item\label{item:mco_cyc} $\sup_{x,y,z} |d_\ceq(x,y,z) - d_\ceq(y,z,x)| = 0$ (Cyclicity)
        \item\label{item:mco_asym_refl} $\sup_{x,y,z} |(d_\ceq(x,y,z) + d_\ceq(y,x,z)) - d_{\Delta_3}(x,y,z)| = 0$ (Antisymmetry, Reflexivity)
        \item\label{item:mco_tot} $\sup_{x,y,z} \min\{d_\ceq(x,y,z), d_\ceq(y,x,z)\} = 0$ (Totality)
        \item\label{item:mco_trans} $\sup_{w,x,z \in \ceq}\min(d_\ceq(w,x,y),d_\ceq(w,y,z)) = 0$ (Transitivity)
        \item\label{item:mco_conv} $\sup_{w,x,y,z}\min\left(\ceq(w,y,x),\ceq(w,z,y),\left(\min\left(d(w,x),d(w,z)\right)\dot{-}d(w,y)\right)\right) = 0$ (Convexity)
    \end{enumerate}
\end{lem}
\begin{proof}
    First, assume that $M$ is a metric cyclic order with metric $d$ and nonstrict cyclic ordering $\ceq$. If we interpret $d_\ceq$ as the distance predicate to the closed set $\ceq$, it will satisfy Axioms \ref{item:mco_dist1} and \ref{item:mco_dist2}.
    
    Axiom \ref{item:mco_cyc} follows from the cyclic symmetry of $\ceq$.
    By Lemma \ref{lem:cyclic_distance}, $d_\ceq(x,y,z) = 0$ is equivalent to $\ceq(x,y,z)$, so totality of $\ceq$ implies Axiom \ref{item:mco_tot}.
    
    By antisymmetry and reflexivity, either $(x,y,z)$ represents a tuple of nondistinct elements, in which case $d_\ceq(x,y,z) = d_\ceq(y,x,z) = 0$ and  Axiom \ref{item:mco_asym_refl} holds, or it represents distinct elements, in which case precisely one of $\ceq(x,y,z), \ceq(y,x,z)$ holds. By Lemma \ref{lem:cyclic_distance}, for one of these, $d_\ceq$ is 0, and for the other, it is $d_{\Delta_3}(x,y,z)$, satisfying Axiom \ref{item:mco_asym_refl}.

    To check transitivity, we quantify over $\ceq$. Axiom \ref{item:mco_trans} states that for all $w,x,z$, if $\ceq(w,x,z)$, then $\ceq(w,x,y)$ or $\ceq(w,y,z)$, the contrapositive of the transitivity of $\cyc$.

    Given $w,x,y,z$, either $\ceq(w,y,x), \ceq(w,z,y)$, or we have both $\cyc(w,x,y)$ and $\cyc(y,z,w)$, from which we can deduce that $(w,x,y,z)$ is in cyclic order, and then deduce by Lemma \ref{lem:cyclic_four_points} that $d(w,y) \geq \min(d(w,x),d(w,z))$, allowing us to deduce Axiom \ref{item:mco_conv}.

    Now let $(M;d,d_\ceq)$ be a metric structure satisfying these axioms.
    By Axioms \ref{item:mco_dist1} and \ref{item:mco_dist2}, $d_\ceq$ is the distance predicate to its zero set, the definable relation $\ceq$, which we must show is a metric cyclic order with $d$.
    By Axiom \ref{item:mco_cyc} this relation is cyclic, and for $a,b,c \in M$, by Axiom \ref{item:mco_tot}, we see that either $\ceq(a,b,c)$ or $\ceq(b,a,c)$ holds, so $\ceq$ is appropriately total.
    
    By Axiom \ref{item:mco_asym_refl} and totality, for each $a,b,c \in M$, as $\ceq(a,b,c) + \ceq(b,a,c) = d_{\Delta_3}(a,b,c)$ and one of these values is 0, the other must be $d_{\Delta_3}(a,b,c)$. Thus both of these are equal to 0 if and only if $d_{\Delta_3}(a,b,c) = 0$, which happens precisely when $a,b,c$ are not distinct. This makes $\ceq$ reflexive and antisymmetric.

    Axiom \ref{item:mco_trans} shows that for any $a,b,c,d \in M$, if $\ceq(a,b,d)$, then either $\ceq(a,b,c)$ or $\ceq(a,c,d)$, satisfying the transitivity axiom.

    Thus far we have checked that $\ceq$ is a nonstrict cyclic order.
    Axiom \ref{item:mco_conv} implies that for any pairwise distinct $(a,b,c,d) \in M^4$ in cyclic order, the condition of Lemma \ref{lem:conv_cond} holds. It holds trivially if they are not pairwise distinct, so $d,\ceq$ satisfy the convexity condition, and this is a metric cyclic order.
\end{proof}

\subsection{Rolling Up Linear Orders}

Given a linear order $(M,\leq)$, it can be interpreted as a cyclic order by defining
$$\ceq(x,y,z) \iff (x \leq y \leq z) \lor (y \leq z \leq x) \lor (z \leq x \leq y).$$
We call this procedure \emph{rolling up} a linear order.
In the case of metric linear orders, we can again define the roll-up as a metric cyclic order structure:

\begin{lem}
    If $M$ is a metric linear order,
    then the roll-up of $M$ with the same metric is a metric cyclic order, which is a reduct of the metric linear order structure.
\end{lem}
\begin{proof}
    To check that the roll-up is a metric cyclic order, it suffices to check the cyclic convexity property, in the form given in Lemma \ref{lem:conv_cond}.
    Suppose $(w,x,y,z) \in M^4$ are in cyclic order - we wish to check that $d(w,y) \geq \min(d(w,x),d(w,z))$.
    Some cyclic permutation of $(w,x,y,z)$ is in increasing order. In all four of these cases, we see that either $w \leq x \leq y$, so $d(w,x) \leq d(w,y)$ by the metric linear order convexity property, or $y \leq z \leq w$, so $d(w,z) \leq d(w,y)$.

    For the reduct claim, we check that $d_\ceq(x,y,z)$ is definable in $\Lleq$, by proving that it equals
    $$\min\left(d_\leq(x,y),d_\leq(y,z),d_\leq(x,z)\right)
    + \min\left(d_\leq(y,z),d_\leq(z,x),d_\leq(y,x)\right)
    + \min\left(d_\leq(z,x),d_\leq(x,y),d_\leq(z,y)\right).$$
    If any of these three terms is nonzero, it is equal to $d_{\Delta_3}(x,y,z)$. The first term is nonzero when $y < x, z < y,$ and $z < x$ - that is, $z < y < x$.
    By symmetry, the second term is nonzero when $x < z < y$, and the third is nonzero when $y < x < z$.
    These are mutually incompatible conditions, and one of them occuring is equivalent to $\cyc(y,x,z)$.
    Thus the sum is equal to 0 when $\ceq(x,y,z)$, and otherwise, $d_{\Delta_3}(x,y,z)$.
\end{proof}

The converse is not true - not every metric cyclic order is the roll-up of a metric linear order.
\begin{eg}
    The roll-up of the linear order $[0,1)$ is the same cyclic order as the obvious cyclic order on the unit circle.

    This cyclic order is compatible with the metric $d(x,y) = \min(|x - y|,1 - |x - y|)$, which corresponds (up to a constant multiple) to geodesic distance around the unit circle.

    This cyclic order, with this metric, is not the roll-up of any metric linear order.
\end{eg}
\begin{proof}
    By the least upper bound property of the reals, if $M$ is the roll-up of a linear order, that linear order has either a minimum or a maximum element, but not both.
    This linear order is isomorphic, by a shift/rotation, to $[0,1)$ or $(0,1]$, in such a way that the induced metric remains $d(x,y) = \min(|x - y|,1 - |x - y|)$.
    This is not a valid metric linear order, as the open ball $B_{0.2}(0.1)$ is not convex.
\end{proof}

We can recover this converse in the ultrametric case.
\begin{lem}\label{lem:uco_roll-up}
    If $M$ is a ultrametric cyclic order, then $M$ is the roll-up of a metric linear order.
\end{lem}
\begin{proof}
    This is trivial if $|M| < 2$.

    We can first extend $M$ to a larger ultrametric cyclic order - if this larger ultrametric cyclic order has a compatible linear order, then the induced linear order on $M$ will also be compatible with the metric cyclic order on $M$.

    Because of this, if $D = \sup_{x,y \in M} d(x,y)$, then we may assume, by passing to an elementary extension, that $M$ contains elements $a,b \in M$ with $d(a,b) = D$, while still satisfying $\sup_{x,y} d(x,y) \leq D$.

    We then can partition $M$ into the order-convex pieces $B_D(a)$ (the open ball of radius $D$) and $M \setminus B_D(a)$. We give $B_D(a)$ a compatible linear ordering with $x \leq y \iff \ceq(x,y,b)$, and give $M \setminus B_D(a)$ a compatible linear ordering with $x \leq y \iff \ceq(x,y,a)$.
    We can then extend these to a linear order on $M$ by declaring
    $$B_D(a) < (M \setminus B_D(a)).$$
    This linear order is compatible with the cyclic order on $M$. To see that it is also compatible with the metric on $M$, let $B\subseteq M$ be an open ball.
    By the ultrametric property, the ball $B$ is either disjoint from $B_D(a)$ or comparable to it. Thus either $B \subseteq B_D(a), B \subseteq M \setminus B_D(a)$, or $B_D(a) \subsetneq B$, in which case $B$ has radius greater than $D$, so $B = M$.
    In all three cases, $B$ is order-convex.
\end{proof}

\subsection{Ultrametric Dense Cyclic Orders}
We now introduce a completion of the theory of ultrametric cyclic orders, analogous to $\UDLO$.

\begin{defn}
    Let $\UDCO$ be the theory consisting of $\MCO$, $\DU$, and the following axioms, for every $p \in \Q \cap (0,1)$:
    $$\sup_{u,v}\inf_{x,y,z \in \ceq} d(u,x) + d(v,y) + |d(u,z) - pd(u,v)| = 0,$$
    $$\sup_{u,v}\inf_{x,y,z \in \ceq} d(u,x) + d(v,y) + |d(v,z) - pd(u,v)| = 0.$$
\end{defn}

\begin{lem}\label{lem:udco_cond}
    Suppose that $(M,d,\ceq) \vDash \MCO \cup \DU$.
    Then $(M,d,\ceq)\vDash \UDCO$ if and only if for each $a,b \in M$, the sets $\{d(a,c) : \ceq(a,b,c)\} \cap [0,d(a,b)], \{d(b,c) : \ceq(a,b,c)\} \cap [0,d(a,b)]$ are both dense in $[0,d(a,b)]$.
\end{lem}
\begin{proof}
    First, we will check that this condition is sufficient for $(M,d,\ceq)\vDash \UDCO$.
    Without loss of generality, we show that for $p \in \Q \cap (0,1)$, $a,b \in M$ and $\varepsilon > 0$,
    $$\inf_{x,y,z \in \ceq} d(a,x) + d(b,y) + |d(a,z) - pd(a,b)| < \varepsilon.$$
    By our density condition, there is some $c \in M$ with $\ceq(a,b,c)$ such that $|d(a,c) - pd(a,b)| < \varepsilon$.
    Thus we see that 
    $$\inf_{x,y,z \in \ceq} d(a,x) + d(b,y) + |d(a,z) - pd(a,b)| \leq d(a,a) + d(b,b) + |d(a,c) - pd(a,b)| <\varepsilon.$$

    Now assume $(M,d,\ceq)\vDash \UDCO$, and without loss of generality, we will show that for all $a,b \in M$, the set $\{d(a,c) : \ceq(a,b,c)\} \cap [0,d(a,b)]$ is dense in $[0,d(a,b)]$.
    This is trivial if $a = b$, so assume $d(a,b) > 0$.
    Fix $p \in \Q \cap (0,1)$, and $0 < \varepsilon < \min\left(\frac{p}{2}d(a,b),(1-p)d(a,b)\right).$
    The axioms of $\UDCO$ tell us that
    there exist $a',b',c \in M$ with $\ceq(a',b',c)$ and
    $$d(a,a') + d(b,b') + |d(a,c) - pd(a,b)| < \varepsilon.$$
    
    It suffices to show that $\ceq(a,b,c)$, as $|d(a,c) - pd(a,b)| < \varepsilon$, which would assure density.
    By the ultrametric inequality, as $d(a,a') < d(a,b)$, we see that $d(a,b') = d(a,b)$.
    We see that $$d(a,c) > pd(a,b) - \varepsilon > \frac{p}{2}d(a,b) > \varepsilon > d(a,a').$$

    We claim that $\ceq(a,b',c)$. If this were not the case, then either $(a,b',a',c)$ or $(a',b',a,c)$ would be in cyclic order, but $d(a,a') < d(a,c),d(a,b').$

    Also, $d(a,c) < (1-p)d(a,b) + \varepsilon < d(a,b)$,
    so $d(b,c) = d(a,b) > d(b,b')$, so by a similar convexity argument, as $\ceq(a,b',c)$, we also have $\ceq(a,b,c).$
\end{proof}

\begin{lem}\label{lem:udlo_iff_udco}
    If $(M,d,d_\ceq)$ is the roll-up of a metric linear order $(M,d,d_\leq)$, then $(M,d,d_\leq) \vDash \UDLO$ if and only if $(M,d,d_\ceq) \vDash \UDCO$.
\end{lem}
\begin{proof}
    First, fix an ultrametric dense linear order $(M,d,d_\leq)$. We know that $(M,d,d_\ceq) \vDash \MCO \cup \DU$. Without loss of generality, fix $a,b \in M$, and we will show that $\{d(a,c) : \ceq(a,b,c)\} \cap [0,d(a,b)]$ is dense in $[0,d(a,b)]$
    By $\UDLO$'s density axioms, for any $p \in [0,d(a,b)]$ and $\varepsilon > 0$, there is some $c \in M$ with $c < a$ and $|d(a,c) - p| < \varepsilon$, and it suffices to show that $\ceq(a,b,c)$.
    If $a \leq b$, then we have $c < a \leq b$ so $\ceq(a,b,c)$.
    If $b < a$, then choose $0 < \varepsilon < d(a,b) - p$,
    so that $d(a,c) < d(a,b)$. Then convexity ensures that $b < c < a$, so $\ceq(a,b,c)$.

    Now fix $(M,d,d_\leq)$ and assume that $(M,d,d_\ceq) \vDash \UDCO$. It is easy to see that $(M,d,d_\leq) \vDash \DU \cup \ULO$, so without loss of generality, it suffices to show that for every $a \in M$ and $0 < s < t < 1$, there is some $c \in M$ with $a < c$ and $s < d(a,c) < t$.
    As $(M,d,d_\leq) \vDash \DU$, fix $b \in M$ with $d(a,b) > t$.
    Then by Lemma \ref{lem:udco_cond}, there is $c$ with $\ceq(a,c,b)$ and $s < d(a,c) < t$, and we can show that $a < c$. If $a < b$, we see that $a < c < b$. If $b < a$, then $c \leq a$ would imply $d(a,b) \leq d(c,a)$, a contradiction to $d(a,c) < t < d(a,b)$, so $a < c$.
\end{proof}

\begin{thm}\label{thm:UDCO_complete}
    The theory $\UDCO$  is complete.
\end{thm}
\begin{proof}
    Every model $(M,d,d_\ceq) \vDash \UDCO$ is a reduct of a model of the complete theory $\UDLO$, and thus satisfies the complete theory of $d_\ceq$-reducts of $\UDLO$.
\end{proof}

\subsection{Cyclic o-Minimality}
We now define regulated functions for the cyclic order context, and characterize definable predicates in dimension 1 over $\UDCO$.

\begin{defn}
    Let $X$ be a cyclic order.
    
    A \emph{cyclic interval} is a set of the form $\{c : \ceq(a,c,b)\}$ or $\{c : \cyc(a,c,b)\}$.

    Let the set of \emph{cyclic strong step functions} on $X$ be the $\R$-linear span of the set of characteristic functions of intervals in $X \to \R$,
    and let the set of \emph{cyclic weak step functions} be the $\R$-linear span of the set of characteristic functions of cyclically order-convex sets.

    Then let the set of \emph{cyclically strongly regulated} functions be the closure of the set of cyclic strong step functions under uniform limits, and let the set of \emph{cyclically weakly regulated} functions be the closure of the set of cyclic strong step functions under uniform limits.

    If $X$ is a linear order, say that $f : X \to \R$ is cyclically strongly/weakly regulated on $X$ when $f$ is cyclically strongly/weakly regulated on the roll-up of $X$.

    If $M$ is a structure expanding a model of $\MCO$, then we call $M$ \emph{cyclically weakly o-minimal} when the interpretation of every definable predicate in one variable with parameters is cyclically weakly regulated, and \emph{cyclically o-minimal} when the interpretation of every definable predicate in one variable with parameters is cyclically strongly regulated.
\end{defn}

\begin{lem}
    If $X$ is a dense linear order without endpoints, then $f: X \to \R$ is cyclically strongly regulated if and only if it is strongly regulated and $\lim_{x \to -\infty} f(x) = \lim_{x \to +\infty} f(x)$.
\end{lem}
\begin{proof}
    Cyclic intervals in the roll-up of $X$ are all unions of at most two intervals, so any cyclic strong step function is also a strong step function, and cyclically strongly regulated functions are strongly regulated.

    Any interval of $X$ \emph{not} containing an initial or final segment of $X$ is of the form $(a,b)$ or $[a,b]$ and thus is a cyclic interval of the roll-up.
    Any cyclic interval of the roll-up which contains an initial segment of $X$ also contains a final segment of $X$, and vice versa. (This relies on the fact that $X$ lacks endpoints.) Thus the cyclic intervals of the roll-up are precisely $X$, $\emptyset$, intervals of $X$ that are neither initial nor final segments, and unions of a nonempty initial ray and a nonempty final ray of $X$.
    
    From this, we see that a cyclic strong step function on the roll-up is precisely a strong step function $f$ where $\lim_{x \to -\infty} f(x) = \lim_{x \to +\infty} f(x)$,
    so any cyclically strongly regulated function $f$ satisfies this also.

    Let $f: X \to \R$ be a strongly regulated function satisfying $\lim_{x \to -\infty} f(x) = \lim_{x \to +\infty} f(x)$.
    For every $\varepsilon > 0$, there is a strong step function $g : X \to \R$ with $\sup_x|f(x) - g(x)|< \varepsilon$, and thus $\left|\lim_{x \to -\infty} g(x) - \lim_{x \to +\infty} g(x)\right| < 2\varepsilon$.
    Thus we could replace the value of $g$ on one of these rays with the value on the other, and get a cyclic strong step function $h$, without ever changing the value by more than $2\varepsilon$.
    Thus $\sup_x|f(x) - h(x)|< \varepsilon$, from which we conclude that $f$ is cyclically strongly regulated.
\end{proof}

\begin{thm}
    If $M \vDash \UDCO$, then the interpretations of definable predicates with parameters in one variable are precisely the cyclically strongly regulated functions which are continuous with respect to the metric, which are all given by quantifier-free definable predicates with parameters.

    In particular, $M$ is cyclically o-minimal.
\end{thm}
\begin{proof}
    As $M$ is a reduct of some model of $\UDLO$, all interpretations of definable predicates with parameters in one variable are strongly regulated with respect to that linear order, and all are continuous with respect to the metric.
    
    Let $\phi(x)$ be an $\Lceq$-predicate with parameters in $M$. It remains to show that if $\psi(x)$ is the $\Lleq$-predicate with the same parameters with the same interpretation as $\phi(x)$ with respect to this linear order, then $\lim_{+\infty}\psi(x) = \lim_{-\infty}\psi(x)$.
    This is equivalent to showing that
    $$(M,d,d_\leq) \vDash \inf_x\inf_y\sup_{w : w \leq x} \sup_{z : y \leq z}|\psi(w) - \psi(z)| = 0.$$

    Fix an elementary extension $(N,d,d_\leq) \succeq (M,d,d_\leq)$ such that there are $a,b \in N$ with $d(a,b) = 1$. It suffices to show that $$(N,d,d_\leq) \vDash \inf_x\inf_y\sup_{w : w \leq x} \sup_{z : y \leq z}|\psi(w) - \psi(z)| = 0.$$
    
    Examining the proof of Lemma \ref{lem:uco_roll-up}, we see that $(N,\ceq)$ can be partitioned into two disjoint cyclically order-convex sets $A, B$ such that either defining $\forall a \in A, \forall b \in B, a \leq b$ or defining $\forall a \in A, \forall b \in B, b \leq a$ induces a distinct valid $\UDLO$ structure on $N$. Thus in at least one of these linear orders, there is a cut placing a final segment of our initial linear order next to an initial segment of the initial linear order. As the limits of $\psi(x)$ on either side of the cut are equal, we find that in the original linear order,
    $$(N,d,d_\leq) \vDash \inf_x\inf_y\sup_{w : w \leq x} \sup_{z : y \leq z}|\psi(w) - \psi(z)| = 0.$$
\end{proof}
 
\section{Ordered Metric Valued Fields}
Prior to this paper, there was one notable example in the continuous logic literature of a class of ordered structures: ordered metric valued fields, as introduced in \cite[Section 3]{mvf}. We will examine ordered metric valued fields, and in particular, their model completion, ordered real closed metric valued fields, or the theory \ORCMVF.
There are two ways to consider these as metric structures: the projective line, as in \cite{mvf}, or a valuation ring (for a valuation coarsening the metric), as in \cite{tilting}.

The projective line has the advantage of being axiomatizable without requiring a coarsening of the valuation - see \cite[Prop. 1.2]{mvf}. It will turn out to be best understood as a metric cyclic order, in which context it is o-minimal (Theorem \ref{thm:ORCMVF_reg}).

The valuation ring has the advantage of being simpler to define, and will turn out to be a metric linear order. Restricting the domain to the valuation ring also defines additional structure, as the valuation ring is not otherwise definable in the projective line. We combine this framework with that of metric linear orders to define a theory $\ORCMVR$ of ordered real closed metric valued fields, for which we prove quantifier elimination. Models of this theory turn out to be only \emph{weakly} $o$-minimal (Theorem \ref{thm:ORCMVR_omin}).

We observe that the two complete theories of projective lines of metric valued fields introduced in \cite{mvf} are tamer than the analogous theories in classical logic: $\Th{ACMVF}$ is stable, while $\Th{ACVF}$ is only NIP, and $\ORCMVF$ is $o$-minimal while $\mathsf{RCVF}$ is only weakly $o$-minimal.
The same does not occur for the valuation ring framework, where both theories reproduce the wilder classical behavior.

This phenomenon can be at least partially explained in terms of the Ax-Kochen-Ershov principle. Such a principle is elaborated for the projective line in \cite{ake}, where the value group is always a subgroup of the positive reals. To develop an analogous principle for the valuation ring framework will require studying the value groups as metric linear orders.

\subsection{Projective Line}

\begin{lem}\label{lem:rcmvf_c_pred}
    In a model of \RCMVF{},
    $$
    \inf_t\norm{
    (x - y)(y - z)(z - x)
    - (x^2 + 1)
    (y^2 + 1)
    (z^2 + 1)
    t^2
    } = 
    \begin{cases}
        0 & \ceq(x,y,z) \\
        d(x,y)d(x,z)d(y,z) & \textrm{otherwise}.
    \end{cases}$$
\end{lem}
\begin{proof}
    Firstly, we can expand this in homogeneous coordinates as
    \begin{align*}
        &\inf_{[t:t^*]}\abs{
    (xy^* - x^*y)(yz^* - y^*z)(zx^* - z^*x)(t^*)^2
    -(x^2+(x^*)^2)(y^2+(y^*)^2)(z^2+(z^*)^2)t^2
    }\\
    =& \inf_{[t:t^*]}\abs{
    P^h(x,x*,y,y^*,z,z^*)(t^*)^2
    -Q^h(x,x^*)Q^h(y,y^*)Q^h(z,z^*)t^2
    },
    \end{align*}
    where $P(x,y,z) = (x - y)(y - z)(z - x)$ and $Q(x) = x^2 + 1$.

    For any $[x : x^*]$, the binomial $Q^h(x,x^*)$ will always be positive,
    and $|Q^h(x,x^*)| = |x^2 + (x^*)^2| = \max(|x|,|x^*|)^2 = 1$.
    
    If $P^h(x,x*,y,y^*,z,z^*) \geq 0$, then it can be written as $u^2$,
    and $Q^h(x,x^*)Q^h(y,y^*)Q^h(z,z^*)$ can be written as $v^2$ where $|v| = 1$, so if we let $t = [u:v]$,
    we find that
    $$P^h(x,x*,y,y^*,z,z^*)(t^*)^2
    +Q^h(x,x^*)Q^h(y,y^*)Q^h(z,z^*)t^2
    = u^2v^2
    -v^2u^2 = 0.$$

    If $P^h(x,x*,y,y^*,z,z^*) < 0$, then for any $[t:t^*]$,
    \begin{align*}
        &\abs{
    P^h(x,x*,y,y^*,z,z^*)(t^*)^2
    -Q^h(x,x^*)Q^h(y,y^*)Q^h(z,z^*)t^2
    }\\
    =& \max\left(\abs{
    P^h(x,x*,y,y^*,z,z^*)}|t^*|^2,\abs{Q^h(x,x^*)}\abs{Q^h(y,y^*)}\abs{Q^h(z,z^*)}|t|^2
    \right),\\
    \end{align*}
    so as either $|t| = 1$ or $|t^*| = 1$, this quantity is minimized in $[t:t^*]$ when $t = 0$ and $|t| = 1$,
    with the value
    $$|P^h(x,x*,y,y^*,z,z^*)| =
    |xy^* - x^*y||yz^* - y^*z||zx^* - z^*x|
    = d(x,y)d(y,z)d(z,x).$$

    It now suffices to show that $\ceq([x:x^*],[y:y^*],[z:z^*])$ is equivalent to $P^h(x,x*,y,y^*,z,z^*) \geq 0$.
    If none of these points are infinite, we may assume that $x^*,y^*,z^*$ are all positive, and then we see that the sign of $P^h(x,x*,y,y^*,z,z^*)$ is the same as the sign of $P\left(\frac{x}{x^*},\frac{y}{y^*},\frac{z}{z^*}\right)$, and one can check by casework that this is nonnegative if and only if $\ceq\left(\frac{x}{x^*},\frac{y}{y^*},\frac{z}{z^*}\right)$ holds.

    If multiple points are infinite, or in general, if any two points are equal, then $\ceq([x:x^*],[y:y^*],[z:z^*])$ holds and $P^h(x,x*,y,y^*,z,z^*) = 0$.

    If only one point is infinite, we may assume that $[z:z^*] = [1,0]$ and $x^*,y^* > 0$.
    Then we find that
    $$P^h(x,x*,y,y^*,z,z^*) = (x^*y^*)^2\left(\frac{y}{y^*}-\frac{x}{x^*}\right),$$
    which is nonnegative if and only if
    $\frac{y}{y^*}\geq \frac{x}{x^*}$,
    which is precisely the condition under which
    $\ceq\left(\frac{x}{x^*},\frac{y}{y^*},\infty\right)$ holds.
\end{proof}

\begin{thm}
    In a model of \RCMVF{},
    the relation $\ceq(x,y,z)$ is a definable set.
\end{thm}
\begin{proof}
    The set $C$ is definable if there is some definable predicate $\phi(x,y,z)$ such that if $\ceq(x,y,z)$, then $\phi(x,y,z) = 0$, and otherwise, $d((x,y,z),C) \leq \phi(x,y,z)$.

    We will use
    $$\phi(x,y,z) = \sqrt[3]{\inf_t\norm{
    (x - y)(y - z)(z - x)
    - (x^2 + 1)
    (y^2 + 1)
    (z^2 + 1)
    t^2
    }}.$$
    
    By Lemma \ref{lem:rcmvf_c_pred},
    $$\phi(x,y,z) =
    \begin{cases}
        0 & \ceq(x,y,z) \\
        \sqrt[3]{d(x,y)d(x,z)d(y,z)} & \textrm{otherwise},
    \end{cases}$$
    and if $(x,y,z)\not\in C$, then 
    $$d((x,y,z),C) \leq \min(d(x,y),d(x,z),d(y,z)) \leq \sqrt[3]{d(x,y)d(x,z)d(y,z))} = \phi(x,y,z).$$
\end{proof}

This means that any model of \RCMVF{} (or, indeed, of $\ORCMVF$) can be expanded by definitions to an expansion of a metric cyclic order.
Because this metric cyclic order is definable from the metric valued field structure, we will just call that resulting structure the reduct.

\begin{lem}\label{lem:ORCMVF_UDCO}
    The reduct of any $M \vDash \RCMVF{}$ to $\Lceq$ models \UDCO{}.
\end{lem}
\begin{proof}
    We already know that the reduct structure is an ultrametric cyclic order. To show density, let $a, b \in M$ - we wish to show that the sets $\{d(a,c) : c \in \ceq(a,b,c) \}$ and $\{d(a,c) : c \in \ceq(a,c,b)\}$ are both dense in $[0,d(a,b)]$.

    Because $[a:a^*] \mapsto [a^*:a]$ is a cyclic-order-preserving isometry of $M$, we may assume that $|a| \leq 1$ and $a^* = 1$. 
    We start by showing the density of $\{d(a,c) : c \in \ceq(a,c,b)\}$.
    Now let $0 < s < t < d(a,b)$. By the density of the valuation in a real closed metric valued field, choose $q$ nonnegative such that $s < |q| < t$, and let $c = [a + q:1]$.
    We find that $\norm{c} = |a + q| \leq \max(|a|,|q|) \leq 1$, and $d(a,c) = |ac^* - ca^*| = |a - (a + q)| = |q|$, so $s < d(a,c) < t$.
    Because $q$ is nonnegative, $a \leq c$.
    To show that $\ceq(a,c,b)$, we will see that either $b \leq a$, $b = \infty$, or $c \leq b$. The set $S$ of elements $[\alpha : \alpha^*]$ with $|\alpha^*| = 1$ is order-convex, so if $b$ is outside this set, we may assume that either $b = \infty$, $b \leq S$, or $S \leq b$, and in all cases we are done.
    Thus we may assume that $b = [b : 1]$, and we may also assume that $a < b$. In this regime, we find that $d(a,b) = |b - a|$. As $|c - a| < |b - a|$ and both differences are positive, we find that $a < c < b$.

    Similarly, we could show the density of $\{d(a,c) : c \in \ceq(a,c,b)\}$ by setting $c = [a - q:1]$.
\end{proof}

\begin{thm}\label{thm:ORCMVF_reg}
    Every model of \ORCMVF{} is a cyclically o-minimal expansion of \UDCO{}.
\end{thm}
\begin{proof}
    Let $M \vDash \ORCMVF{}$, let $\phi(x;\bar y)$ be a definable predicate with $|x| = 1$, and let $\bar b \in M^{\bar y}$. We wish to show that $\phi(x;\bar b)$ is strongly regulated.

    By \cite[Theorem 3.11]{mvf}, \ORCMVF{} has quantifier elimination. Thus, as in the proof of \cite[Theorem 3.12]{mvf} we may assume $\phi(x;\bar b)$ is an atomic predicate - these take the form $\norm{P(x,\bar b)}$ or $\langle P(x,\bar b)\rangle$.
    Each of these equals, in turn, a continuous combination of formulas of the form $|x - c|$ or $\langle x - c\rangle$, so it suffices to show these predicates are cyclically strongly regulated. (Note that the exact continuous combination this equals and the new parameters $c$ may depend on $\bar b$). We will show that these are cyclically strongly regulated using Lemma \ref{lem:reg_partition}.
    
    Fix $0 \leq s < t \leq 1$.
    We  first claim that there is some $a \in M$ with $s < |a| < t$ - in general, the set of valuations of elements of $M$ is dense in $[0,1]$.
    To show this, we first note that by one of the axioms in \cite[Definition 3.5]{mvf}, for any $\delta > 0$, there is an element $h \in M$ with $\left||h| - \frac{1}{2}\right| < \delta$ - all we need is that $0 < |h| < 1$.
    There must exist some rational $\frac{p}{q}$ such that $s < |h|^{\frac{p}{q}} < t$. We may assume $q$ is odd.
    As $M$ is a real closed valued field, there is some $a \in M$ with $a^q = h^p$, so we see that $|a| = |h|^{\frac{p}{q}}$, and $s < |a| < t$ as desired.

    Now consider $|x - c|$. We may partition $M$ into three cyclic intervals: $(\infty,c - a),[c - a,c + a],(c + a, \infty]$. Then on the first and last piece of this partition, we have $|x - c| \geq |a| > s$, while on the middle part, we have $|x - c| \leq |a| < t$.

    Meanwhile, we can understand $\langle x - c\rangle$ best as a piecewise function.
    The immediate definition is that
    $$\langle x - c\rangle 
    = \begin{cases}
        0 & x - c \geq 0\\
        \norm{x -c} \wedge \norm{(x - c)^*} & x - c < 0,
    \end{cases}$$
    but this amounts to 
    $$\langle x - c\rangle 
    = \begin{cases}
        0 & x - c \geq 0\\
        |x - c| & -1 \leq x - c < 0\\
        |(x - c)|^{-1} & \infty < x - c < 1.
    \end{cases}$$
    As all three pieces are strongly regulated functions, and the pieces of the definitions are intervals, the piecewise definition is as cyclically strongly regulated as well.
\end{proof}

\subsection{Valuation Ring}

Now we compare with the formalism for metric valued fields introduced in \cite{tilting}.
In this context, the metric structure is just the valuation ring, so we will refer to such structures as \emph{metric valuation rings}.

\begin{defn}
    Let $\Lvr$ be the metric language $\{0,1,+,-,\cdot, D\}$ of valued rings, and let $\Lovr = \Lring \cup \Lleq$.

    We use $|x|$ as notation in this language for $d(x,0)$.

    Let $\MVR$ be the theory of metric valuation rings, referred to in \cite[Lemma 3.3]{tilting} as $\MVF$,
    together with the axiom $D(x,y) = \inf_z|y - xz|$.
\end{defn}

\begin{fact}[{\cite[Lemma 3.3]{tilting}}]
    The models of $\MVR$ are precisely of the form $(\mathcal{O}(K),|x - y|,0,1,+,-,\cdot,D)$ where $\mathcal{O}(K)$ is the valuation ring of a valued field $(K,v)$, and $|\cdot|$ is an absolute value corresponding to a valuation coarsening $v$, and $D(x,y) = \inf_{z \in \mathcal{O}(K)}|y - xz|$.
\end{fact}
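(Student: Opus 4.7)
The plan is to observe that $\MVR$ is, by its definition, the theory $\MVF$ of \cite{tilting} augmented with a single axiom that pins down the interpretation of $D$. So the proof should reduce to the cited result, together with a routine verification that the added axiom produces a legitimate metric-structure predicate and that it captures exactly the divisibility predicate on valuation rings.

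First, I would drop $D$ from the signature and apply \cite[Lemma 3.3]{tilting} to the $\MVF$-reduct. This gives the hard content: a structure $(M,d,0,1,+,-,\cdot)$ satisfies $\MVF$ if and only if it is of the form $(\mathcal{O}(K),|x-y|,0,1,+,-,\cdot)$ for some valued field $(K,v)$ with an absolute value $|\cdot|$ coming from a valuation coarsening $v$. At this point the ring operations, the ultrametric, the residue-field content, and the relationship of $|\cdot|$ to the valuation are all accounted for.

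Second, for the forward direction, given $M \vDash \MVR$, the cited fact identifies the $\MVF$-reduct with $(\mathcal{O}(K),|x-y|,0,1,+,-,\cdot)$ for some such $(K,v)$, and the added axiom $\sup_{x,y}|D(x,y) - \inf_z|y-xz|| = 0$ forces $D^M(x,y) = \inf_{z \in \mathcal{O}(K)}|y-xz|$ on the nose. For the reverse direction, given $(\mathcal{O}(K),|x-y|,0,1,+,-,\cdot)$ as in the statement, I set $D(x,y) := \inf_{z \in \mathcal{O}(K)}|y-xz|$ and verify that the resulting $\Lvr$-structure satisfies $\MVR$: the $\MVF$-axioms hold by the cited result, and the $D$-axiom holds by definition.

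The one thing that requires checking, rather than citing, is that $D$ as defined is a legitimate predicate symbol of the metric language, i.e.\ a uniformly continuous function $\mathcal{O}(K)^2 \to [0,1]$ with the modulus prescribed in $\Lvr$. Boundedness by $1$ follows by taking $z=0$ and using that $|y| \leq 1$ on the valuation ring. Uniform continuity in $y$ is $1$-Lipschitz (by the reverse triangle inequality). Uniform continuity in $x$ follows from $||y - x_1 z| - |y - x_2 z|| \leq |x_1 - x_2|\cdot |z| \leq |x_1-x_2|$, using $|z|\leq 1$ on $\mathcal{O}(K)$. I do not expect any real obstacle here: essentially all substantive work is in the cited lemma, and the remainder is a bookkeeping check that the definitional expansion by $D$ is well-behaved.
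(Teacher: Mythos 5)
Your proposal is correct and matches the approach the paper takes: the statement is presented as a \textbf{Fact} precisely because the substantive content is \cite[Lemma 3.3]{tilting} applied to the $\MVF$-reduct, while the remaining axiom $D(x,y)=\inf_z|y-xz|$ is a definitional expansion whose well-definedness (boundedness via $z=0$ and $|y|\leq 1$, $1$-Lipschitzness in each argument via the non-archimedean inequality $\bigl||a|-|b|\bigr|\leq|a-b|$ together with $|z|\leq 1$) is routine, exactly as you verify.
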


First, let us make some observations about $D(x,y)$ and the value group $v\mathcal{O}$. It is mentioned in \cite{tilting} that $|x - y\mathcal{O}(K)^\times| = \max(D(x,y),D(y,x))$ is the pseudometric on the imaginary corresponding to the value group $v\mathcal{O}$. Here, we observe that $D(x,y)$ makes this imaginary a metric linear order.
\begin{lem}\label{lem:D_pred}
    For all $x,y \in \mathcal{O}(K)$, $$D(x,y) = \begin{cases}
        0 & \textrm{if }v(x) \leq v(y) \\
        |y| & \textrm{otherwise}
    \end{cases}$$
\end{lem}
\begin{proof}
    If $v(x) \leq v(y)$, then $x$ divides $y$, so $D(x,y) = 0$.
    If $v(x) > v(y)$, then for every $z \in \mathcal{O}(K)$, 
    $v(xz) \geq v(x)$, and thus $v(y - xz) = v(y)$, so also, $|y - xz| = |y|$.
    Thus $D(x,y) = \inf_z|y - xz| = |y|$.
\end{proof}
\begin{thm}\label{thm:value_ulo}
    The imaginary $v\mathcal{O}$ is a metric linear order, and for all $x,y$, $r(v(x),v(y)) = D(x,y)$, so this metric linear order structure is interpretable. In particular, $v\mathcal{O} \vDash \ULO$.
\end{thm}
\begin{proof}
    First, observe that if $v(w) = v(x), v(y) = v(z)$, then $D(x,y) = D(w,z)$, so this definition of $r$ is a definable predicate on this imaginary.

    We know that $v\mathcal{O}$ is linearly ordered, and that $D(x,y) = 0$ for $v(x) \leq v(y)$.
    Meanwhile, when $v(x) > v(y)$, we see that $d(v(x),v(y)) = \max(D(x,y),D(y,x)) = D(x,y)$, so $D(x,y)$ has the correct piecewise definition to be the predicate $r$. It now suffices to show that for all $a \in \mathcal{O}$, $r(v(x),v(a)) = D(x,a)$ is nondecreasing in $v(x)$, which is true because it equals $0$ for $v(x) \leq v(a)$, and $|a|$ when $v(x) > v(a)$.

    Lastly, we note that this metric is an ultrametric, as given $x,y,z$ with $v(x) < v(y) < v(z)$, we have
    $$d(v(x),v(y)) = |y| \leq |z| = d(v(x),v(z)) = d(v(y),v(z)).$$
\end{proof}

We now recall some facts about formally real and ordered valued fields before defining formally real and ordered metric valuation rings.
\begin{fact}[{\cite[Lemma 3.2]{mvf}}]\label{fact:frspec}
    A valued field is formally real if and only if it satisfies
    $$\forall \bar x, v\left(\sum_i x_i^2\right) = \min_i v\left(x_i^2\right).$$

    A valued field equipped with an ordering is an ordered valued field if and only if for all $x,y \geq 0$, $v(x + y) = \min(v(x),v(y)).$
\end{fact}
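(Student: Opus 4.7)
The plan is to treat the two claims separately, since they share the flavor of ``absolute-value / valuation respects positivity'' but use different algebraic inputs.

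For the first biconditional, the easier direction is from the valuation identity to formal reality: if $v\bigl(\sum_i x_i^2\bigr) = \min_i v(x_i^2)$ for every tuple, then $\sum_i x_i^2 = 0$ forces $\min_i v(x_i^2) = v(0) = \infty$, so each $x_i = 0$; hence $-1$ cannot be written as a nontrivial sum of squares. For the converse, I would unpack ``formally real valued field'' as including formal reality of the residue field (the only way the equivalence can hold without trivial counterexamples like $\mathbb{Q}$ with a $p$-adic valuation where $1^2+2^2 = 5$). Given this, one direction of the ultrametric inequality, $v\bigl(\sum_i x_i^2\bigr) \geq \min_i v(x_i^2)$, is automatic. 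Suppose the inequality is strict and let $x_1$ realize the minimum valuation. Dividing through by $x_1^2$ gives elements $y_i = x_i/x_1 \in \mathcal{O}$ with $y_1 = 1$ and $v\bigl(1 + \sum_{i>1} y_i^2\bigr) > 0$. Reducing modulo the maximal ideal, this says $1 + \sum_{i>1} \bar y_i^2 = 0$ in the residue field, exhibiting $-1$ as a sum of squares there, a contradiction.

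For the second biconditional, the forward direction assumes compatibility of the ordering with the valuation, which means exactly that the valuation ring $\mathcal{O}$ is order-convex. Given $x, y \geq 0$, without loss of generality $v(x) \leq v(y)$, so $y/x \in \mathcal{O}$ and $1 + y/x \geq 1 > 0$. Then $0 < 1/(1+y/x) \leq 1 \in \mathcal{O}$, and convexity of $\mathcal{O}$ gives $1/(1+y/x) \in \mathcal{O}$; hence $1 + y/x$ is a unit in $\mathcal{O}$, so $v(x+y) = v(x) = \min(v(x),v(y))$. For the converse, assume the additive property on non-negatives and take $0 \leq a \leq b$ with $b \in \mathcal{O}$. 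Writing $b = a + (b-a)$ as a sum of non-negatives gives $v(b) = \min(v(a), v(b-a))$, so $v(a) \geq v(b) \geq 0$, i.e. $a \in \mathcal{O}$; hence $\mathcal{O}$ is convex and the ordering is compatible with the valuation.

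The main obstacle is really just pinning down the intended definition of ``formally real valued field'' in Part 1, since formal reality of $K$ alone is insufficient to force the valuation identity. Once one adopts (or derives, via Baer--Krull) the equivalent formulation that the residue field is formally real, both directions reduce to the lift-and-reduce argument above. Part 2 is essentially a direct unpacking of convexity of $\mathcal{O}$ combined with the ultrametric inequality, so no new obstacles arise there. Since the statement is cited as a fact from \cite{mvf}, I would also remark that no new model-theoretic machinery from the current paper is needed; these are purely algebraic properties of $(K, v, \leq)$.
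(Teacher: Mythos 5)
There is nothing in the paper to compare your argument against: this statement is imported verbatim as a \emph{fact}, citing Lemma 3.2 of the metric valued fields paper, and no proof is given here. Judged on its own, your argument is the standard one and is essentially correct. For the second biconditional, both directions are fine: convexity of $\mathcal{O}$ makes $1+y/x$ a unit (after discarding the trivial case $x=0$), and conversely the decomposition $b=a+(b-a)$ gives convexity of $\mathcal{O}$ on the nonnegative side, with the negative side following by symmetry of $v$ under negation. For the first biconditional, your lift-and-reduce argument (divide by the term of minimal valuation, pass to the residue field) is exactly the right mechanism.

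Your caveat about the meaning of ``formally real valued field'' is the one substantive point, and you are right to flag it: with the bare field-theoretic reading of ``formally real,'' the first equivalence is false, as your example of $\mathbb{Q}$ with the $5$-adic valuation shows ($1^2+2^2=5$ has positive valuation). The identity $v\left(\sum_i x_i^2\right)=\min_i v\left(x_i^2\right)$ is equivalent to formal reality of the \emph{residue} field, which by Baer--Krull is in turn equivalent to the existence of an ordering of $K$ compatible with $v$; the cited source must be using one of these as its definition of a formally real valued field, and the present paper's subsequent use of the fact (to axiomatize $\Th{FRMVR}$) is consistent with that reading. So your resolution is the correct one, but in a referee's report I would ask the authors to make the intended definition explicit, since the statement as transcribed is not literally true under the most common convention.
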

It is straightforward to see that if a field is equipped with two valuations, one coarsening the other, and is formally real or ordered with respect to the finer one, it is also formally real or ordered with respect to the coarser one.

\begin{lem}
    If $M \vDash \MVR$, then the fraction field of $M$ is formally real if and only if for every $n$, $M$ satisfies the following conditions:
    \begin{align*}
        &\sup_{\bar x} \min_i D\left(\sum_{i = 1}^n x_i^2,x_i^2\right) = 0\\
        &\sup_{\bar x} D\left(x_i^2,\sum_{i = 1}^n x_i^2\right) = 0,
    \end{align*}
    and if $M$ is an $\Lovr$-structure also satisfying $\MLO$, then the ordering on $M$ extends to a valued field ordering on the fraction field of $M$ if and only if $M$ satisfies the conditions
    \begin{align*}
        &\sup_{x,y \geq 0} \min\left(D(x + y,x),D(x + y,y)\right) = 0\\
        &\sup_{x,y \geq 0} D(x,x + y) = 0.
    \end{align*}

    We define the $\Lvr$-theory $\Th{FRMVR}$ of formally real metric valued fields and the $\Lovr$-theory $\Th{OMVR}$ of ordered metric valued fields accordingly.
\end{lem}
\begin{proof}
    It is easy to see by scaling that if the fraction field of $M$ is not formally real or not validly ordered, the witnesses in Lemma \ref{fact:frspec} can be chosen from the valuation ring.

    The axioms then simply ensure that the valuations of the appropriate sums are the minima of the valuations of the elements they are summing, and these properties extend to the coarser absolute value valuation as well.
\end{proof}

\begin{fact}[{\cite[Theorem 1]{rcvr}}]\label{fact:rcvr_spec}
    An ordered ring $M$ is the valuation ring of an ordered real closed valued field if and only if the following hold:
    \begin{itemize}
        \item For all $0 \leq a \leq b$ in $M$, $b$ divides $a$
        \item Every nonnegative element of $M$ is a square.
        \item Every monic polynomial of odd degree in $M$ has a root in $M$.
    \end{itemize}
\end{fact}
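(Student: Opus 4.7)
The plan is to prove both directions of the equivalence. For the forward direction, suppose $M = \mathcal{O}(K)$ for some ordered real closed valued field $(K,v)$. Property (1) follows from convexity of $M$ in $K$: if $0 \leq a \leq b$ in $M$ with $b \neq 0$, then $a/b \in K$ satisfies $0 \leq a/b \leq 1$, hence $a/b \in M$, so $b \mid a$. For (2), any nonnegative $a \in M$ has a square root $c \in K$ by real closedness, and $v(a) = 2v(c) \geq 0$ forces $v(c) \geq 0$, so $c \in M$. For (3), if $p(y) = y^n + \sum_{i<n} a_i y^i \in M[y]$ is monic of odd degree and $c \in K$ is a root, suppose for contradiction $v(c) < 0$; then for each $i < n$, $v(a_i c^i) \geq i \cdot v(c) > n \cdot v(c) = v(c^n)$, so the leading term dominates and $v(p(c)) = v(c^n)$ is finite, contradicting $p(c) = 0$.

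The backward direction requires constructing the valued field structure on the fraction field $K$ of $M$, which inherits an ordering from $M$. First I would verify that $M$ is a valuation ring of $K$: any nonzero $x \in K$ can be written as $x = a/b$ with $a,b \in M$; adjusting signs we may take $a,b \geq 0$, and then property (1) gives $b \mid a$ (so $x \in M$) or $a \mid b$ (so $x^{-1} \in M$). Next I would show $M$ is convex in $K$: if $0 \leq x \leq y$ with $y \in M$, writing $x = a/b$ with $a,b \geq 0$ in $M$ gives $0 \leq a \leq by$ in $M$, so $by \mid a$ by (1), whence $x \in M$. Convexity ensures the natural valuation on $K$ with valuation ring $M$ is compatible with the ordering.

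To establish that $K$ is real closed, I would apply the Artin--Schreier criterion: it suffices to show every positive element of $K$ is a square and every monic polynomial of odd degree over $K$ has a root in $K$. For squares, if $x > 0$ in $K$, write $x = a/b$ with $b > 0$ (and hence $a > 0$), so $x = ab/b^2$; property (2) gives $ab = c^2$ for some $c \in M$, so $x = (c/b)^2$. For odd-degree polynomials, given $p(y) = y^n + \sum_{i<n} c_i y^i \in K[y]$ with $n$ odd, choose a common denominator $B \in M$ for the $c_i$ (so that $B^{n-i}$ is divisible by the denominator of $c_i$, ensuring $c_i B^{n-i} \in M$), and substitute $y = z/B$; multiplying by $B^n$ yields a monic polynomial $q(z) = z^n + \sum_{i<n} c_i B^{n-i} z^i \in M[z]$ of odd degree $n$, which has a root $z_0 \in M$ by property (3), and then $z_0/B \in K$ is a root of $p$.

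The main obstacle is the backward direction's requirement that the algebraic properties (1)--(3) simultaneously yield a valuation ring structure, convexity with respect to the ordering, and real closedness of the fraction field. The Artin--Schreier reduction of odd-degree polynomials to those with coefficients in $M$ is routine once the common denominator is chosen carefully, but the ordered structure of $M$ plays no role in this algebraic reduction: all the ordering-dependent work is concentrated in verifying convexity of $M$ in $K$ and in the square-root step, both of which become clean given properties (1) and (2).
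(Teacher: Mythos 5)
The paper states this result as a Fact, citing it to Theorem~1 of the external reference without providing a proof, so there is no internal argument to compare your proposal against. Your argument appears correct. The forward direction follows standard lines: convexity of the valuation ring of an ordered valued field gives the divisibility property, real closedness of the fraction field provides square roots (with $v(c) \geq 0$ forced by $2v(c) = v(a) \geq 0$), and the usual integral-closure argument (the leading term of a monic polynomial strictly dominates in valuation when $v(c) < 0$) puts roots of odd-degree monic polynomials inside $M$. The backward direction correctly reduces to three checks: that $M$ is a valuation ring of its fraction field $K$, that $M$ is convex in $K$ under the induced ordering, and that $K$ satisfies the Artin--Schreier criterion for real closedness (positives are squares, odd-degree polynomials have roots). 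Your common-denominator substitution $y = z/B$ does keep each coefficient $c_i B^{n-i} = (Bc_i)B^{n-i-1}$ in $M$. One point you gloss over but should state explicitly: a totally ordered ring is an integral domain, so the fraction field exists and carries the standard extension ordering ($a/b > 0$ iff $ab > 0$), which makes $K$ formally real and completes the hypotheses of Artin--Schreier.
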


The first of these properties is already covered by the axioms of an ordered valuation ring, allowing us to axiomatize ordered real closed metric valuation rings accordingly:
\begin{lem}\label{lem:orcmvr_spec}
    The fraction field of a model $M \vDash \OMVR$ is a real closed valued field with respect to both valuations if and only if $M$ satisfies the following additional conditions, with the latter holding for each $n \in \N$:
    \begin{align*}
        &\sup_{x \geq 0}\inf_y d(x,y^2) = 0\\
        &\sup_{x_1,\dots,x_{2n}}\inf_y \left|y^{2n+1} + \sum_{i = 0}^{2n} x_iy^i\right| = 0.\\
    \end{align*}
\end{lem}
\begin{proof}
    This follows by Fact \ref{fact:rcvr_spec}.
\end{proof}
\begin{defn}
    Let $\ORCMVR$ be the theory consisting of $\OMVR$, the additional conditions of Lemma \ref{lem:orcmvr_spec}, and the axiom 
    $$\inf_x\abs{|x| - \frac{1}{2}} = 0$$
    implying that $|\cdot|$, and therefore $v$, are nontrivial on $M$.
\end{defn}

\begin{lem}
    Any model of $\ORCMVR$ is also a model of $\UDLO$.
\end{lem}
\begin{proof}
    Such a model is clearly an ultrametric linear order, so it suffices to prove density.

    We observe that as the absolute value on $M$ is nontrivial, there is some $a \in M$ with $0 <|a| <1$, and optionally switching $a$ with $-a$, we assume $0 \leq a$.
    Thus by exponentiating and taking roots of $a$, we may find positive elements of valuations $|a|^{q}$ for all $q \in \Q \cap (0,\infty)$, which are dense in $(0,1)$.

    Thus for any $b \in M$, and any $0 \leq s < t \leq 1$, we may find an element $c = a^q$ such that $s < |c|< t$, and see that $s < d(b,b + c) < t$, and $b < b + c$. Similarly, $s < d(b,b - c) < t$, while $b - c < b$.
\end{proof}

Before proving quantifier elimination for the theory $\ORCMVR$, we characterize embeddings of (ordered) metric valued rings, in terms of embeddings with respect to the discrete language $\Lrm{div}$ of rings with the divisibility symbol $x|y$, interpreted as $v(y) \geq v(x)$.
\begin{lem}
    Embeddings of metric valuation rings are exactly embeddings in the discrete language $\mathcal{L}_{\mathrm{div}}$ that also preserve $|\cdot|$.

    If $f : M \to N$ is an $\mathcal{L}_{\mathrm{div}}$-embedding, and there is $m \in M$ such that $0 < |m| < 1$ and $|f(m)| = |m|$, then $f$ is an embedding of metric valuation rings.

    Additionally, embeddings of ordered metric valuation rings are just order-preserving embeddings of metric valuation rings.
\end{lem}
\begin{proof}
    An embedding of metric valuation rings is definitionally a map preserving the ring structure, the metric, and the additional symbol $D$. Preserving the metric is equivalent to preserving $|\cdot|$, and preserving $D$ is equivalent to preserving both $|\cdot|$ and divisibility by Lemma \ref{lem:D_pred}.

    Let $f : M \to N$ be an $\mathcal{L}_{\mathrm{div}}$-embedding, and let $m \in M$ be such that $0 < |m| < 1$ and $|f(m)| = |m|$. Because this embedding preserves divisibility, we see that for all $a,b \in M$, if $|a| < |b|$, then $b | a$, so $f(b) | f(a)$, and thus $|f(b)| \leq |f(a)|$.
    
    Now fix $a \in M$, and we will show that $|f(a)| = |a|$. If $|a| = 0$, then $|f(a)| = |f(0)| = 0$. If $|a| = 1$, then for all $k \in \N$, $|m| < |a^k|$, so $|f(m)| \leq |f(a)|^k$. This can only happen if $|f(a)| = 1$.
    Otherwise, fix $0 < s < |a| < t < 1$, and we will show that $s < |f(a)| < t$. There are then positive rationals $p,q \in \Q \cap (0,\infty)$ with $s < |m|^p < |a| < |m|^q < t$. Let $i \in \N$ be such that $i > 0$ and $pi,qi$ are integers. Then 
    $|m|^{pi} < |a|^i < |m|^{qi}$, from which we can deduce that $s < |f(m)|^{pi} \leq |f(a)|^i \leq |f(m)|^{qi} < t$. 

    The last claim is true as the only additional symbol is the metric linear order symbol $r$, which is preserved as order and metric are.
\end{proof}

\begin{lem}
    The theory $\ORCMVR$ admits quantifier elimination.
\end{lem}
\begin{proof}
    To do this, assume that $M, N \vDash \ORCMVR$, that $A$ is a substructure of $M$ with $f : A \hookrightarrow N$, and that $N$ is $|M|^+$-saturated.
    We will show that $f$ extends to an embedding of $M$ into $N$, analogously to the proof of \cite[Prop. 3.7]{tilting} for the algebraically closed case.

    We start by extending $A$ and $f$ to an embedding from a substructure of $M$ on which $|\cdot|$ is not trivial.
    Suppose $c \in M$ satisfies $0 < |c| < 1$, and $0 < c$. Then to correctly extend the embedding to $c$, and the substructure generated by $A$ and $c$, it suffices to map $c$ to some $d \in N$ where $|d| = |c|$ and $0 < d$. This is possible by saturation of $N$.
    By performing this extension first, we may assume $|\cdot|$ is not trivial on $A$, and $f$ preserves $|\cdot|$, so any order-preserving $\mathcal{L}_{\mathrm{div}}$-embedding $M \to N$ extending $f$ will automatically be an embedding of ordered metric valuation rings.

    We now define $N_1 \supseteq N$, a larger, $|M|^+$-saturated model of the discrete language of ordered real closed valued fields. To do this, if $K_N$ is the fraction field of $N$, let $N_1$ be the valuation ring of the Hahn series field $K_N((\Gamma))$, where $\Gamma$ is an $|M|^+$-saturated divisible ordered abelian group, taking the natural valuation in $v(N) \times \Gamma$. The Hahn series field $N_1$ is saturated, as it is maximally complete, the value group is saturated, and the residue field is saturated.

    By \cite{rcvr}, the theory of real closed valuation rings (valuation rings of real closed valued fields) in the (discrete) language of ordered valued rings admits quantifier elimination. Thus $f$ extends to an embedding $g : M \to N_1$ of real closed valued rings.
    By composing this with the quotient map $N_1 \to N$, quotienting out by the ideal $I = \{x : v(x) > v(N)\}$, we get a map $h : M \to N$.
    To show this is an embedding, it suffices to show that $g(M) \cap I = \{0\}$. Suppose for contradiction that $m \in M$ is nonzero with $m \in I$. Choose $n \in N, a \in A$ with $0 < |n| < |a| < |m|$.
    Then $|n| < |g(a)|$, and $v(g(m)) < v(g(a))$, so $v(g(m)) < v(n)$, contradicting the assumption that $m \in I$.
\end{proof}

\begin{thm}\label{thm:ORCMVR_omin}
    Every model of $\ORCMVR$ is weakly o-minimal, but not o-minimal.
\end{thm}
\begin{proof}
    To show $M \vDash \ORCMVR$ is not o-minimal, let $a \in M$ be such that $a > 0$ and $0 < |a| < 1$. Then the ideal $aM$ is definable by the predicate $d(x,aM) = \inf_y d(x,ay)$. We will show that $d(x,aM)$ is not strongly regulated.

    The ideal $aM$ consists of exactly those elements $x$ of $M$ with $v(x) \geq v(a)$.
    Thus for any $b \in M \setminus aM, c \in aM$, we have that $b - c \not \in aM$, so $v(b - c) < v(a)$ and thus $d(b,c) = |b - c| \geq |a|$, so $d(x,aM) >0$ implies $d(x,aM) \geq |a|$.
    If $d(x,aM)$ is strongly regulated, $M$ can be partitioned into finitely many intervals on which either $d(x,aM) > 0$ or $d(x,aM) < |a|$, but we see that the latter condition implies $x \in aM$, so $aM$ must itself be a finite union of intervals.
    
    The ideal $aM$ is order-convex and bounded above, so it suffices to show that it is not an interval. Assume for contradiction that $b$ is a least upper bound.
    If $b \in aM$, we can contradict $b$ being an upper bound by finding that $b < b + b \in aM$.
    If $b \not \in aM$, then as before, we see that $v(b - a) < v(a)$, so $b - a \not \in aM$. However, $0 < b - a < b$, so by order-convexity of $aM$, we can conclude that $b - a$ is a smaller upper bound for $aM$.

    To show $M \vDash \ORCMVR$ is weakly o-minimal, it suffices to show that if $\phi(x)$ is an atomic predicate with parameters in $M$ with $|x| = 1$, then $\phi(x)$ is weakly regulated.
    All atomic $\Lleq$-predicates will be strongly regulated, as $\ORCMVR$ implies $\UDLO$, leaving us to check $D(p(x,\bar a),q(x,\bar a))$ for polynomials $p,q$ and parameters $\bar a$.

    For a given value of $x$, by Lemma \ref{lem:D_pred} either $v(p(x,\bar a)) \leq v(q(x,\bar a))$, in which case $D(p(x,\bar a),q(x,\bar a)) = 0$, or $D(p(x,\bar a),q(x,\bar a)) = |q(x,\bar a)|$. We will show that the former condition holds on a finite union of order-convex sets. This means that $M$ can be partitioned into a finite union of order-convex sets on which this predicate either equals 0, or the strongly regulated function $|q(x,\bar a)|$. The latter order-convex sets can then be further partitioned into a finite union of order-convex sets on which that function is approximately constant, showing that $D(p(x,\bar a),q(x,\bar a))$ is weakly regulated.

    Note that any boolean combination of order-convex sets is a finite union of order-convex sets. Thus if we write $v(p(x,\bar a)) = \min_{i \leq m} f_i(x)$, $v(q(x,\bar a)) = \min_{j \leq n} g_j(x)$, then it suffices to show that the set $\{x \in M : f_i(x)\leq g_j(x)\}$ is itself a boolean combination of order-convex sets.

    We can factor $p(x, \bar a)$ into polynomials of the forms $c$, $x - c$, and $(x - c)^2 + b$, where $c$ is any constant and $b$ is a positive constant.
    As $v((x - c)^2 + b) = \min(2v(x - c),v(b))$,
    we find that $v(p(x,\bar a))$ is a sum of valuations of the forms $v(c), v(x - c), \min(2v(x - c),v(b))$, and is thus the minimum of a finite collection of sums of valuations of the form $v(c)$ and $v(x - c)$. Thus we may assume $v(p(x, \bar a))$ and $v(q(x, \bar a))$ are each such a sum.

    Write $v(p(x, \bar a)) = v(b_0) + \sum_{i = 1}^m v(x - b_i)$, $v(q(x, \bar a)) = v(c_0) + \sum_{j = 1}^n v(x - c_j)$.

    Let $C = \{b_i: 1 \leq i \leq m\} \cup \{c_j : 1 \leq j \leq n\}$.

    For each $c^* \in C$, on the order-convex set where $v(x - c^*) \geq v(x - c)$ for all other $c \in C$, we see that $v(x - c) = \min(v(x - c^*),v(c - c^*))$.
    Thus $M$ can be partitioned into a finite collection of order-convex sets on which $v(p(x, \bar a)), v(q(x,\bar a))$ are each the minimum of a finite collection of valuations of the form $kv(x - c^*) + v(c)$ for some $1 \leq i_0 \leq n$, $k \in \N$, $c \in M$. Thus we may now reduce to the case where each of $v(p(x,\bar a)),v(q(x,\bar a))$ is of this form, with $v(p(x,\bar a)) = jv(x - c^*) + v(b)$, $v(q(x,\bar a)) = kv(x - c^*) + v(c)$.
    
    In this case, $v(p(x, \bar a)) \leq v(q(x, \bar a))$ precisely when $(k - j)v(x - c^*) \leq v(b) - v(c)$. If $j = k$, this does not depend on $x$, and otherwise, this occurs when $v(x - c^*) \leq \frac{v(b) - v(c)}{k - j}$, which is the complement of an order-convex set containing $c^*$.
\end{proof}

\bibliographystyle{plainurl}
\bibliography{ref.bib}

\end{document}